\numberwithin{equation}{section}
\newtheorem{os}{Remark}[section]
\newtheorem{te}{Theorem}[section]
\title{Vibrations and fractional vibrations of rods, plates and Fresnel pseudo-processes}
\author{ORSINGHER Enzo\footnote{Corresponding author} \hspace{1cm} D'OVIDIO Mirko\\ {\small Dipartimento di Scienze Statistiche, ''Sapienza'' University of Rome}\\ {\small enzo.orsingher@uniroma1.it  \hspace{.5cm}  mirko.dovidio@uniroma1.it}}
\begin{document}
\maketitle

\begin{center}
\begin{minipage}{5in}
\textbf{Abstract} Different initial and boundary value problems for the equation of vibrations of rods (also called Fresnel equation) are solved by exploiting the connection with Brownian motion and the heat equation. The analysis of the fractional version (of order $\nu$) of the Fresnel equation is also performed and, in detail, some specific cases, like $\nu=1/2$, $1/3$, $2/3$, are analyzed. By means of the fundamental solution of the Fresnel equation, a pseudo-process $F(t)$, $t>0$ with real sign-varying density is constructed and some of its properties examined. The equation of vibrations of plates is considered and the case of circular vibrating disks $C_R$ is investigated by applying the methods of planar orthogonally reflecting Brownian motion within $C_R$. The composition of F with reflecting Brownian motion $B$ yields the law of biquadratic heat equation while the composition of $F$ with the first passage time $T_t$ of $B$ produces a genuine probability law strictly connected with the Cauchy process.
\end{minipage}
\end{center}

\textbf{Keywords}: Schr\"odinger equation, higher-order heat equations, Brownian motion, Mittag-Leffler function, Fresnel function, Wright function, inversion radius, elastic Brownian motions, fractional diffusions, vibrations of plates.

\tableofcontents

\section{Introduction}
One of the most important equations of mathematical physics is that of the vibrations of rods (or equivalently of beams and shafts) which can be written as
\begin{equation} 
\frac{\partial^2 u}{\partial t^2} = - \kappa^2 \frac{\partial^4 u}{\partial x^4}, \quad x \in \mathbb{R},\, t>0. \label{eq0}
\end{equation}
The constant $\kappa$ appearing in \eqref{eq0} is related to the physical structure of the vibrating rod and will be considered below as equal to $\kappa=1/2$ for reasons which will appear clearly further in the text. For the derivation of \eqref{eq0} consult \cite[pages 112 - 114]{due} or the classical book by Courant and Hilbert \cite[pages 244 - 246]{CouHilb} where, also the derivation of the equation of the vibrations of plates (pages 250 - 252)
\begin{equation}
\frac{\partial^2 u}{\partial t^2} = - \rho \left[ \frac{\partial^2}{\partial x^2} + \frac{\partial^2}{\partial y^2} \right]^2 u 
\end{equation}
is presented ($\rho$ is a physical constant assumed below equal to $\rho = 1/2^2$). The equation \eqref{eq0} can be written as
\begin{equation}
\left( \frac{\partial}{\partial t} + \frac{i}{2} \frac{\partial^2}{\partial x^2} \right) \left( \frac{\partial}{\partial t} - \frac{i}{2} \frac{\partial^2}{\partial x^2} \right) u = 0
\end{equation}
and this shows a strict connection with the Schr\"odinger equation and therefore with the heat equation. In effect
\begin{equation}
\frac{\partial u}{\partial t} = \pm \frac{i}{2} \frac{\partial^2 u}{\partial x^2}
\end{equation}
can be easily reduced by means of a time change $t^\prime = \pm i t$ to the classical homogeneous heat equation. This connection entails that initial-boundary value problems concerning equation \eqref{eq0} have solutions which can be constructed by means of related solutions of problems concerning the heat equation and therefore are related to the standard Brownian motion $B$. In particular, the fundamental solution of \eqref{eq0} can be obtained by means of the rule
\begin{align}
u(x, t)dx = & \frac{1}{2} \left[ Pr \{ B(s) \in dx \} \Big|_{s=it} + Pr\{ B(s) \in dx \} \Big|_{s = -it} \right] =  \frac{1}{2} \left[ \mu\{dx, t \} + \mu\{dx, -it \} \right]\nonumber \\
= &  \frac{dx}{2} \left[ \frac{e^{- \frac{x^2}{2it} - i \frac{\pi}{4}}}{\sqrt{2\pi t}} + \frac{e^{-\frac{x^2}{-2it} + i \frac{\pi}{4}} }{\sqrt{2\pi t}} \right] =  \frac{dx}{\sqrt{2\pi t}} \cos\left( \frac{x^2}{2t} - \frac{\pi}{4} \right). \label{funQW}
\end{align}
The measure $\mu$ appearing in \eqref{funQW} must be understood in the sense that 
\[ \mu\{ dx, \pm it \} = Pr\{ B(s) \in dx\} \Big|_{s= \pm i t}. \] 
The idea underlying \eqref{funQW} is that at time $t=0$ a Brownian motion is started off from $x=0$ either with increasing or decreasing imaginary time. The choice of time direction is made once only and cannot be changed. This process can be regarded as the limit of a random walk with symmetric real-valued steps separated by imaginary time intervals. Each term in \eqref{funQW} is therefore connected with a sort of Brownian motion where time takes imaginary values. This approach permits us to exploit the panoplie of mathematical results and instruments of Brownian motion to obtain solutions of all possible types of problems for vibrations of infinite, semi infinite and finite rods. Furthermore, the finite, signed density of the measure \eqref{funQW} suggests the construction of pseudo-processes (called Fresnel pseudo-process and denoted by $F(t)$, $t>0$) whose finite-dimensional distributions can be constructed according to the rule
\begin{align}
u(x_1,t_1 \ldots , x_n, t_n) \prod_{j=1}^{n}dx_j = & \frac{1}{2}\bigg[Pr\{ B(s_1) \in dx_1, \ldots , B(s_n) \in dx_n \} \Big|_{s_j=it_j}\nonumber \\ 
+ & Pr\{ B(s_1) \in dx_1, \ldots , B(s_n) \in dx_n \}\Big|_{s_j=-it_j} \bigg] \label{funQW2}
\end{align}
$j=1,2,\ldots , n$. The construction of Wiener-type measures for higher-order heat-type equations
\begin{equation}
\frac{\partial u}{\partial t} = \pm \frac{\partial^n u}{\partial x^n} \label{eqho}
\end{equation}
has been realized by different authors since the beginning of the Sixties (see Krylov \cite{Kry60}, Ladokhin \cite{Lad63}). Much work on pseudo-processes connected with \eqref{eqho} has recently been done by Lachal \cite{Lach03, Lach07}, Cammarota and Lachal \cite{LachCam}, where by different means and techniques, various types of functionals of these pseudo-processes have been investigated.

The fractional version of the equation of vibrations of rods
\begin{equation}
\frac{\partial^{2\nu} u}{\partial t^{2\nu}} = - \frac{1}{2^2} \frac{\partial^4 u}{\partial x^4}, \quad 0 < \nu \leq 1 \label{fracpdeINT}
\end{equation} 
is also examined. The Fourier transform of the solution of \eqref{fracpdeINT}, equipped with the necessary initial conditions, takes the form
\begin{align}
U_{2\nu}(\beta, t) = & \int_{-\infty}^{+\infty} e^{i\beta x} u_{2\nu}(x, t)dx =  \frac{1}{2} \left[ E_{\nu, 1}\left( i \frac{\beta^2 t^\nu}{2} \right) + E_{\nu, 1}\left( -i \frac{\beta^2 t^\nu}{2} \right) \right] =  E_{2\nu, 1}\left( \frac{\beta^4 t^{2\nu}}{2^2} \right) \nonumber
\end{align}
where 
\[ E_{\nu,1}(x) = \sum_{k=0}^{\infty} \frac{x^k}{\Gamma(\nu k +1)} \]
is the Mittag-Leffler function. We are able to invert the Fourier transform above and obtain an explicit solution of the fractional equation \eqref{fracpdeINT} in the form
\begin{equation}
u_{2\nu}(x,t) = \frac{1}{\pi \sqrt{2t^{\nu}}} \sum_{m=0}^\infty \frac{1}{m!}\left(- \sqrt{2}\frac{|x|}{\sqrt{t^\nu}} \right)^{m}\cos \left( \frac{m+1}{4}\pi \right) \sin\left(\frac{m+1}{2}\pi \nu \right)\Gamma\left( \frac{m+1}{2}\nu \right). \label{sumsolINT}
\end{equation}
From \eqref{sumsolINT} for $\nu=1$ we obtain the fundamental solution \eqref{funQW} while for $\nu=1/2$ we get one alternative expression for the solution of the biquadratic heat equation. This permits us to give a sketch of the function $u_1(x,t)$, $x \in \mathbb{R}$, $t>0$ which looks like the normal bell-shaped density in the neighborhood of the origin (see figure \ref{lab4ord}). In great detail we examine the function $u_{4/3}$ ($\nu=2/3$) and prove that it is possible to reduce it to the superposition of Airy functions. Also, the case $\nu=1/3$ is analyzed and we prove that it can be expressed in terms of \eqref{funQW} and the Airy function.

The analysis of Section 3 shows that the profile of the vibrating rod has a peak near the origin where the initial disturbance is originated and symmetric damping waves which rapidely decrease in size. In our opinion this accords with what happens with real vibrations where friction dissipates the original energy and therefore fractional equations of vibrations of rods better describe the phenomenon under investigation, see figure \ref{figall}. 

Section 5 is devoted to the multidimensional version of the equation \eqref{eq0} which governs the vibrations of plates. Its general form is
\begin{equation}
\frac{\partial^2 u}{\partial t^2} = - \frac{1}{2^2} \left( \sum_{j=1}^d \frac{\partial^2}{\partial x_j^2} \right) u
\end{equation}
and possesses the fundamental solution of the form
\begin{equation}
u(x_1, \ldots , x_d, t) = \frac{1}{\left( \sqrt{2\pi t}\right)^d} \cos \left( \sum_{j=1}^d \frac{x_j^2}{2t} - d \frac{\pi}{4} \right).
\end{equation}
In the plane we study the vibrations of a circular plate $C_R$ with the Neumann boundary condition on the edge $\partial C_R$. We solve this problem by applying an approach based on the inversion with respect to the circle which parallels the construction of the reflecting planar Brownian motion inside a disk. This is useful to describe the oscillations of thin vibrating structures, started off by a concentrated central initial disturbance. 

Finally, we show that the composition of the Fresnel pseudo-process $F(t)$, $t>0$ with reflecting Brownian motion $|B(t)|$ produces a subordinated process $F(|B(t)|)$ whose one-dimensional law coincides with the fundamental solution to
\begin{equation}
\left\lbrace \begin{array}{l} \frac{\partial u}{\partial t} = - \frac{1}{2^3} \frac{\partial^4 u}{\partial x^4},\\ u(x,0) = \delta(x). \end{array} \right .
\end{equation}
Instead, the composition of $F$ with $T_t = \inf\{ s \geq 0\,:\, B(s)=t \}$ yields the following interesting genuine probability law
\begin{equation}
Pr\{ F(T_t) \in dx \}/dx = \frac{t}{\pi \sqrt{2}} \frac{t^2 + x^2}{t^4 + x^4}, \quad x \in \mathbb{R},\, t>0 \label{disGenuine}
\end{equation}
which solves the fourth-order equation
\begin{equation}
\frac{\partial^4 u}{\partial t^4} + \frac{\partial^4 u}{\partial x^4} = 0
\end{equation}
The distribution \eqref{disGenuine} has two maxima which move in opposite directions as time passes and display a structure similar to the solutions of fractional diffusion equations for a degree of fractionality varying in $1 < \nu < 2$ (see \cite{Fuj90}, \cite{OB09}). Finally the successive compositions of Fresnel pseudo-processes and the law of
\begin{equation}
\mathcal{F}_n(t) = F_1(|F_2(|\ldots F_{n+1}(t) \ldots |)|), \quad t>0
\end{equation}
is shown to be governed by the higher-order equation
\begin{equation}
\frac{\partial^2 u}{\partial t^2}(x,t) = - 2^{-2(2^{n+1} -1)} \frac{\partial^{2^{n+2}}u}{\partial x^{2^{n+2}}}(x,t).
\end{equation}
This is clearly related to what happens with the n-th order iterated Brownian motion as shown in Orsingher and Beghin \cite{OB09}, or also by Baeumer et al. \cite{BMN09} for fractional equations. For higher-order equations, see also D'Ovidio and Orsingher \cite{DO2}.

\subsection{Notations}

\begin{itemize}
\item $u(x,t) = \frac{1}{\sqrt{2\pi t}} \cos\left( \frac{x^2}{2t} - \frac{\pi}{4} \right)$ : fundamental solution of the one-dimensional Fresnel equation,
\item $U(\beta, t) = \int_{-\infty}^{+\infty} e^{i\beta x} u(x,t) dx$,
\item $u^a$ is the solution to the Fresnel equation with absorbing conditions,
\item $u^r$ is the solution to the Fresnel equation with reflecting conditions,
\item $u^{el}$ is the solution to the Fresnel equation with elastic boundary conditions,
\item $p^{el}$ is density of the elastic Brownian motion,
\item $q$ is the fundamental solution to the biquadratic equation.
\item $p^{ref}(r,t)$ is the probability density of the reflecting Brownian motion in a disk $C_R$ ($q^{ref}$ is its kernel),
\item $\bar{p}^{ref}$ is the law of a vibrating plate of radius $R$ with Neumann condition on the border $\partial C_R$ ($\bar{q}^{ref}$ is its kernel),
\item $u_{2\nu}$ is the fundamental solution of the fractional Fresnel equation \eqref{fracpdeINT},
\item $v_{\nu}$ is the fundamental solution to the fractional diffusion equation \eqref{fracdifeqP}.
\end{itemize}
The general unknown functions of the equations appearing in the paper (also the fractional equations) are indicated by $u$ while the solutions of the boundary Cauchy problems are denoted by $u^a$, $u^{el}$, etc.. The solutions of the initial-value problems for fractional Fresnel equations are denoted by $u_{2\nu}$ and for the fractional diffusion equation by $v_\nu$. 
\begin{itemize}
\item $u(x_1, \ldots , x_d, t) = \frac{1}{(\sqrt{2\pi t})^d} \cos\left( \sum_{j=1}^d \frac{x^2_j}{2t} - d\frac{\pi}{4} \right)$ : fundamental solution of the $n$-dimensional Fresnel equation,
\item $U(\beta_1, \ldots , \beta_d, t) = \int_{-\infty}^{+\infty} dx_1 \ldots \int_{-\infty}^{+\infty} dx_d e^{i \beta \sum_{j=1}^d \beta_j x_j } u(x_1, \ldots , x_d, t) $,
\item \[ u(x_1,t_1; \ldots x_n,t_n) = \frac{(2\pi)^{-n/2}}{\prod_{j=1}^n \sqrt{t_j - t_{j-1}} }\cos\left( \sum_{j=1}^n \frac{(x_j - x_{j-1})^2}{2 (t_j-t_{j-1})} - n\frac{\pi}{4} \right)  \]
is the $n$-dimensional measure density of the Fresnel pseudo-process $F(t)$, $t>0$,
\end{itemize}

\begin{itemize}
\item $B(t)$, $t>0$ is a standard Brownian motion,
\item $F(t)$, $t>0$ is the Fresnel pseudo-process, 
\item $T_t=\inf\{ s \geq 0\,:\, B(s) =t \}$.
\end{itemize}

\section{Vibrations of rods and Brownian motion}
The equation of vibrations of a thin, flexible rod has the following form
\begin{equation} 
\frac{\partial^2 u}{\partial t^2} = - \frac{1}{2^2} \frac{\partial^4 u}{\partial x^4}, \quad x \in \mathbb{R},\, t>0. \label{eq1}
\end{equation}
Vibrations of rods differ substantially from vibrations of strings (and membranes in dimension 2) because strings propagate with infinite velocity as heat does. Furthermore, the heat diffusion and the vibrations of rods have strict connections which inspire much of the work of this section.

Since \eqref{eq1} can be written  as
\begin{equation} 
\left( \frac{\partial}{\partial t} + \frac{i}{2} \frac{\partial^2}{\partial x^2} \right) \left( \frac{\partial}{\partial t} - \frac{i}{2} \frac{\partial^2}{\partial x^2} \right)u=0 \label{similarPDE}
\end{equation}
we can decouple the original equation into two Schr\"odinger equations (each of which can be reduced to heat equations by a suitable change of variable, $t^\prime = \pm i t$). The fundamental solution to \eqref{eq1} can be obtained by suitably combining the solutions to
\begin{equation}
\left( \frac{\partial}{\partial t} + \frac{i}{2} \frac{\partial^2}{\partial x^2} \right) v_1= 0, \label{sup1}
\end{equation}
\begin{equation}
\left( \frac{\partial}{\partial t} - \frac{i}{2} \frac{\partial^2}{\partial x^2} \right) v_2= 0. \label{sup2}
\end{equation}
The fundamental solutions to \eqref{sup1} and \eqref{sup2} can be written as
\begin{equation}
\left\lbrace \begin{array}{l} v_1(x,t) = \frac{e^{-i \frac{\pi}{4}}}{\sqrt{2 \pi t}} \left[ \cos \frac{x^2}{2t} + i \sin \frac{x^2}{2t} \right] \\ v_2(x,t) = \frac{e^{i \frac{\pi}{4}}}{\sqrt{2 \pi t}} \left[ \cos \frac{x^2}{2t} - i \sin \frac{x^2}{2t} \right]  \end{array} \right .
\end{equation}
and thus
\begin{align}
u(x,t) = & \frac{1}{2}\Big[ v_1(x,t) + v_2(x,t) \Big] =  \frac{1}{\sqrt{4 \pi t}} \Big[ \cos \frac{x^2}{2t} + \sin \frac{x^2}{2t} \Big] =  \frac{1}{\sqrt{2\pi t}} \cos\left( \frac{x^2}{2t} - \frac{\pi}{4} \right). \label{fun25}
\end{align}
We call \eqref{fun25} Fresnel function and can easily check that
\[ \int_{\mathbb{R}} u(x,t) dx = 1 \]
because 
\begin{equation} 
\int_0^\infty \cos x^2 dx=\int_0^\infty \sin x^2 dx = \frac{1}{2} \sqrt{\frac{\pi}{2}}.
\end{equation}
The function \eqref{fun25} is the solution to the Cauchy problem
\begin{equation}
\left\lbrace \begin{array}{ll} \frac{\partial^2 u}{\partial t^2} = - \frac{1}{2^2} \frac{\partial^4 u}{\partial x^4}\\ u(x,0)=\delta(x)\\ u_t(x,0)=0 \end{array} \right .
\label{pdeProbtm}
\end{equation}
and its Fourier transform reads
\begin{equation}
U(\beta, t) = \int_{-\infty}^{+\infty} e^{i \beta x} u(x, t) dx=\cos \frac{\beta^2 t}{2}. \label{jkl}
\end{equation} 
This can be proved by writing
\begin{align*}
\frac{1}{\sqrt{2\pi t}} \int_{-\infty}^{+\infty} e^{i \beta x} \cos\left( \frac{x^2}{2t} - \frac{\pi}{4} \right)dx = & \frac{1}{2} e^{-i \frac{\pi}{4} - i \frac{\beta^2 t}{2}} \int_{-\infty}^{+\infty} \frac{e^{i \frac{(x + \beta)^2}{2t}}}{\sqrt{2\pi t}} dx \\
+ & \frac{1}{2} e^{i \frac{\pi}{4} + i \frac{\beta^2 t}{2}} \int_{-\infty}^{+\infty} \frac{e^{-i \frac{(x + \beta)^2}{2t}}}{\sqrt{2\pi t}} dx\\
= & \frac{2\sqrt{2}}{\sqrt{\pi}} \cos \frac{\beta^2 t}{2} \int_{0}^{\infty} \cos y^2\, dy = \cos \frac{\beta^2 t}{2}.
\end{align*}
Result \eqref{jkl} can be also obtained by solving the Fourier transform of problem \eqref{pdeProbtm}, that is
\begin{equation}
\left\lbrace \begin{array}{ll} \frac{\partial^2 U}{\partial t^2} = - \frac{\beta^4}{2^2} U(\beta, t),\\ U(\beta,0)=1,\\ U_t(\beta,0)=0. \end{array} \right .
\end{equation}
The structure of the function \eqref{fun25} is illustrated in figure \ref{figgg}, and it represents the profile of the vibrating rod at different times. It shows that the deformation instantaneously propagates along the $x$ axis and the oscillations decay as time passes.
\begin{figure}[ht]
\centering
\includegraphics[scale=.5]{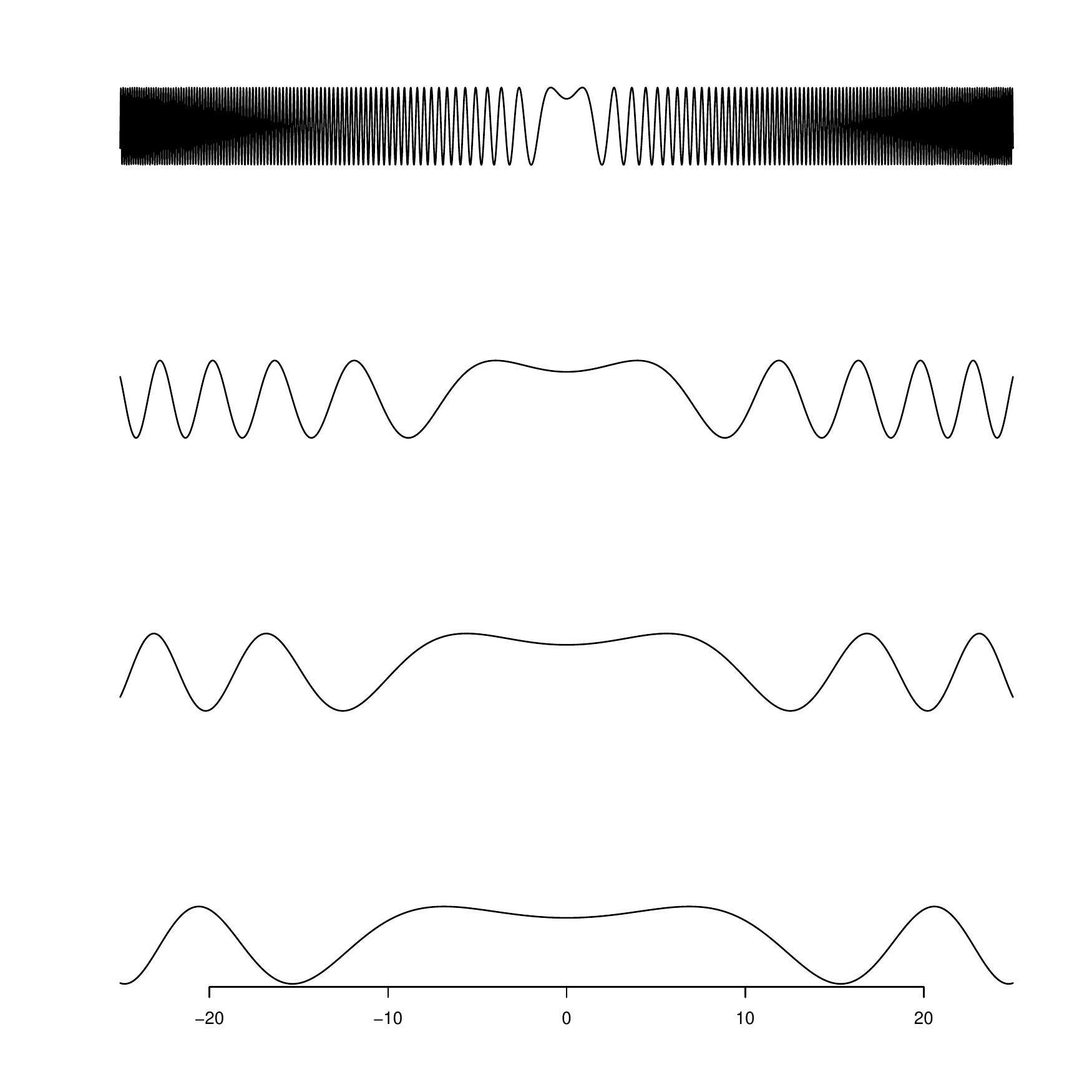} 
\caption{The Fresnel function \eqref{funQW} represents the propagation of vibrations along an infinite rod at time $t=1,20,40,60$.}
\label{figgg}
\end{figure}
The measure density \eqref{fun25} is a sign-varying function and plays an important role in the theory of optical diffraction. By studying diffusion processes with an alternating time direction (forward or backwards) an equation similar to \eqref{similarPDE} was obtained and its solutions investigated in Orsingher \cite{ORS86}.

\begin{os}
\normalfont
The area between two successive roots $\alpha_k$, $\alpha_{k+1}$ of the Fresnel function 	\eqref{funQW} tends to zero as $k \to \infty$. This can be shown by observing that
\begin{align*}
\Bigg| \frac{1}{\sqrt{2\pi t}} \int_{\alpha_k}^{\alpha_{k+1}}  \cos\left( \frac{x^2}{2t} - \frac{\pi}{4} \right)  dx \Bigg| \leq & \Bigg| \frac{1}{\alpha_k} \sqrt{\frac{t}{2\pi}} \int_{\alpha_k}^{\alpha_{k+1}} \frac{x}{t} \cos\left( \frac{x^2}{2t} - \frac{\pi}{4} \right) dx \Bigg|  \\
 \leq & \frac{1}{\alpha_k} \sqrt{\frac{t}{2\pi}} \Bigg| \sin\left( \frac{\alpha_k^2}{2t} -\frac{\pi}{4} \right) - \sin\left( \frac{\alpha_{k+1}^2}{2t} -\frac{\pi}{4} \right) \Bigg|
\end{align*}
where $\alpha_k = \sqrt{2\pi} \sqrt{3/2 + k}$.
\end{os}

By exploiting the ideas outlined above it is possible to study the vibrations of semiinifinite rods with different types of boundary conditions. We start by taking into account the vibrations with an absorbing condition at the free end point $x=0$. The rod is assumed to be clamped at infinity. We arrive now at our first Theorem.
\begin{te}
The solution to the boundary-value problem
\begin{equation}
\left\lbrace \begin{array}{l} \frac{\partial^2 u}{\partial t^2}(x,t) = - \frac{1}{2^2} \frac{\partial^4 u}{\partial x^2}(x,t) , \quad x>0,\, t>0 \\
u(x,0) = \delta(x-y)\\
u(0,t)=0\\
u_t(0,t) = 0 \end{array} \right .
\label{PDEabsorbing}
\end{equation}
is given by
\begin{equation}
u^a(x,t;y,0) = \frac{1}{\sqrt{2\pi t}} \left[ \cos\left( \frac{(x-y)^2}{2t} - \frac{\pi}{4} \right) - \cos\left( \frac{(x+y)^2}{2t} - \frac{\pi}{4} \right) \right]. \label{urod2}
\end{equation}
\end{te}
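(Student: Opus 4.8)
The plan is to obtain \eqref{urod2} by the method of images and then to transport the answer from the heat equation to the Fresnel equation through exactly the averaging rule that produced the free solution \eqref{funQW}. For a standard Brownian motion started at $y>0$ and killed on reaching the origin, the transition density on $(0,\infty)$ is, by the reflection principle, the Dirichlet heat kernel
\[
\mu^a\{dx, s; y\}/dx = \frac{1}{\sqrt{2\pi s}}\left( e^{-\frac{(x-y)^2}{2s}} - e^{-\frac{(x+y)^2}{2s}} \right), \qquad x > 0 .
\]
I would then define
\[
u^a(x,t;y,0)\, dx = \frac{1}{2}\left[ \mu^a\{dx, it; y\} + \mu^a\{dx, -it; y\} \right]
\]
and repeat verbatim, for each of the four Gaussian terms, the elementary computation already carried out in \eqref{funQW}: the terms containing $(x-y)$ combine into $\frac{1}{\sqrt{2\pi t}}\cos\left( \frac{(x-y)^2}{2t} - \frac{\pi}{4} \right)$ and those containing $(x+y)$ into $\frac{1}{\sqrt{2\pi t}}\cos\left( \frac{(x+y)^2}{2t} - \frac{\pi}{4} \right)$, which is precisely \eqref{urod2}.

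It then remains to check that \eqref{urod2} solves \eqref{PDEabsorbing}. The equation is satisfied because the Fresnel function $\frac{1}{\sqrt{2\pi t}}\cos\left( \frac{z^2}{2t} - \frac{\pi}{4} \right)$ solves \eqref{eq1} both for $z = x-y$ and for $z = x+y$ --- a translation of the space variable commutes with $\partial_x^4$ and leaves $\partial_t^2$ untouched --- and the equation is linear. The boundary condition $u(0,t) = 0$ is immediate, since at $x = 0$ the two cosines coincide; and then $u_t(0,t) = 0$ follows at once, being the $t$-derivative of the function $t \mapsto u^a(0,t;y,0) \equiv 0$. For the initial data, note that for $x>0$ the first summand converges, in the sense of tempered distributions, to $\delta(x-y)$, while the second converges to $\delta(x+y)$, which is the zero distribution on $(0,\infty)$; the condition $u_t(x,0) = 0$ is inherited from the corresponding property of the free Fresnel kernel \eqref{fun25}, visible from its Fourier transform $\cos(\beta^2 t/2)$ in \eqref{jkl}.

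The one point that needs genuine care is the meaning of the initial conditions, since \eqref{eq1} is second order in time and the kernel \eqref{urod2} is sign-varying and only conditionally integrable, exactly the situation already met in \eqref{pdeProbtm}. I would make this rigorous by switching to the sine transform adapted to the half-line: \eqref{urod2} equals $\frac{2}{\pi}\int_0^\infty \sin(\beta x)\sin(\beta y)\cos\!\left( \frac{\beta^2 t}{2}\right) d\beta$, from which $u^a(0,t;y,0) = 0$ is obvious, the value at $t = 0$ reproduces $\delta(x-y)$ on $(0,\infty)$, and the $t$-derivative at $t = 0$ carries the factor $\sin(\beta^2 t/2)\big|_{t=0} = 0$. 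The boundary relations, by contrast, are completely elementary and need no such argument, so I expect the distributional identification of the initial profile to be the only mildly technical step.
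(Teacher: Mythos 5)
Your proof is correct and follows essentially the same route as the paper: the absorbing (method-of-images) heat kernel evaluated at imaginary times $\pm it$ and averaged, exactly as in \eqref{funQW}. The paper leaves the "some calculations" and the verification of the boundary and initial conditions implicit, whereas you carry them out (the sine-transform representation is a valid and slightly more careful way to handle the initial data), but the underlying idea is identical.
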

\begin{proof}
Our scheme consists in considering two absorbing Brownian motions, one with imaginary increasing time and one with imaginary decreasing time. Thus, we can write
\begin{equation}
u^a(x,t;y,0) = \frac{1}{2} \left[ p(x, it; y,0) + p(x, -it; y,0) \right] \label{tmlaa}
\end{equation}
where $p(x,s;y,0)$ is the transition function of a standard Brownian motion. Each component of \eqref{tmlaa} solves the boundary-value problem for the Schr\"odinger equation
\[ \frac{\partial u}{\partial t} = \pm i\frac{\partial^2 u}{\partial x^2}.\]
From \eqref{tmlaa} some calculations lead to \eqref{urod2}. 
\end{proof}
We can observe that \eqref{urod2} solves the equation of vibrations of rods and the boundary conditions can be checked by setting $x=0$. In the same spirit we can solve much more complicated boundary-value problems such as those involving the elastic condition at the free end point of the vibrating rod. Elastic Brownian motion was studied by It\^o-McKean  \cite{IK63}. The fractional extension of elastic Brownian motion was investigated by Beghin and Orsingher \cite{BO09spa}. Our result is in the next Theorem.
\begin{te}
The solution to the elastic boundary-value problem
\begin{equation}
\left\lbrace \begin{array}{l} \frac{\partial^2 u}{\partial t^2}(x,t) = -\frac{1}{2^2} \frac{\partial^4 u}{\partial x^4}(x,t), \quad x >0, \; t>0\\
u(x,0) = \delta(x-y)\\
u_t(x,0)=0\\
u_x - \alpha u |_{x=0} =0,\quad \alpha >0 \end{array} \right .
\label{PDEelastic}
\end{equation}
is given by 
\begin{align}
u^{el}(x,t;y,0) = & \frac{1}{\sqrt{2\pi t}} \left[ \cos\left( \frac{(x-y)^2}{2t} - \frac{\pi}{4} \right) - \cos\left(\frac{(x+y)^2}{2t} - \frac{\pi}{4} \right) \right] \nonumber \\
 & + 2 e^{\alpha(x+y)} \int_{x+y}^{+\infty} \frac{w\, e^{-\alpha w}}{\sqrt{2\pi t^3}} \cos\left( \frac{w^2}{2t}  - \frac{3 \pi}{4}\right)\, dw \label{eqelastic1}\\
= & \frac{1}{\sqrt{2\pi t}} \left[ \cos\left( \frac{(x-y)^2}{2t} - \frac{\pi}{4} \right) + \cos\left( \frac{(x+y)^2}{2t} - \frac{\pi}{4} \right)\right]\nonumber \\
& - \frac{2 \alpha e^{\alpha (x+y)}}{\sqrt{2\pi t}} \int_{x+y}^{\infty} e^{-\alpha w} \cos\left( \frac{w^2}{2t} - \frac{\pi}{4} \right) dw. \label{eqelastic2}
\end{align}
\end{te}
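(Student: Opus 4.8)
The plan is to repeat, in the elastic setting, the scheme used for the absorbing problem \eqref{PDEabsorbing}: I take two elastic Brownian motions on $(0,\infty)$, one running with increasing imaginary time and one with decreasing imaginary time, and I superpose them. Concretely, let $p^{el}(x,s;y,0)$ be the transition density of the diffusion on $(0,\infty)$ with generator $\tfrac12\partial_x^2$ subject to $p_x-\alpha p\big|_{x=0}=0$ --- reflection at the origin with killing proportional to the local time there, the process studied by It\^o and McKean \cite{IK63} --- and set, in analogy with \eqref{tmlaa},
\[
u^{el}(x,t;y,0)=\tfrac12\Big[\,p^{el}(x,it;y,0)+p^{el}(x,-it;y,0)\,\Big].
\]
Since $\partial_s p^{el}=\tfrac12\partial_x^2 p^{el}$, the substitution $s=\pm it$ turns each summand into a solution of one of the Schr\"odinger equations \eqref{sup1}--\eqref{sup2}, both carrying the initial value $\delta(x-y)$ and the \emph{same} relation $p_x-\alpha p|_{x=0}=0$, which does not involve $t$ and is therefore preserved by the time change. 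Averaging and using that the two factors in \eqref{similarPDE} commute shows that $u^{el}$ solves the Fresnel equation \eqref{eq1} together with $u(x,0)=\delta(x-y)$, $u_t(x,0)=0$ and $u_x-\alpha u|_{x=0}=0$; what remains is to make it explicit.

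The decisive input is the closed form of $p^{el}$. Passing to the Laplace transform in $s$ and writing $\mu=\sqrt{2\lambda}$, an elementary boundary-value computation gives
\[
\int_0^\infty e^{-\lambda s}\,p^{el}(x,s;y,0)\,ds=\frac{e^{-\mu|x-y|}-e^{-\mu(x+y)}}{\mu}+\frac{2\,e^{-\mu(x+y)}}{\mu+\alpha},
\]
whose first group inverts to the absorbing kernel \eqref{urod2} and whose last term inverts, after completing the square in the resulting error function, so that
\[
p^{el}(x,s;y,0)=\frac{e^{-\frac{(x-y)^2}{2s}}}{\sqrt{2\pi s}}+\frac{e^{-\frac{(x+y)^2}{2s}}}{\sqrt{2\pi s}}-2\alpha\,e^{\alpha(x+y)}\int_{x+y}^{\infty}\frac{e^{-\alpha w-\frac{w^2}{2s}}}{\sqrt{2\pi s}}\,dw .
\]

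Next I would put $s=\pm it$ and average term by term. The two Gaussian heads recombine, exactly as in the computation leading to \eqref{funQW}--\eqref{fun25}, into $\tfrac1{\sqrt{2\pi t}}\cos\big(\tfrac{z^2}{2t}-\tfrac\pi4\big)$ for $z=x-y$ and $z=x+y$; the same identity applied under the integral sign (which converges absolutely for $s=\pm it$ thanks to the factor $e^{-\alpha w}$) replaces $e^{-w^2/2s}/\sqrt{2\pi s}$ by $\tfrac1{\sqrt{2\pi t}}\cos\big(\tfrac{w^2}{2t}-\tfrac\pi4\big)$, so that $u^{el}$ acquires the form \eqref{eqelastic2}. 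The equivalence of \eqref{eqelastic2} with \eqref{eqelastic1} is then a single integration by parts, based on
\[
\frac{d}{dw}\Big[\tfrac1{\sqrt{2\pi t}}\cos\big(\tfrac{w^2}{2t}-\tfrac\pi4\big)\Big]=-\,\tfrac{w}{\sqrt{2\pi t^3}}\cos\big(\tfrac{w^2}{2t}-\tfrac{3\pi}4\big),
\]
with the boundary contribution at $+\infty$ suppressed by $e^{-\alpha w}$.

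The step I expect to be the main obstacle is the passage to imaginary time in the last term of $p^{el}$: for real $s$ that term secretly carries a Gaussian factor $e^{\alpha^2 s/2}$, so one has to justify that the analytic continuation of the elastic kernel from $\{\mathrm{Re}\,s>0\}$ to the imaginary axis is indeed the oscillatory integral written above, and that the associated contour rotation is legitimate. A self-contained alternative, which I would keep in reserve, is to adopt \eqref{eqelastic2} as an ansatz and verify directly that it satisfies the Fresnel equation, the two initial conditions, and --- on setting $x=0$, where the $x$-derivatives of the two cosine terms cancel --- the elastic boundary condition; that verification is a bounded, purely computational matter.
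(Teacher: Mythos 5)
Your proposal takes essentially the same route as the paper: write $u^{el}(x,t;y,0)=\tfrac12\left[p^{el}(x,it;y,0)+p^{el}(x,-it;y,0)\right]$ with $p^{el}$ the elastic heat kernel, substitute imaginary time so the Gaussian terms recombine into Fresnel cosines, and pass between \eqref{eqelastic1} and \eqref{eqelastic2} by a single integration by parts. You actually supply more detail than the paper's two-line argument --- the resolvent derivation of $p^{el}$ (which checks out against the paper's stated kernel) and the caveat about rotating the contour in the tail integral --- so the proposal is correct and, if anything, more complete.
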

\begin{proof}
The solution to \eqref{PDEelastic} can be written as
\begin{equation}
u^{el}(x,t;y,0) = \frac{1}{2}\left[ p^{el}(x, it; y,0)  + p^{el}(x, -it; y,0) \right]
\end{equation}
where $p^{el}(x, s; y,0)$ is the solution to the heat equation with the elastic condition at $x=0$ which can be written as
\begin{align}
p^{el}(x,s;y,0) = & \frac{e^{-\frac{(x-y)^2}{2t}}}{\sqrt{2\pi t}} - \frac{e^{-\frac{(x+y)^2}{2t}}}{\sqrt{2\pi t}} + 2e^{\alpha(x+y)} \int_{x+y}^{\infty} we^{-\alpha w} \frac{e^{-\frac{w^2}{2t}}}{\sqrt{2\pi t^3}}dw \label{sel1} \\
= & \frac{e^{-\frac{(x-y)^2}{2t}}}{\sqrt{2\pi t}} + \frac{e^{-\frac{(x+y)^2}{2t}}}{\sqrt{2\pi t}} - 2\alpha e^{\alpha(x+y)} \int_{x+y}^{\infty} e^{-\alpha w} \frac{e^{-\frac{w^2}{2t}}}{\sqrt{2\pi t}}dw\label{sel2}
\end{align}
By considering \eqref{sel1} and \eqref{sel2} we arrive at the explicit solution of problem \eqref{PDEelastic} which coincides with \eqref{eqelastic1} (or alternatively with \eqref{eqelastic2}).
\end{proof}

\begin{os}
\normalfont
For $\alpha \to \infty$ from  \eqref{sel1} we extract the absorbing solution \eqref{PDEabsorbing}. For $\alpha=0$ we obtain the reflecting solution, that is the reflecting solution to the equation of vibrations of rods with the Neumann condition at $x=0$. We can extract from \eqref{sel1} the second expression \eqref{sel2} by means of an integration by parts. In order to check that the solutions \eqref{eqelastic1} and \eqref{eqelastic2} satisfy the equation of vibrations of rods it is convenient to rewrite the third term of \eqref{eqelastic2} as
\[ 2\alpha \int_0^\infty  \frac{e^{-\alpha w}}{\sqrt{2\pi t}} \cos\left( \frac{((x+y) + w)^2}{2t} - \frac{\pi}{4} \right) dw. \]
\end{os}

\begin{os}
\normalfont
We note that the boundary conditions 
\[ u(0,t) = 0 \]
and
\begin{equation}
\frac{\partial^2 u}{\partial x^2}(x,t) \Bigg|_{x=0} = 0 \label{tmp9i}
\end{equation}
lead to the solution \eqref{urod2}. The condition \eqref{tmp9i} means that the bending moment at the free end point $x=0$ vanishes. The ''reflecting conditions'' 
\[ \frac{\partial u}{\partial x}(x,t) \Big|_{x=0} = 0 \]
and
\[ \frac{\partial^3 u}{\partial x^3}(x,t) \Big|_{x=0}=0 \]
both lead to the solution 
\[ u^{r}(x,t;y,0) = \frac{1}{\sqrt{2\pi t}} \left[ \cos\left( \frac{(x-y)^2}{2t} - \frac{\pi}{4} \right) + \cos\left( \frac{(x+y)^2}{2t}  - \frac{\pi}{4} \right) \right]. \]
\end{os}

\begin{os}
\normalfont
We now evaluate the ''survival measures'' of the elastic, reflecting and absorbing kernels obtained so far. We restrict ourselves to the elastic case since the other two follow without effort.
\begin{align}
\int_0^{\infty} u^{el}(x,t;y,0)dx = & \frac{1}{\sqrt{2\pi t}} \Bigg[ \int_0^\infty \bigg[ \cos\left( \frac{(x-y)^2}{2t} - \frac{\pi}{4} \right) + \cos\left( \frac{(x+y)^2}{2t} - \frac{\pi}{4} \right) \bigg]dx \nonumber \\
&  - 2\alpha \int_0^\infty e^{\alpha(x+y)} dx \int_{x+y}^{\infty} e^{-\alpha w} \cos\left( \frac{w^2}{2t} - \frac{\pi}{4} \right) \, dw \Bigg]\nonumber \\
= & \frac{1}{\sqrt{2\pi t}} \Bigg[ \int_{-y}^{\infty} \cos\left( \frac{w^2}{2t} - \frac{\pi}{4} \right) dw + \int_{y}^{\infty} \cos\left( \frac{w^2}{2t} - \frac{\pi}{4} \right)dw \nonumber \\
& - 2\alpha \int_{y}^{\infty} e^{-\alpha w} \cos\left( \frac{ w^2}{2t} - \frac{\pi}{4} \right)dw \int_0^{w-y} e^{\alpha(x+y)} dx \Bigg]\nonumber \\
= & \frac{1}{\sqrt{2\pi t}} \Bigg[ \int_{-y}^{y} \cos\left( \frac{w^2}{2t} - \frac{\pi}{4} \right) dw + 2 e^{\alpha y} \int_{y}^{\infty} e^{\alpha w} \cos\left( \frac{w^2}{2t} - \frac{\pi}{4} \right) dw \Bigg]. \label{secTin}
\end{align}
For $\alpha =0$ we have the reflecting case and the integral above equals one while for $\alpha \to \infty$ the second term in \eqref{secTin} goes to zero and we retrieve the ''survival measures'' of the absorbing case.
\end{os}

For rods of finite length we can get explicit solutions by resorting again to the decomposition of equation \eqref{eq1} on considering the solution of the corresponding Schr\"odinger equation. We have that
\begin{te}
For the rod of finite length $L$ we have that the solution of
\begin{equation}
\left\lbrace \begin{array}{l} \frac{\partial^2 u}{\partial t^2}(x,t) = -\frac{1}{2^2} \frac{\partial^4 u}{\partial x^4}(x,t), \quad 0 < x < L, \; t>0\\
u(x,0) = \delta(x-y)\\
u_t(x,0)=0\\
u_x |_{x=0} = u_x |_{x=L} = 0 \end{array} \right .
\label{PDEfiniteL}
\end{equation}
is given by
\begin{align}
u^{r}(x,t;y,0) = & \frac{1}{\sqrt{2\pi t}} \sum_{k=-\infty}^{+\infty} \Bigg[ \cos\left( \frac{(x-y+2kL)^2}{2t} - \frac{\pi}{4}\right)  + \cos\left( \frac{(x+y+2kL)^2}{2t} - \frac{\pi}{4} \right) \Bigg].
\end{align}
\end{te}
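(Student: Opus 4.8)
The plan is to follow verbatim the scheme of the previous theorems: first solve the associated heat problem on the finite segment $[0,L]$ with Neumann (reflecting) conditions at both ends by the classical method of images, and then pass to the Fresnel equation through the imaginary-time superposition
\begin{equation*}
u^{r}(x,t;y,0)=\frac12\Big[p^{ref}_{L}(x,it;y,0)+p^{ref}_{L}(x,-it;y,0)\Big],
\end{equation*}
where $p^{ref}_{L}$ denotes the transition density of a Brownian motion reflected both at $0$ and at $L$. Since each factor above solves one of the two Schr\"odinger equations obtained by decoupling \eqref{eq1}, the average solves the vibrations equation, exactly as in \eqref{funQW}.

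The first step is the image representation of reflecting Brownian motion in $[0,L]$. The orbit of the starting point $y$ under the group generated by the reflections $x\mapsto-x$ and $x\mapsto 2L-x$ is the set $\{\,2kL+y,\ 2kL-y:\ k\in\mathbb{Z}\,\}$, so that
\begin{equation*}
p^{ref}_{L}(x,s;y,0)=\frac{1}{\sqrt{2\pi s}}\sum_{k=-\infty}^{+\infty}\left[e^{-\frac{(x-y+2kL)^2}{2s}}+e^{-\frac{(x+y+2kL)^2}{2s}}\right],
\end{equation*}
each summand being a solution of the heat equation, and the pairing of indices ensuring $\partial_x p^{ref}_{L}=0$ at $x=0$ and $x=L$. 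Substituting $s=\pm it$ in a single Gaussian term and halving the sum gives, by the very computation of \eqref{funQW},
\begin{equation*}
\frac12\left[\frac{e^{-\frac{z^{2}}{2it}}}{\sqrt{2\pi it}}+\frac{e^{-\frac{z^{2}}{-2it}}}{\sqrt{2\pi(-it)}}\right]=\frac{1}{\sqrt{2\pi t}}\cos\!\left(\frac{z^{2}}{2t}-\frac{\pi}{4}\right),
\end{equation*}
and performing this term by term with $z=x-y+2kL$ and $z=x+y+2kL$ yields precisely the series claimed for $u^{r}(x,t;y,0)$.

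Next I would verify that this series satisfies problem \eqref{PDEfiniteL}. The equation of vibrations of rods holds term by term, because each summand is a translate of the Fresnel function \eqref{fun25}, hence a fundamental solution of \eqref{eq1}. The boundary conditions follow from the same index identifications used for $p^{ref}_{L}$: differentiating in $x$ commutes with the $s=\pm it$ substitution, so pairing $k\leftrightarrow -k$ in the two families makes $u^{r}_{x}$ vanish at $x=0$, and pairing $k\leftrightarrow -k-1$ makes it vanish at $x=L$; the same argument gives $u^{r}_{xxx}=0$ at both ends, so that the fourth-order problem is well posed, as already observed in the Remark following \eqref{eqelastic2}. As for the initial data, $u^{r}_{t}(x,0)=0$ because the expression $\tfrac12[p^{ref}_{L}(x,it;y,0)+p^{ref}_{L}(x,-it;y,0)]$ is manifestly even in $t$, just as in the free case \eqref{pdeProbtm}–\eqref{jkl}; while as $t\downarrow 0$, every center $2kL\pm y$ lies outside the open interval $(0,L)$ except the center $y$ of the $k=0$ term of the first family, so on $(0,L)$ only $\frac{1}{\sqrt{2\pi t}}\cos(\frac{(x-y)^{2}}{2t}-\frac{\pi}{4})\to\delta(x-y)$ survives in the sense of distributions.

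The main obstacle is the convergence of the series. Unlike the heat-kernel images, which decay super-exponentially in $k$, the Fresnel cosines do not decay, so the sum over $k$ must be understood as a conditionally convergent (equivalently, Abel- or Ces\`aro-regularized) series, or interpreted through its integrated form, for which $\int\cos\!\big(\frac{(x-y+2kL)^{2}}{2t}-\frac{\pi}{4}\big)\,dx\to 0$ as $|k|\to\infty$ by the estimate given in the Remark on the vanishing of the area between successive roots of \eqref{funQW}. Once this summability is established, all the term-by-term operations above — the imaginary-time substitution, the differentiations in $x$ and $t$, and the $t\downarrow 0$ limit — are justified, and the verification of the equation together with the boundary and initial conditions proceeds as sketched.
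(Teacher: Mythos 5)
Your proposal is correct and follows essentially the same route as the paper: the paper's proof simply writes $u^r(x,t;y,0)=\tfrac12[u(x,it;y,0)+u(x,-it;y,0)]$ where $u$ is the classical Neumann heat kernel on $[0,L]$, i.e.\ exactly the method-of-images series you construct, evaluated at imaginary times. Your additional remarks on the boundary and initial conditions and on the conditional convergence of the Fresnel image series go beyond what the paper states (its proof is two lines and does not address summability), but they are consistent with it and only make the argument more complete.
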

\begin{proof}
The solution can be written as
\[ u^r(x,t;y,0) = \frac{1}{2}\left[ u(x, it;y,0) + u(x,-it; y,0) \right] \]
where $u(x,t;y,0)$ is the well-known solution of the Cauchy problem of the heat equation
\begin{equation}
\left\lbrace \begin{array}{l} \frac{\partial u}{\partial t}(x,t) = \frac{1}{2} \frac{\partial^2 u}{\partial x^2}(x,t), \quad 0 < x < L, \; t>0\\
u(x,0) = \delta(x-y)\\
u_t(x,0)=0\\
u_x |_{x=0} = u_x |_{x=L} = 0. \end{array} \right .
\end{equation}
\end{proof}

\section{Fractional equations of vibrations of rods and fractional diffusions}
\subsection{General results}
In this section we consider the fractional version of the equation of vibrations of rods
\begin{equation} 
\frac{\partial^{2\nu} u}{\partial t^{2\nu}} = - \frac{1}{2^2} \frac{\partial^{4} u}{\partial x^4}, \quad 0<\nu \leq 1, \; x \in \mathbb{R},\, t>0 \label{pdfgh}
\end{equation}
subject to the initial conditions
\begin{equation} 
u(x,0) = \delta(x), \quad 0 < \nu \leq 1  \label{init1L}
\end{equation}
and also
\begin{equation} 
u_t(x, 0) = 0 \quad \textrm{for} \quad 1/2 < \nu \leq 1 \label{init2L}
\end{equation}
The differential operator appearing in \eqref{pdfgh} must be understood in the sense of Dzhrbashyan-Caputo, that is (for information on this derivative consult Podlubny \cite{Pob99})
\[ \frac{\partial^\nu u}{\partial t^{\nu}}(x,t) = \frac{1}{\Gamma(m-\nu)} \int_0^t \frac{\frac{\partial^m u}{\partial t^m}(x,s)}{(t-s)^{\nu +1-m}}ds, \quad \textrm{ for } m-1 < \nu < m, \quad m \in \mathbb{N}. \]
For $\nu=1$ equation \eqref{pdfgh} coincides with the equation of vibration of rods and beams while for $\nu=1/2$ it gives the biquadratic heat equation. The latter equation has been investigated by many researchers since the Sixties and even a stochastic calculus related to it has been worked out (see for example Krylov \cite{Kry60}, Hochberg \cite{Hoc78}, Nikitin and Orsingher \cite{NO00}, Lachal \cite{Lach03, Lach07}). 

Our first result concerns the Fourier transform of the solution to \eqref{pdfgh} (with initial conditions \eqref{init1L} and \eqref{init2L}).
\begin{te}
The Fourier transform
\begin{equation}
U_{\nu}(\xi, t) = \int_{-\infty}^{+\infty} e^{i \beta x} u_{\nu}(x,t) dx
\end{equation}
of the solution to the Cauchy problem
\begin{equation}
\left\lbrace \begin{array}{l} \frac{\partial^{2\nu} u}{\partial t^{2\nu}} = - \frac{1}{2^2} \frac{\partial^4 u}{\partial x^4}, \quad 0<\nu \leq 1, \; x \in \mathbb{R},\, t>0 \\
u(x,0) = \delta(x)\\
u_t(x,0)= 0 \end{array} \right .
\label{freneqfrac2}
\end{equation}
reads
\begin{align}
U_{\nu}(\xi, t) = & \frac{1}{2} \left[ E_{\nu, 1}\left( i \frac{\beta^2 t^{\nu}}{2} \right) + E_{\nu, 1}\left( -i \frac{\beta^2t^{\nu}}{2}\right) \right] = E_{2\nu, 1}\left( \frac{\beta^4 t^{2\nu}}{2^2} \right)
\label{frenfracF}
\end{align}
where
\begin{equation}
E_{\nu,1}(z)=\sum_{k \geq 0} \frac{z^k}{\Gamma(\nu k +1)}
\end{equation}
is the Mittag-Leffler function.
\end{te}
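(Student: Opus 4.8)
The plan is to reduce \eqref{freneqfrac2} to a purely temporal fractional Cauchy problem by passing to the Fourier transform in $x$, and then to recognize the Mittag--Leffler function as its solution. Since the Dzhrbashyan--Caputo operator $\partial^{2\nu}/\partial t^{2\nu}$ differentiates only in $t$, it commutes with $\int_{-\infty}^{+\infty} e^{i\beta x}\,\cdot\,dx$; moreover $\widehat{\partial^4 u/\partial x^4}(\beta,t) = (-i\beta)^4 U_\nu(\beta,t) = \beta^4 U_\nu(\beta,t)$ and $\widehat{\delta}=1$. Hence $U_\nu(\beta,t)$ solves
\[
\frac{\partial^{2\nu} U_\nu}{\partial t^{2\nu}}(\beta,t) = -\frac{\beta^4}{2^2}\,U_\nu(\beta,t),\qquad U_\nu(\beta,0)=1,
\]
together with $(U_\nu)_t(\beta,0)=0$ when $2\nu>1$; for $0<2\nu\le 1$ only the first datum enters the Caputo derivative, which is exactly why the second initial condition \eqref{init2L} is imposed only for $\nu>1/2$.

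I would then solve this equation by Laplace transform in $t$. Writing $\tilde U_\nu(\beta,s)=\int_0^\infty e^{-st}U_\nu(\beta,t)\,dt$ and using $\mathcal{L}\{\partial^{2\nu}g/\partial t^{2\nu}\}(s)=s^{2\nu}\tilde g(s)-s^{2\nu-1}g(0)-s^{2\nu-2}g'(0)$ (the last term absent when $2\nu\le 1$), the equation becomes $s^{2\nu}\tilde U_\nu - s^{2\nu-1} = -\frac{\beta^4}{2^2}\tilde U_\nu$, so that
\[
\tilde U_\nu(\beta,s)=\frac{s^{2\nu-1}}{\,s^{2\nu}+\beta^4/2^2\,}.
\]
Inverting through the classical pair $\mathcal{L}\{E_{2\nu,1}(-\lambda t^{2\nu})\}(s)=s^{2\nu-1}/(s^{2\nu}+\lambda)$ yields $U_\nu(\beta,t)=E_{2\nu,1}(-\beta^4 t^{2\nu}/2^2)$, the last member of \eqref{frenfracF}. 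A transform-free check is also available: substitute the series $E_{2\nu,1}(-\lambda t^{2\nu})=\sum_{k\ge0}(-\lambda)^k t^{2\nu k}/\Gamma(2\nu k+1)$ into the equation and differentiate term by term using $\partial^{2\nu}t^{p}/\partial t^{2\nu}=\Gamma(p+1)\,t^{p-2\nu}/\Gamma(p-2\nu+1)$, verifying at the same time both initial conditions.

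To obtain the first equality in \eqref{frenfracF} — the one that makes the decoupling \eqref{similarPDE} into two (fractional) Schr\"odinger pieces explicit — I would set $w=\beta^2 t^\nu/2$ and expand
\[
\frac12\big[\,E_{\nu,1}(iw)+E_{\nu,1}(-iw)\,\big]=\sum_{k\ge0}\frac{i^k+(-i)^k}{2}\,\frac{w^k}{\Gamma(\nu k+1)}=\sum_{j\ge0}\frac{(-1)^j w^{2j}}{\Gamma(2\nu j+1)}=E_{2\nu,1}(-w^2),
\]
the odd powers cancelling. The two averaged terms $E_{\nu,1}(\mp i\beta^2 t^\nu/2)$ are precisely the Fourier transforms of the solutions of the fractional Schr\"odinger equations $\partial^\nu v/\partial t^\nu=\pm\frac{i}{2}\,\partial^2 v/\partial x^2$, so this identity is the spectral version of the superposition rule \eqref{funQW2}; for $\nu=1$ it degenerates to $\frac12(e^{i\beta^2t/2}+e^{-i\beta^2t/2})=\cos(\beta^2t/2)$, recovering \eqref{jkl}.

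The argument is essentially routine; the one point that needs genuine care is the initial-value bookkeeping, namely distinguishing the regimes $0<\nu\le 1/2$ and $1/2<\nu\le1$ so that the Laplace-transform identity for the Caputo derivative is applied with the correct number of boundary terms, and confirming that the constraint $u_t(x,0)=0$ is the one compatible with — indeed automatically satisfied by — the Mittag--Leffler solution when $\nu>1/2$ (whereas for $\nu\le 1/2$ that derivative fails to vanish at $t=0$, which is why the condition is dropped). The interchange of the Fourier transform with the time operator, the termwise inversion of the Laplace transform, and the term-by-term fractional differentiation are all justified by the entire-function growth of $E_{2\nu,1}$.
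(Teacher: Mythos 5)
Your proof is correct and follows essentially the same route as the paper's: Fourier transform in $x$, Laplace transform in $t$ yielding $\mu^{2\nu-1}/(\mu^{2\nu}+\beta^4/2^2)=\tfrac{1}{2}\left[\mu^{\nu-1}/(\mu^{\nu}+i\beta^2/2)+\mu^{\nu-1}/(\mu^{\nu}-i\beta^2/2)\right]$, and inversion via the standard Mittag--Leffler pair; your extra bookkeeping of the Caputo boundary terms in the two regimes $0<\nu\le 1/2$ and $1/2<\nu\le 1$, and the termwise series verification of the averaging identity, only add rigor to what the paper states in one line. Note that your sign is the correct one: the inversion gives $E_{2\nu,1}\left(-\beta^4 t^{2\nu}/2^2\right)$, consistent with $\tfrac{1}{2}\left[E_{\nu,1}(iw)+E_{\nu,1}(-iw)\right]=E_{2\nu,1}(-w^2)$ and with $\cos(\beta^2 t/2)$ at $\nu=1$, so the absent minus sign in the last member of \eqref{frenfracF} is a typo in the statement rather than a flaw in your argument.
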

\begin{proof}
The Laplace transform of the solution to \eqref{freneqfrac2} reads
\begin{align}
\int_0^\infty e^{-\mu t} \int_{-\infty}^{+\infty} e^{i\beta x} u_{2\nu}(x,t)\, dx\, dt = & \frac{1}{2} \left[ \frac{\mu^{\nu -1}}{\mu^\nu + i\beta^2/2} + \frac{\mu^{\nu -1}}{\mu^\nu - i \beta^2/2} \right] = \frac{\mu^{2\nu -1}}{\mu^{2\nu} + \beta^4/2^2}. \label{asdFr}
\end{align}
The inverse Laplace transform of \eqref{asdFr} yields \eqref{frenfracF}.
\end{proof}

For the evaluation of the inverse Fourier transform the first expression of \eqref{frenfracF} is particularly convenient. This is because each term in \eqref{frenfracF} represents the Fourier transform of the solution to the fractional diffusion equation
\begin{equation}
\left\lbrace \begin{array}{l} \frac{\partial^\nu u}{\partial t^\nu} = \lambda^2 \frac{\partial^2 u}{\partial x^2}, \quad 0<\nu \leq 1, \; x \in \mathbb{R},\, t>0\\
u(x,0)=\delta(x)\\
u_t(x,0)=0. \end{array} \right .
\label{eqORS}
\end{equation}
The solution to \eqref{eqORS} can be written down in terms of Wright functions
\[ W_{\alpha, \beta}(z) = \sum_{k \geq 0} \frac{z^k}{k! \Gamma(\alpha k + \beta)}, \quad \alpha>-1,\, \beta >0. \]
We now arrive at the next result, concerning the inverse Fourier transform of \eqref{frenfracF}.
\begin{te}
The solution to \eqref{eqORS} is given by
\begin{align}
u_{2\nu}(x,t) = & \frac{1}{\pi \sqrt{2t^{\nu}}} \sum_{m=0}^\infty \frac{1}{m!}\left(- \sqrt{2}\frac{|x|}{\sqrt{t^\nu}} \right)^{m}\cos \left( \frac{m+1}{4}\pi \right) \sin\left(\frac{m+1}{2}\pi \nu \right)\Gamma\left( \frac{m+1}{2}\nu \right) \label{sumsol}\\
= & \frac{1}{\sqrt{2t^{\nu}}} \sum_{m=0}^\infty \frac{1}{m!}\left(- \sqrt{2}\frac{|x|}{\sqrt{t^\nu}} \right)^{m} \frac{\cos \left( \frac{m+1}{4}\pi \right)}{ \Gamma\left( 1- \frac{m+1}{2}\nu \right)}
\end{align}
for $x \in \mathbb{R}$, $t>0$ and $0 < \nu \leq 1$.
\end{te}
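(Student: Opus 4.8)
The plan is to invert the Fourier transform \eqref{frenfracF}, so as to recover the density $u_{2\nu}$ solving the fractional Fresnel problem \eqref{freneqfrac2}. For this I would use the first representation $U_{2\nu}(\beta,t)=\tfrac12\big[E_{\nu,1}(i\beta^2t^\nu/2)+E_{\nu,1}(-i\beta^2t^\nu/2)\big]$, since, as noted just before the statement, $E_{\nu,1}(-\lambda^2\beta^2t^\nu)$ is the Fourier transform of the fundamental solution $v_\nu(x,t;\lambda^2)$ of the fractional diffusion equation \eqref{eqORS}; hence each summand corresponds to a (complex) diffusivity $\lambda^2=\mp i/2$, and
\[
u_{2\nu}(x,t)=\tfrac12\big[v_\nu(x,t;-i/2)+v_\nu(x,t;i/2)\big].
\]
The first ingredient I would record is the Wright/Mainardi-function form of $v_\nu$, valid for $0<\nu\le1$,
\[
v_\nu(x,t;\lambda^2)=\frac{1}{2\lambda\sqrt{t^\nu}}\,W_{-\nu/2,\,1-\nu/2}\!\left(-\frac{|x|}{\lambda\sqrt{t^\nu}}\right)
=\frac{1}{2\lambda\sqrt{t^\nu}}\sum_{m=0}^{\infty}\frac{1}{m!\,\Gamma\!\big(1-\tfrac{\nu}{2}(m+1)\big)}\left(-\frac{|x|}{\lambda\sqrt{t^\nu}}\right)^{m},
\]
i.e. the classical inversion of the Mittag-Leffler symbol of a one-dimensional fractional diffusion.

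Then I would substitute $\lambda^2=\mp i/2$, so that $\lambda^{-(m+1)}=(\sqrt2)^{m+1}e^{\pm i(m+1)\pi/4}$. Since the two series in $\tfrac12\big(v_\nu(\cdot;-i/2)+v_\nu(\cdot;i/2)\big)$ are complex conjugates term by term, their sum turns the factor $\lambda^{-(m+1)}$ into $2(\sqrt2)^{m+1}\cos\tfrac{(m+1)\pi}{4}$; gathering the remaining constant, $\tfrac12\cdot\tfrac1{2\sqrt{t^\nu}}\cdot2(\sqrt2)^{m+1}=(\sqrt2)^m/\sqrt{2t^\nu}$, yields directly the second displayed formula
\[
u_{2\nu}(x,t)=\frac{1}{\sqrt{2t^\nu}}\sum_{m=0}^{\infty}\frac{1}{m!}\left(-\sqrt2\,\frac{|x|}{\sqrt{t^\nu}}\right)^{m}\frac{\cos\!\big(\tfrac{m+1}{4}\pi\big)}{\Gamma\!\big(1-\tfrac{m+1}{2}\nu\big)}.
\]
The first form \eqref{sumsol} then follows from Euler's reflection formula $\Gamma(z)\Gamma(1-z)=\pi/\sin(\pi z)$ at $z=\tfrac{m+1}{2}\nu$, that is $1/\Gamma\!\big(1-\tfrac{m+1}{2}\nu\big)=\tfrac1\pi\sin\!\big(\tfrac{m+1}{2}\pi\nu\big)\Gamma\!\big(\tfrac{m+1}{2}\nu\big)$, the identity being read by continuity at the exceptional values $\tfrac{m+1}{2}\nu\in\mathbb N$ (where both sides are $0$). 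Convergence of the series for all $x\in\mathbb R$, $t>0$, $0<\nu\le1$ — hence the legitimacy of the termwise rearrangements above — is the standard growth bound for $1/\Gamma$, equivalently the fact that $W_{-\nu/2,\,1-\nu/2}$ is an entire function; and setting $\nu=1$ returns $\cos\!\big(\tfrac{x^2}{2t}-\tfrac\pi4\big)/\sqrt{2\pi t}$, i.e. \eqref{funQW}.

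The step carrying the real weight — and the expected obstacle — is the Fourier inversion together with the Wright-function representation of $v_\nu$. For $0<\nu<1$ the inversion integral $\tfrac1{2\pi}\int e^{-i\beta x}U_{2\nu}(\beta,t)\,d\beta$ converges absolutely, since $E_{\nu,1}(\pm iy)=O(1/y)$ as $y\to+\infty$ — precisely the Mittag-Leffler asymptotics, valid because for $\nu<1$ the imaginary axis lies in the sector $|\arg(-z)|<\pi(1-\nu/2)$ — whereas for the endpoint $\nu=1$ one falls back on the oscillatory Fresnel-integral evaluation already used for \eqref{jkl}. A self-contained alternative to quoting the Wright representation would be to work with Mellin transforms in $|x|$: express $\mathcal M\{u_{2\nu}\}(s)$ through $\mathcal M\{U_{2\nu}\}(1-s)$ by the Parseval relation for the Fourier-cosine transform, evaluate $\int_0^\infty\beta^{s-1}E_{\nu,1}(\pm i\beta^2t^\nu/2)\,d\beta$ as a ratio of Gamma functions, combine the two conjugate contributions into the $\cos(\cdot)$ factor, and obtain \eqref{sumsol} by summing the residues of the inverse-Mellin integrand at the poles of its Gamma numerator.
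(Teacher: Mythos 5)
Your argument is correct and follows essentially the same route as the paper: decompose $U_{2\nu}$ into the two Mittag--Leffler terms, identify each as the Fourier transform of a fractional diffusion with complex diffusivity $\lambda^2=\mp i/2$, expand the corresponding Wright functions $W_{-\nu/2,1-\nu/2}$ as series, combine the conjugate terms into the factor $\cos\bigl(\tfrac{m+1}{4}\pi\bigr)$, and finish with the reflection formula. The extra remarks on convergence, the exceptional values $\tfrac{m+1}{2}\nu\in\mathbb{N}$, and the Mellin-transform alternative are sound but not needed beyond what the paper does.
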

\begin{proof}
For the fractional diffusion equation \eqref{eqORS} the solution reads
\begin{equation}
v_{\nu}(x,t)= \frac{1}{2\lambda \sqrt{t^\nu}} W_{-\frac{\nu}{2}, 1- \frac{\nu}{2}}\left( - \frac{|x|}{\lambda \sqrt{t^\nu}} \right)
\end{equation}
and has Fourier transform
\[ \int_{\mathbb{R}} e^{i \beta x} v_{\nu}(x, t) dx = E_{\nu, 1}\left( - \lambda^2 \beta^2 t^{\nu} \right). \]
Therefore, in view of \eqref{frenfracF} we have that
\begin{align}
u_{2\nu}(x,t) = & \frac{1}{\sqrt{2^3t^{\nu}}} \left[ \frac{1}{\sqrt{-i}}W_{-\frac{\nu}{2}, 1-\frac{\nu}{2}}\left( - \sqrt{-\frac{2}{i}} \frac{|x|}{\sqrt{t^\nu}} \right) + \frac{1}{\sqrt{i}} W_{-\frac{\nu}{2}, 1- \frac{\nu}{2}}\left( - \sqrt{\frac{2}{i}} \frac{|x|}{\sqrt{t^\nu}} \right) \right] \nonumber \\
= & \frac{1}{\sqrt{2^3t^{\nu}}} \left[ e^{i \frac{\pi}{4}} W_{-\frac{\nu}{2}, 1- \frac{\nu}{2}}\left( - \sqrt{2} \frac{|x|}{\sqrt{t^\nu}} e^{i \frac{\pi}{2}} \right)  + e^{-i \frac{\pi}{4}} W_{-\frac{\nu}{2}, 1- \frac{\nu}{2}}\left( - \sqrt{2} \frac{|x|}{\sqrt{t^\nu}} e^{-i\frac{\pi}{4}} \right) \right] \nonumber \\
= & \frac{1}{\sqrt{2^3t^{\nu}}} \sum_{m=0}^\infty \left[ e^{i \frac{\pi}{4}} \left( - \sqrt{2} \frac{|x| e^{i \frac{\pi}{4}}}{\sqrt{t^\nu}} \right)^{m} + e^{-i \frac{\pi}{4}} \left(- \sqrt{2}\frac{|x| e^{-i \frac{\pi}{4}}}{\sqrt{t^\nu}} \right)^{m} \right] \frac{1}{m! \Gamma\left(-\frac{\nu m}{2} + 1 - \frac{\nu}{2} \right)}\nonumber \\
= & \frac{1}{\sqrt{2^3t^{\nu}}} \sum_{m=0}^\infty \left(- \sqrt{2}\frac{|x|}{\sqrt{t^\nu}} \right)^{m}  \frac{e^{i \frac{\pi}{4} + i \frac{\pi}{4}m} + e^{-i \frac{\pi}{4} - i \frac{\pi}{4}m}}{m! \Gamma\left(-\frac{\nu m}{2} + 1 - \frac{\nu}{2} \right)}\nonumber\\
= & \frac{1}{\sqrt{2t^{\nu}}} \sum_{m=0}^\infty \left(- \sqrt{2}\frac{|x|}{\sqrt{t^\nu}} \right)^{m}  \frac{\cos \left( \frac{m+1}{4}\pi \right)}{m! \Gamma\left(-\frac{\nu m}{2} + 1 - \frac{\nu}{2} \right)}\nonumber\\
= & \frac{1}{\pi \sqrt{2t^{\nu}}} \sum_{m=0}^\infty \frac{1}{m!}\left(- \sqrt{2}\frac{|x|}{\sqrt{t^\nu}} \right)^{m}\cos \left( \frac{m+1}{4}\pi \right) \sin\left(\frac{m+1}{2}\pi \nu \right)\Gamma\left( \frac{m+1}{2}\nu \right). 
\end{align}
In the last step we applied the reflection formula
\[ \Gamma(z)\Gamma(1-z) = \frac{\pi}{\sin \pi z}. \]
\end{proof}

\begin{os}
\normalfont
We give now the Laplace transform of \eqref{sumsol}. Since 
\[ \int_0^\infty e^{-\mu t} t^{-\frac{\nu}{2}m - \frac{\nu}{2}} dt = \mu^{\frac{\nu}{2}m + \frac{\nu}{2} - 1} \Gamma\left(1-\frac{\nu}{2}m - \frac{\nu}{2} \right) \]
and
\[ \Gamma\left(1-\frac{\nu}{2}m - \frac{\nu}{2} \right) \Gamma\left(\frac{\nu}{2}m + \frac{\nu}{2} \right) = \frac{\pi}{\sin \frac{m+1}{2}\nu} \]
we extract from \eqref{sumsol} that
\begin{align}
\int_0^\infty e^{-\mu t} u_{2\nu}(x,t) dt = & \frac{1}{\sqrt{2}} \sum_{m=0}^{\infty} \frac{1}{m!}\left( -\sqrt{2} |x| \right)^m \mu^{\frac{\nu}{2}m + \frac{\nu}{2} - 1} \cos\left( \frac{m+1}{2}\pi \right)\nonumber \\
= & \frac{\mu^{\nu/2 -1}}{2\sqrt{2}} \sum_{m=0}^{\infty}  \frac{1}{m!}\left( -\sqrt{2} |x| \mu^{\nu/2} \right)^m \left( e^{i \frac{\pi}{4} + i \frac{m\pi}{4}} + e^{-i \frac{\pi}{4} - i \frac{m\pi}{4}} \right)\nonumber \\
= & \frac{\mu^{\nu/2 -1}}{2\sqrt{2}} \left( e^{i \frac{\pi}{4} - \sqrt{2} |x| e^{i\frac{\pi}{4}} \mu^{\nu/2}} + e^{-i \frac{\pi}{4} - \sqrt{2} |x| e^{-i\frac{\pi}{4}} \mu^{\nu/2}} \right)\nonumber \\
= & \frac{\mu^{\nu/2 -1} e^{-|x| \mu^{\nu/2}}}{2\sqrt{2}} \left( e^{i \frac{\pi}{4} - i |x| \mu^{\nu/2}} + e^{-i \frac{\pi}{4} -i |x| \mu^{\nu/2}} \right)\nonumber \\
= & \frac{\mu^{\nu/2 -1} e^{-|x| \mu^{\nu/2}}}{\sqrt{2}} \cos\left( |x| \mu^{\nu/2} - \frac{\pi}{4} \right). \label{fghR}
\end{align}
We now take the Fourier transform of \eqref{fghR} 
\begin{align*}
& \frac{\mu^{\frac{\nu}{2} - 1}}{2\sqrt{2}} \left[ e^{i \frac{\pi}{4}} \int_{+\infty}^{-\infty} e^{i\beta x} e^{-|x| \mu^{\nu/2} e^{i\frac{\pi}{4}} \sqrt{2}} + e^{-i \frac{\pi}{4}} \int_{+\infty}^{-\infty} e^{i\beta x} e^{-|x| \mu^{\nu/2} e^{-i\frac{\pi}{4}} \sqrt{2}} \right]\\
= & \mu^{\frac{\nu}{2} - 1} \left[ \frac{e^{i\frac{\pi}{2}} \mu^{\nu/2}}{\beta^2 + 2\mu^{\nu} e^{i \frac{\pi}{2}}} + \frac{e^{-i\frac{\pi}{2}} \mu^{\nu/2}}{\beta^2 + 2 \mu^{\nu} e^{-i\frac{\pi}{2}}} \right]\\
= & \mu^{\frac{\nu}{2} -1} \left[ \frac{i(\beta^2 -2 i \mu^\nu ) -i (\beta^2 +2i \mu^\nu)}{2^2 \mu^{2\nu} + \beta^4} \right] 
= \frac{\mu^{2\nu -1}}{\mu^{2\nu}+ \frac{\beta^4}{2^2}}
\end{align*}
and this confirms result \eqref{asdFr}. For a further check we evaluate the inverse Fourier transform of \eqref{asdFr}.
\begin{align*}
\frac{\mu^{2\nu -1}}{2\pi} \int_{-\infty}^{+\infty} \frac{e^{-i\beta x}\, d\beta}{\mu^{2\nu} + \frac{\beta^2}{2^2}} = & \frac{\mu^{\nu -1}}{2\pi} \left[ \int_{-\infty}^{+\infty} \frac{e^{-i\beta x}\, d\beta}{\mu^{\nu} + i \frac{\beta^2}{2}} + \int_{-\infty}^{+\infty} \frac{e^{-i\beta x}\, d\beta}{\mu^{\nu} - i \frac{\beta^2}{2}} \right]\\
= & \frac{\mu^{\nu -1}}{2^2 \pi} \Bigg[ \int_{-\infty}^{+\infty} e^{-i \beta x} \frac{\left( 2 e^{-i \frac{\pi}{2} \mu^{\nu}} \right)^{1/2}}{\left( 2 e^{-i \frac{\pi}{2} \mu^{\nu}} \right) + \beta^2} \frac{2}{e^{i \frac{\pi}{2}}} \frac{d\beta}{(2 e^{-i \frac{\pi}{2} \mu^{\nu}})^{1/2}} \\
 & +  \int_{-\infty}^{+\infty} e^{-i \beta x} \frac{\left( 2 e^{i \frac{\pi}{2} \mu^{\nu}} \right)^{1/2}}{\left( 2 e^{i \frac{\pi}{2} \mu^{\nu}} \right) + \beta^2} \frac{2}{e^{-i \frac{\pi}{2}}} \frac{d\beta}{(2 e^{i \frac{\pi}{2} \mu^{\nu}})^{1/2}} \Bigg]\\
= & \frac{\mu^{\nu -1}}{2 \sqrt{2}} \Bigg[ \frac{e^{-|x| (2e^{i\frac{\pi}{2}} \mu^{\nu})^{1/2} + i\frac{\pi}{2}}}{(2 e^{i \frac{\pi}{2}} \mu^{\nu})^{1/2}} + \frac{e^{-|x| (2e^{-i\frac{\pi}{2}} \mu^{\nu})^{1/2} - i\frac{\pi}{2}}}{(2 e^{-i \frac{\pi}{2}} \mu^{\nu})^{1/2}} \Bigg]\\
= & \frac{\mu^{\frac{\nu}{2} - 1}}{2 \sqrt{2}} \left[ e^{-|x| (1-i) \mu^{\nu/2} - i \frac{\pi}{4}} + e^{-|x|(1+i)\mu^{\nu/2} + i \frac{\pi}{4}} \right]\\
= & \frac{\mu^{\frac{\nu}{2} - 1} e^{-|x| \mu^{\nu/2}}}{\sqrt{2}} \frac{1}{2} \left[ e^{i|x|\mu^{\nu/2}-i \frac{\pi}{4}} + e^{-i |x| \mu^{\nu/2} + i \frac{\pi}{4}}  \right]\\
= & \frac{\mu^{\frac{\nu}{2} - 1} e^{-|x| \mu^{\nu/2}}}{\sqrt{2}} \cos \left(|x| \mu^{\nu/2} - \frac{\pi}{4} \right).
\end{align*}
\end{os}

\begin{te}
The solution to the fractional equation of vibrations of rods
\begin{equation}
\left\lbrace \begin{array}{l}
\frac{\partial^{2\nu} u}{\partial t^{2\nu}} = - \frac{\lambda^2}{2^2} \frac{\partial^4 u}{\partial x^4}, \quad x \in \mathbb{R}, \, t>0\\
u(x,0) = \delta(x), \quad 0 < \nu \leq 1\\
u_t(x,0)=0, \quad 1/2 < \nu \leq 1 \end{array} \right .
\label{pdetretre}
\end{equation} 
coincides with 
\begin{equation}
u_{2\nu}(x,t) = \int_0^\infty \frac{1}{\sqrt{2\pi s}} \cos \left( \frac{x^2}{2s} - \frac{\pi}{4} \right) v_{2\nu}(s, t)\, ds \label{frau2ni}
\end{equation}
where $v_{2\nu}$ is the folded solution of the fractional diffusion equation
\begin{equation}
\frac{\partial^{2\nu} u}{\partial t^{2\nu}}= \lambda^2 \frac{\partial^2 u}{\partial x^2} \label{fracdifeqP}
\end{equation}
subject to the initial conditions 
\begin{equation}
\begin{array}{l} u(x,0)=\delta(x), \quad 0 < \nu \leq 1,\\ u_t(x,0)=0, \quad 1/2 < \nu \leq 1. \end{array} \label{pdev}
\end{equation}
\label{tretre}
\end{te}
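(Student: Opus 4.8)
The plan is to prove \eqref{frau2ni} by comparing Laplace transforms in the time variable $t$. The formula \eqref{frau2ni} is a subordination identity --- the Fresnel fundamental solution \eqref{fun25}, viewed as a function of its time argument, ``time-changed'' by the folded fractional-diffusion density $v_{2\nu}$ --- and at the level of the $t$-Laplace transform all the integrals involved converge absolutely, which is not true for a naive Fourier-in-$x$ argument because the Fresnel kernel \eqref{fun25} is only conditionally integrable in $x$.

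\textbf{Step 1: the transform of $v_{2\nu}$.} From \eqref{fracdifeqP}--\eqref{pdev}, taking the Laplace transform in $t$ and the Fourier transform in space of $v_{2\nu}$ and applying the Dzhrbashyan--Caputo transformation rule --- where the splitting of the initial data at $\nu=1/2$ in \eqref{pdev} is precisely what governs the $u_t(x,0)$ term --- one obtains the double transform $\mu^{2\nu-1}/(\mu^{2\nu}+\lambda^2\xi^2)$, whose inversion in $\xi$ gives, for the folded density,
\[ \bar v_{2\nu}(s,\mu):=\int_0^\infty e^{-\mu t}v_{2\nu}(s,t)\,dt=\frac{\mu^{\nu-1}}{\lambda}\,e^{-\mu^{\nu}s/\lambda},\qquad s>0. \]
(When $\nu=1$ this is the $t$-Laplace transform of $\delta(\,\cdot\,-\lambda t)$, so \eqref{frau2ni} collapses to $u(x,\lambda t)$, as it must.)

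\textbf{Step 2: interchange, and the transform of the Fresnel kernel.} Applying $\int_0^\infty e^{-\mu t}(\cdot)\,dt$ to the right-hand side of \eqref{frau2ni} and using $|u(x,s)|\le(2\pi s)^{-1/2}$ together with Step 1, one has $\int_0^\infty |u(x,s)|\,\bar v_{2\nu}(s,\mu)\,ds<\infty$, so Fubini applies and the transform of the right-hand side is $\tfrac{\mu^{\nu-1}}{\lambda}\int_0^\infty e^{-\mu^{\nu}s/\lambda}\,u(x,s)\,ds$, i.e. a multiple of the Laplace transform of the Fresnel fundamental solution in its own time variable. That transform is evaluated by the complex-Gaussian manipulation already used in \eqref{funQW} and \eqref{jkl}: writing $\cos(\tfrac{x^2}{2s}-\tfrac\pi4)=\tfrac12\big(e^{-i\pi/4}e^{ix^2/(2s)}+e^{i\pi/4}e^{-ix^2/(2s)}\big)$ and using $\int_0^\infty e^{-\theta s}\tfrac{e^{-c^2/(2s)}}{\sqrt{2\pi s}}\,ds=\tfrac{1}{\sqrt{2\theta}}e^{-|c|\sqrt{2\theta}}$ with $c^2=\mp i x^2$ gives
\[ \int_0^\infty e^{-\theta s}u(x,s)\,ds=\frac{e^{-|x|\sqrt\theta}}{\sqrt{2\theta}}\cos\!\Big(|x|\sqrt\theta-\frac\pi4\Big). \]
Setting $\theta=\mu^{\nu}/\lambda$ and collecting the prefactors, the $t$-Laplace transform of the right-hand side of \eqref{frau2ni} is $\tfrac{\mu^{\nu/2-1}}{\sqrt{2\lambda}}\,e^{-|x|\mu^{\nu/2}/\sqrt\lambda}\cos(|x|\mu^{\nu/2}/\sqrt\lambda-\tfrac\pi4)$, which --- for $\lambda=1$ --- is exactly \eqref{fghR}, the Laplace transform of the solution of the corresponding Cauchy problem, the general-$\lambda$ case being the same computation with $\lambda$ carried along (equivalently, a further Fourier transform in $x$ recovers the $\lambda$-version of \eqref{asdFr}, $\mu^{2\nu-1}/(\mu^{2\nu}+\lambda^2\beta^4/2^2)$). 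By injectivity of the Laplace transform, \eqref{frau2ni} holds; the initial conditions of \eqref{pdetretre} are those encoded in this transform and can also be checked directly from $v_{2\nu}(\,\cdot\,,0^+)=\delta$ and the normalization $\int_{\mathbb R}u(x,s)\,dx=1$.

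The two transform evaluations (Steps 1 and 2) are routine --- each is an instance of an identity already appearing in the paper. The point that genuinely requires care is the interchange of the two integrations: since the Fresnel ``density'' \eqref{fun25} is not in $L^1(\mathbb R)$, a direct Fourier-in-$x$ subordination argument is not immediately legitimate, whereas carrying out the comparison through the $t$-Laplace transform supplies the weight $e^{-\mu^{\nu}s/\lambda}$ that makes everything absolutely convergent. The endpoint $\nu=1$ (where $v_{2\nu}$ is a point mass) and the two regimes $0<\nu\le1/2$ and $1/2<\nu\le1$ (where the number of required initial conditions changes) are all handled uniformly once the correct Caputo transform rule is used, which is exactly why the hypotheses on the data in \eqref{pdetretre} and \eqref{pdev} are split at $\nu=1/2$.
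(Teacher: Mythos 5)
Your proof is correct, but it takes a genuinely different route from the paper's. The paper proves Theorem \ref{tretre} by direct verification: it applies the Caputo derivative $\partial_t^{2\nu}$ under the integral sign in \eqref{frau2ni}, uses the governing equation \eqref{fracdifeqP} to replace $\partial_t^{2\nu}v_{2\nu}$ by $\lambda^2\partial_s^2 v_{2\nu}$, integrates by parts twice to throw $\partial_s^2$ onto the Fresnel kernel, and then invokes \eqref{eq1} to convert that into $-\tfrac{1}{2^2}\partial_x^4$ of the kernel; a Fourier-transform computation ending in $E_{2\nu,1}(\lambda^2\beta^4 t^{2\nu}/2^2)$ serves as a consistency check. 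You instead identify both sides through their $t$-Laplace transforms, matching the subordination integral against \eqref{fghR} (equivalently the $\lambda$-version of \eqref{asdFr}) and concluding by injectivity. What your route buys is absolute convergence everywhere: the weight $e^{-\mu^{\nu}s/\lambda}$ legitimizes the interchange of integrals, and the Caputo Laplace-transform rule makes the role of the split initial data at $\nu=1/2$ explicit --- points the paper's argument glosses over (it does not discuss the boundary terms at $s=0$ and $s=\infty$ in its double integration by parts, nor the interchange of $\partial_t^{2\nu}$ with the $s$-integral). What the paper's route buys is that it exhibits the mechanism directly (time-fractional derivative of the subordinator traded for a spatial derivative, then for the Fresnel operator), without appealing to the prior computation of \eqref{fghR}. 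Two small points of care in your write-up: the Fubini bound $\int_0^\infty|u(x,s)|\,\bar v_{2\nu}(s,\mu)\,ds<\infty$ implicitly uses the non-negativity of the folded Wright density $v_{2\nu}$ (true for $0<2\nu\le 2$, but worth stating); and in the complex-Gaussian formula the quantity written as $|c|$ must be read as the branch of $\sqrt{c^2}$ with positive real part, i.e. $|x|e^{\mp i\pi/4}$ for $c^2=\mp ix^2$ --- with that reading your closed form for $\int_0^\infty e^{-\theta s}u(x,s)\,ds$ is exactly right and reproduces \eqref{fghR}.
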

\begin{proof}
The fractional derivative of order $2\nu$ of \eqref{frau2ni} yields
\begin{align*}
\frac{\partial^{2\nu} u_{2\nu}}{\partial t^{2\nu}} = & \int_0^\infty \frac{1}{\sqrt{2\pi s}} \cos \left( \frac{x^2}{2s} - \frac{\pi}{4} \right) \frac{\partial^{2\nu} v_{2\nu}}{\partial t^{2\nu}}(s, t)\, ds\\
= & [\textrm{by } \eqref{pdev}] =  \lambda^2 \int_0^\infty \frac{1}{\sqrt{2\pi s}} \cos \left( \frac{x^2}{2s} - \frac{\pi}{4} \right) \frac{\partial^{2} v_{2\nu}}{\partial s^2}(s, t)\, ds\\
= & \lambda^2 \int_0^\infty \frac{\partial^{2} }{\partial s^2} \left[ \frac{1}{\sqrt{2\pi s}} \cos \left( \frac{x^2}{2s} - \frac{\pi}{4} \right)\right] v_{2\nu}(s, t)\, ds = [\textrm{by } \eqref{eq1}] = - \frac{\lambda^2}{2^2} \frac{\partial^4 u_{2\nu}}{\partial x^4}.
\end{align*}
We note that
\begin{align*}
\int_{\mathbb{R}} e^{i \beta x} u_{2\nu}(x,t) dx = & \left[ \textrm{by } \eqref{frau2ni} \right] = \int_0^\infty \cos \frac{\beta^2 s}{2}\, v_{2\nu}(s, t)\, ds\\
= & \int_{\mathbb{R}} e^{i \frac{\beta^2 s}{2}} v_{2\nu}(s, t)\, ds =  E_{2\nu, 1}\left( \frac{\lambda^2 \beta^4}{2^2}t \right).
\end{align*}
\end{proof}

\begin{os}
\normalfont
Theorem \ref{tretre} is the counterpart of result (2.2) for fractional diffusions treated in \cite{OB09} where the role of the Gaussian density is here played by the Fresnel function \eqref{funQW}. In some cases this is extremely fruitful because it permits writing down the fundamental solution explicitely. For $\nu=1/3$, from \eqref{pdetretre} we obtain that 
\begin{align}
u_{2/3}(x,t) = & \int_0^\infty \frac{1}{\sqrt{2 \pi s}} \cos \left( \frac{x^2}{2s} - \frac{\pi}{4} \right) v_{2/3}(s, t)ds \label{intermstep}\\
= & \int_0^\infty \frac{1}{\sqrt{2 \pi s}} \cos \left( \frac{x^2}{2s} - \frac{\pi}{4} \right) \frac{3}{\lambda} \frac{1}{\sqrt[3]{3t}} Ai\left( \frac{s}{\lambda \sqrt[3]{3t}} \right) \, ds. \nonumber
\end{align} 
In the intermediate step of \eqref{intermstep} we employ the result
\begin{equation}
\left\lbrace \begin{array}{l} \frac{\partial^{2/3} u}{\partial t^{2/3}} = \lambda^2 \frac{\partial^2 u}{\partial x^2}, \quad x \in \mathbb{R},\, t>0\\ u(x,0) = \delta(x) \end{array} \right .
\end{equation}
whose solution is (see \cite[formula (4.2)]{OB09})
\[v_{2/3}(x,t) = \frac{3}{2 \lambda} \frac{1}{\sqrt[3]{3t}} Ai\left( \frac{|x|}{\lambda \sqrt[3]{3t}} \right), \quad x \in \mathbb{R},\, t>0. \]
Formula \eqref{intermstep} is an integral version of \eqref{sumsol} for $\nu=1/3$. The relationship \eqref{frau2ni} relates the solutions of the fractional equation  of rods \eqref{pdetretre} with the fractional diffusion equation \eqref{fracdifeqP} of which a vast literature exists.
\end{os}

\begin{figure}[ht]
\centering
\includegraphics[scale=0.5]{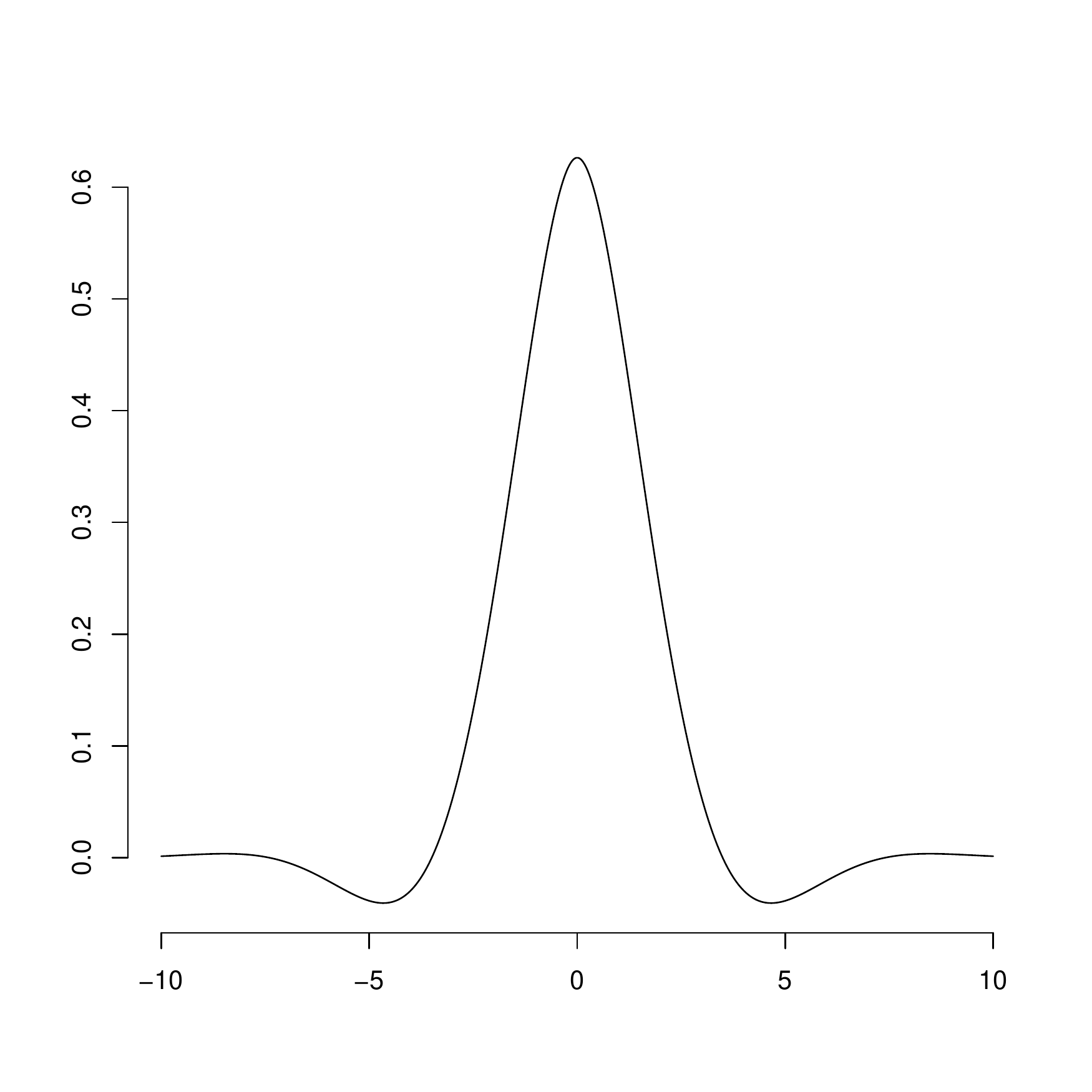}
\caption{The fundamental solution $u_{2/3}$ of the time-fractional Fresnel equation. }
\end{figure}

\subsection{Special cases}
We now examine some special cases.
\begin{os}
\normalfont
For $\nu=1$, we are able to extract from \eqref{sumsol} the fundamental solution \eqref{funQW}. For $n \in \mathbb{N}$, we consider in \eqref{sumsol} the cases $m=2n+1$ (for which the series is equal to zero) and $m=2n$ for which we have that
\begin{align*}
u_{2}(x,t) = & \frac{1}{\pi \sqrt{2t}} \sum_{m=0}^\infty \frac{1}{m!}\left(- \sqrt{2}\frac{|x|}{t^{1/2}} \right)^{m}\cos \left( \frac{m+1}{4}\pi \right) \sin\left(\frac{m+1}{2} \pi \right)\Gamma\left( \frac{m+1}{2} \right)\\
= & \frac{1}{\pi \sqrt{2t}} \sum_{n=0}^\infty \frac{1}{(2n)!}\left(2 \frac{|x|^2}{t} \right)^{n}\cos \left( \frac{2n+1}{4}\pi \right) \sin\left(\frac{2n+1}{2} \pi \right)\Gamma\left( \frac{2n+1}{2} \right)\\
= & \frac{1}{\pi \sqrt{2t}} \sum_{n=0}^\infty \frac{(-1)^n}{(2n)!}\left(2 \frac{|x|^2}{t} \right)^{n}\cos \left( \frac{2n+1}{4}\pi \right)  \Gamma\left( n + \frac{1}{2} \right)\\
= & \frac{2}{\sqrt{2\pi t}} \sum_{n=0}^\infty \frac{(-1)^n}{(2n)!}\left(\frac{|x|^2}{2t} \right)^{n}\cos \left( \frac{n}{2}\pi + \frac{\pi}{4} \right)  \frac{\Gamma(2n)}{\Gamma(n)}\\
= & \frac{1}{\sqrt{2\pi t}} \sum_{n=0}^\infty \frac{(-1)^n}{n!}\left(\frac{|x|^2}{2t} \right)^{n} \frac{1}{2} \left( e^{i\frac{n}{2}\pi + i\frac{\pi}{4}} + e^{-i\frac{n}{2}\pi - i\frac{\pi}{4}} \right) \\
= & \frac{1}{2\sqrt{2\pi t}} \sum_{n=0}^\infty \frac{1}{n!} \left[ \left(- i \frac{|x|^2}{2t} \right)^{n} e^{i\frac{\pi}{4}} + \left(i \frac{|x|^2}{2t} \right)^{n} e^{- i\frac{\pi}{4}} \right] \\
= & \frac{1}{2\sqrt{2\pi t}} \left( e^{- i \frac{|x|^2}{2t} + i\frac{\pi}{4}} + e^{+ i \frac{|x|^2}{2t} - i\frac{\pi}{4}} \right) =  \frac{1}{\sqrt{2\pi t}} \cos\left( \frac{x^2}{2t} - \frac{\pi}{4} \right).
\end{align*}
\end{os}

\begin{os}
\normalfont
Another important case is $\nu=1/2$ and in this case we easily arrive at
\begin{align}
u_1(x,t) = & \frac{1}{2\pi} \int_{-\infty}^{+\infty} e^{-\frac{y^4 t}{2^2}} \cos\left(xy \right)dy \label{int4}\\
= & \frac{1}{2\pi \sqrt{2t^{1/2}}} \sum_{k=0}^{\infty} \frac{1}{(2k)!} \left( - \frac{\sqrt{2}|x|}{t^{1/4}} \right)^{2k} (-1)^k \Gamma\left( \frac{k}{2} + \frac{1}{4} \right). \nonumber
\end{align}
The integral \eqref{int4} had been studied by Bernstein \cite{Bernst19} and its connection with the fundamental solution of the biquadratic heat equation is well-known. By applying the multiplication formula of the Gamma function
\[ \Gamma(z)\Gamma\left( z + \frac{1}{m} \right)\Gamma\left( z + \frac{2}{m} \right) \ldots \Gamma\left( z + \frac{m - 1}{m} \right) = (2\pi)^{\frac{m-1}{2}} m^{\frac{1}{2} - m z} \Gamma(m z). \]
for $m=4$ and $z=k/2$, we have that
\begin{align*}
\frac{\Gamma\left( \frac{k}{2} + \frac{1}{4} \right)}{\Gamma\left( 2k + 1 \right)} = & \frac{\pi 2^{\frac{1}{2} - 3k}}{k!\, \Gamma\left( \frac{k}{2} + \frac{3}{4} \right)}.
\end{align*}
This leads to the alternative form
\begin{equation}
u_1(x,t) = \frac{1}{2 \sqrt[4]{t}} \sum_{k=0}^{\infty} \left( \frac{|x|}{2 \sqrt[4]{t}} \right)^{2k} \frac{(-1)^k}{k!\, \Gamma\left( \frac{k}{2} + \frac{3}{4} \right)}.
\end{equation}
\end{os}

Particular attention is drawn to the case $\nu=2/3$ for which we produce different alternative expressions of the solution in terms of Airy functions.

\begin{te}
The solution to
\begin{equation}
\frac{\partial^{4/3} u}{\partial t^{4/3}} = - \frac{1}{2^2} \frac{\partial^4 u}{\partial x^4}, \quad  x \in \mathbb{R},\, t>0
\end{equation}
with 
\begin{equation}
\left\lbrace \begin{array}{ll} u(x, 0) = \delta(x)\\ u_t(x, 0) = 0 \end{array} \right .
\end{equation}
has the form
\begin{equation}
u_{4/3}(x,t)=  \frac{3}{2^{3/2} \sqrt[3]{3t}} \left[ e^{i \frac{\pi}{4}} Ai\left( \sqrt{2} \frac{|x| e^{i \frac{\pi}{4}}}{\sqrt[3]{3t}} \right) + e^{-i \frac{\pi}{4}} Ai\left( \sqrt{2} \frac{|x| e^{-i \frac{\pi}{4}}}{\sqrt[3]{3t}} \right)  \right]  \label{quattroterzi}
\end{equation}
where
\begin{equation}
Ai(x) = \frac{1}{\pi} \int_0^\infty \cos\left( \alpha x + \frac{\alpha^3}{3} \right) d\alpha = \frac{\sqrt{x}}{3} \left[ I_{-\frac{1}{3}}\left( \frac{2}{3} x^{3/2} \right) - I_{\frac{1}{3}}\left( \frac{2}{3} x^{3/2} \right) \right] \label{second427}
\end{equation}
is the Airy function (the second formula in \eqref{second427} holds for $x>0$).
\end{te}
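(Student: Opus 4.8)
The plan is to specialise to $\nu=2/3$ the Fourier representation of the solution established above and to invert it term by term. By the Fourier-transform theorem yielding \eqref{frenfracF}, applied with time-order $2\nu=4/3$, the solution of the stated Cauchy problem satisfies
\[
\int_{-\infty}^{+\infty}e^{i\beta x}u_{4/3}(x,t)\,dx=\frac12\left[E_{2/3,1}\!\left(i\frac{\beta^{2}t^{2/3}}{2}\right)+E_{2/3,1}\!\left(-i\frac{\beta^{2}t^{2/3}}{2}\right)\right].
\]
Each Mittag--Leffler term is the Fourier transform of the folded fundamental solution $v_{2/3}$ of the time-fractional diffusion equation $\partial_t^{2/3}v=\lambda^{2}\partial_x^{2}v$, $v(x,0)=\delta(x)$, whose transform (see the proof leading to \eqref{sumsol}) is $E_{2/3,1}(-\lambda^{2}\beta^{2}t^{2/3})$; matching the arguments forces $\lambda^{2}=-i/2$ in the first summand and $\lambda^{2}=+i/2$ in the second. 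Recalling from the remark above (taken from \cite{OB09}) that $v_{2/3}(x,t)=\frac{3}{2\lambda\sqrt[3]{3t}}\,Ai\!\left(\frac{|x|}{\lambda\sqrt[3]{3t}}\right)$, and taking the principal branch $\lambda=\frac{1}{\sqrt2}e^{\mp i\pi/4}$, so that $\frac1\lambda=\sqrt2\,e^{\pm i\pi/4}$ and $\frac{3}{2\lambda\sqrt[3]{3t}}=\frac{3\sqrt2}{2\sqrt[3]{3t}}\,e^{\pm i\pi/4}$, I would substitute into $u_{4/3}=\tfrac12\big[v_{2/3}^{(1)}+v_{2/3}^{(2)}\big]$ and collect the constant $\tfrac12\cdot\tfrac{3\sqrt2}{2}=\tfrac{3}{2^{3/2}}$; this yields \eqref{quattroterzi} directly, the two summands being complex conjugates so that $u_{4/3}$ is real.

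The only step that is not mere bookkeeping of phases is the passage to the imaginary diffusivities $\lambda^{2}=\mp i/2$: one must check that the representation of $v_{2/3}$, derived for $\lambda^{2}>0$, persists as $\arg\lambda^{2}\to\pm\pi/2$. This is legitimate because for $\arg z$ in $(-\pi/3,\pi/3)$ the Airy function decays exponentially, so along the relevant rays $\arg z=\pm\pi/4$ the function $x\mapsto\frac{3}{2\lambda\sqrt[3]{3t}}Ai\!\left(\frac{|x|}{\lambda\sqrt[3]{3t}}\right)$ lies in $L^{1}(\mathbb{R})$ and depends analytically on $\lambda^{2}$, as does $\beta\mapsto E_{2/3,1}(-\lambda^{2}\beta^{2}t^{2/3})$; hence the Fourier-inversion identity valid on the positive real $\lambda^{2}$-axis extends to the boundary rays. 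An equivalent route that sidesteps this continuation starts from the Wright-function form of the solution obtained in the proof of \eqref{sumsol}, which for $\nu=2/3$ reads
\[
u_{4/3}(x,t)=\frac{1}{2\sqrt2\,t^{1/3}}\left[e^{i\pi/4}W_{-\frac13,\frac23}\!\left(-\sqrt2\,\frac{|x|}{t^{1/3}}e^{i\pi/4}\right)+e^{-i\pi/4}W_{-\frac13,\frac23}\!\left(-\sqrt2\,\frac{|x|}{t^{1/3}}e^{-i\pi/4}\right)\right],
\]
and invokes the classical identity $W_{-\frac13,\frac23}(-z)=3^{2/3}Ai(3^{-1/3}z)$. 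Since both sides are entire, this identity follows by comparing Maclaurin coefficients: the coefficients with $k\equiv2\pmod{3}$ vanish on each side (a pole of the Gamma function in the denominator on the left, $a_{2}=0$ from $Ai''=zAi$ on the right), while both coefficient sequences obey the three-term recurrence $c_{k+2}=\frac{c_{k-1}}{3(k+1)(k+2)}$ — on the left via the functional equation $\Gamma(w+1)=w\Gamma(w)$, on the right via the Airy equation — with the matching initial values $c_{0}=1/\Gamma(2/3)$ and $c_{1}=-1/\Gamma(1/3)$, forced by $Ai(0)=\frac1{3^{2/3}\Gamma(2/3)}$, $Ai'(0)=-\frac1{3^{1/3}\Gamma(1/3)}$.

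Either way one reaches \eqref{quattroterzi} after the single simplification $3^{2/3}/(2\sqrt2)=3/(2^{3/2}\sqrt[3]{3})$. The main obstacle I anticipate is precisely this complex-parameter continuation in the first route (or, symmetrically, the verification of the $W$--$Ai$ identity in the second); by contrast, the reduction of the Airy integral to the modified Bessel functions $I_{\pm1/3}$ in the second line of \eqref{second427}, valid for $x>0$, is the standard one, obtained by grouping the $k\equiv0$ and $k\equiv1\pmod{3}$ parts of the Airy series and matching them with the series of $I_{-1/3}$ and $I_{1/3}$, and needs no new argument.
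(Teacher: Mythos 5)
Your proposal is correct, and it reaches \eqref{quattroterzi} by a route that is organized differently from the paper's. The paper's proof specializes the series \eqref{sumsol} to $\nu=2/3$, splits the sum over the residues of $m$ modulo $3$ (the $m\equiv 2$ part vanishing), applies the Gamma triplication formula to each of the two remaining subseries so as to recognize modified Bessel functions $I_{\pm 1/3}$ of the complex arguments $2\bigl(\sqrt{2}|x|e^{\pm i\pi/4}/(3\sqrt[3]{t})\bigr)^{3/2}$, and then assembles the Airy function from the representation $Ai(z)=\frac{\sqrt{z}}{3}\bigl[I_{-1/3}\bigl(\tfrac23 z^{3/2}\bigr)-I_{1/3}\bigl(\tfrac23 z^{3/2}\bigr)\bigr]$. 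Your second route compresses exactly that computation into the single identity $W_{-1/3,2/3}(-z)=3^{2/3}Ai(3^{-1/3}z)$, applied to the Wright-function form that already appears as an intermediate step in the paper's derivation of \eqref{sumsol}; your verification of the identity by matching the vanishing coefficients $k\equiv 2 \pmod 3$ and the three-term recurrence $c_{k+2}=c_{k-1}/(3(k+1)(k+2))$ with the initial data $Ai(0)$, $Ai'(0)$ is correct and, to my mind, cleaner than the triplication-formula manipulations, while being mathematically equivalent to them. Your first route --- writing $u_{4/3}$ as the half-sum of two fractional diffusions of order $2/3$ with imaginary diffusivities $\lambda^2=\mp i/2$ --- is precisely the content of the paper's own Remark following the theorem (formula \eqref{relsolOrs}), which the paper states without justifying the passage to complex $\lambda^2$; you supply that justification (exponential decay of $Ai$ in the sector $|\arg z|<\pi/3$ gives integrability and analytic dependence along the rays $\arg z=\pm\pi/4$), and this is the one step in your argument that would deserve a slightly more careful statement (uniform bounds on a neighborhood of the rays so that the Fourier-inversion identity, known for $\lambda^2>0$, propagates by analytic continuation). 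All the bookkeeping of phases and constants in your write-up checks out, including $3^{2/3}/(2\sqrt2)=3/(2^{3/2}\sqrt[3]{3})$.
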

\begin{proof}
For $\nu=2/3$, formula \eqref{sumsol} yields
\begin{equation}
u_{4/3}(x,t) = \frac{1}{\pi \sqrt{2t^{2/3}}} \sum_{m=0}^{\infty} \frac{ \left( -\sqrt{2} \frac{|x|}{\sqrt[3]{t}} \right)^{m}}{m!} \cos \left( \frac{m+1}{4}\pi \right) \sin \left( \frac{m+1}{3}\pi \right) \Gamma\left( \frac{m+1}{3} \right). \label{sumTMl}
\end{equation}
We split the series \eqref{sumTMl} into the following three cases:
\begin{equation}
\left\lbrace 
\begin{array}{l}
m=3k+2,\\
m=3k+1,\\
m=3k.
\end{array}
\right .
\end{equation}
For our convenience we write 
\[ u_{4/3}(x,t) = A_{2} + A_{1} + A_{0}. \]
For $m=2k+2$ the series $A_{2}$ equals to zero because 
\[ \sin\left( \frac{m+1}{3}\pi \right) = \sin \left( (k+1)\pi \right)=0 \]
for all integer values of $k$. For $n=3k+1$, formula \eqref{sumsol} takes the form: 
\begin{align}
& \frac{1}{\pi \sqrt{2} \sqrt[3]{t}} \sum_{k=0}^{\infty} \frac{1}{(3k+1)!} \left( -\sqrt{2} \frac{|x|}{\sqrt[3]{t}} \right)^{3k+1}  \cos\left( \frac{3k+2}{4}\pi \right) \sin\left( \frac{3k+2}{3}\pi \right) \Gamma\left( k + \frac{2}{3} \right)\nonumber \\
= &  \frac{1}{\pi \sqrt{2} \sqrt[3]{t}} \sum_{k=0}^{\infty} \frac{1}{(3k+1)!} \left( -\sqrt{2} \frac{|x|}{\sqrt[3]{t}} \right)^{3k+1} \left( - \sin \frac{3k}{4}\pi \right) (-1)^k \sin \frac{2}{3}\pi \, \Gamma\left( k + \frac{2}{3} \right). \label{serTrip}
\end{align}
In view of the triplication formula (see \cite[page 14]{LE})
\[ \Gamma(z) \Gamma\left( z+ \frac{1}{3} \right) \Gamma\left( z+ \frac{2}{3} \right) =  \frac{2\pi}{3^{3z-1/2}} \Gamma(3z) \]
we have that
\begin{equation}
\frac{\Gamma\left( k + \frac{2}{3} \right)}{\Gamma\left( 3k+2\right)} =  \frac{1}{3^2} \frac{2\pi}{3^{3k-1/2}} \frac{1}{\Gamma(k+1) \Gamma\left(k + \frac{1}{3} + 1\right)}. \label{trip}
\end{equation}
The series \eqref{serTrip}, in force of \eqref{trip}, becomes
\begin{align}
A_{1} = & \frac{2\pi \sin \frac{2\pi}{3}}{\pi \sqrt{2} \sqrt[3]{t}} \frac{1}{3^2} \sum_{k=0}^{\infty} \left( \sqrt{2} \frac{|x|}{\sqrt[3]{t}} \right)^{3k+1} \sin \left( \frac{3k}{4}\pi \right) \frac{1}{3^{3k -1/2}} \frac{1}{\Gamma(k+1) \Gamma\left( k+\frac{1}{3} + 1 \right)}\nonumber \\
= & \frac{1}{\sqrt{2} \sqrt[3]{t}} \sum_{k=0}^{\infty} \frac{1}{k!} \left( \sqrt{2} \frac{|x|}{3 \sqrt[3]{t}} \right)^{3k+1} \frac{1}{\Gamma\left( k + \frac{1}{3} + 1 \right)} \frac{1}{2i} \left[ e^{i \frac{3k}{4}\pi} - e^{-i \frac{3k}{4}\pi}  \right]\nonumber \\
= & \frac{1}{2i\,\sqrt{2} \sqrt[3]{t}} \sum_{k=0}^{\infty} \left[ e^{-i \frac{\pi}{4}}  \left(\sqrt{2} \frac{|x| e^{i\frac{\pi}{4}}}{3\sqrt[3]{t}} \right)^{3k+1} - e^{i \frac{\pi}{4}}  \left(\sqrt{2} \frac{|x| e^{-i\frac{\pi}{4}}}{3\sqrt[3]{t}} \right)^{3k+1} \right] \frac{1}{k! \Gamma\left( k + \frac{1}{3} +1 \right)} \nonumber \\
= & \frac{1}{2i\,\sqrt{2} \sqrt[3]{t}} \left[ e^{-i \frac{\pi}{4}} \left( \sqrt{2} \frac{|x| e^{i \frac{\pi}{4}}}{3\sqrt[3]{t}} \right)^{1/2} \sum_{k=0}^{\infty} \left(\left( \sqrt{2} \frac{|x| e^{i \frac{\pi}{4}}}{3\sqrt[3]{t}} \right)^{3/2} \right)^{2k+\frac{1}{3}} \frac{1}{k! \Gamma\left( k + \frac{1}{3} +1 \right)} \right .\nonumber \\
& \left .-  e^{i \frac{\pi}{4}}  \left( \sqrt{2} \frac{|x| e^{-i \frac{\pi}{4}}}{3\sqrt[3]{t}} \right)^{1/2} \sum_{k=0}^{\infty} \left( \left( \sqrt{2} \frac{|x| e^{-i \frac{\pi}{4}}}{3\sqrt[3]{t}} \right)^{3/2} \right)^{2k + \frac{1}{3}} \frac{1}{k! \Gamma\left( k + \frac{1}{3} +1 \right)} \right] \nonumber \\
= & \frac{1}{2i\,\sqrt{2} \sqrt[3]{t}} \left[ e^{-i \frac{\pi}{4}} \left( \sqrt{2} \frac{|x| e^{i \frac{\pi}{4}}}{3\sqrt[3]{t}} \right)^{1/2} I_{1/3}\left(2 \left( \sqrt{2} \frac{|x| e^{i \frac{\pi}{4}}}{3\sqrt[3]{t}} \right)^{3/2} \right)  \right .\nonumber \\
& \left .-  e^{i \frac{\pi}{4}} \left( \sqrt{2} \frac{|x| e^{-i \frac{\pi}{4}}}{3\sqrt[3]{t}} \right)^{1/2} I_{1/3}\left(2 \left( \sqrt{2} \frac{|x| e^{-i \frac{\pi}{4}}}{3\sqrt[3]{t}} \right)^{3/2} \right) \right] \nonumber \\
= & \frac{1}{2\,\sqrt{2} \sqrt[3]{t}} \left[ -e^{i \frac{\pi}{4}} \left( \sqrt{2} \frac{|x| e^{i \frac{\pi}{4}}}{3\sqrt[3]{t}} \right)^{1/2} I_{1/3}\left(2 \left( \sqrt{2} \frac{|x| e^{i \frac{\pi}{4}}}{3\sqrt[3]{t}} \right)^{3/2} \right)  \right .\nonumber \\
& \left .-  e^{-i \frac{\pi}{4}} \left( \sqrt{2} \frac{|x| e^{-i \frac{\pi}{4}}}{3\sqrt[3]{t}} \right)^{1/2} I_{1/3}\left(2 \left( \sqrt{2} \frac{|x| e^{-i \frac{\pi}{4}}}{3\sqrt[3]{t}} \right)^{3/2} \right) \right]. \label{secondcase}
\end{align}
With similar calculation we evaluate $A_{3k}$
\begin{align}
A_{0} = & \frac{1}{\pi \sqrt{2}\sqrt[3]{t}} \sum_{k=0}^{\infty} \frac{1}{(3k)!} \left( -\sqrt{2} \frac{|x|}{\sqrt[3]{t}} \right)^{3k} \cos\left( \frac{3k+1}{4}\pi \right) \sin \left( \frac{3k+1}{3}\pi \right) \Gamma\left( \frac{3k+1}{3} \right)\nonumber \\
= & \frac{3^{1/2}}{\pi 2^{3/2} \sqrt[3]{t}} \sum_{k=0}^{\infty} \frac{1}{(3k)!}\left( \sqrt{2} \frac{|x|}{\sqrt[3]{t}} \right)^{3k} \cos\left( \frac{3k+1}{4}\pi \right) \Gamma\left( k + \frac{1}{3} \right)\nonumber \\
= & \frac{3^{1/2}}{2^{1/2}\sqrt[3]{t}} \sum_{k=0}^{\infty} \frac{1}{k!} \left( \sqrt{2} \frac{|x|}{\sqrt[3]{t}} \right)^{3k}\cos\left( \frac{3k+1}{4}\pi \right) \frac{1}{3^{3k + 1/2} \Gamma\left( k - \frac{1}{3} + 1 \right)}\nonumber \\
= & \frac{1}{2^{3/2}\sqrt[3]{t}} \sum_{k=0}^{\infty} \left[ e^{i \frac{\pi}{4}} \left( \sqrt{2} \frac{|x| e^{i \frac{\pi}{4}}}{3 \sqrt[3]{t}} \right)^{3k} + e^{-i \frac{\pi}{4}} \left( \sqrt{2} \frac{|x| e^{-i \frac{\pi}{4}}}{3 \sqrt[3]{t}} \right)^{3k} \right] \frac{1}{k! \Gamma\left( k - \frac{1}{3} + 1 \right)}\nonumber \\
= & \frac{1}{2^{3/2}\sqrt[3]{t}} \left[ e^{i \frac{\pi}{4}} \left( \sqrt{2} \frac{|x| e^{i \frac{\pi}{4}}}{3 \sqrt[3]{t}} \right)^{1/2} \sum_{k=0}^{\infty} \left( \left( \sqrt{2} \frac{|x| e^{i \frac{\pi}{4}}}{3 \sqrt[3]{t}} \right)^{3/2} \right)^{2k - \frac{1}{3}} \frac{1}{k! \Gamma\left( k - \frac{1}{3} + 1 \right)}  \right .\nonumber \\
& + \left .e^{-i \frac{\pi}{4}} \left( \sqrt{2} \frac{|x| e^{-i \frac{\pi}{4}}}{3 \sqrt[3]{t}} \right)^{1/2} \sum_{k=0}^{\infty} \left( 2 \left( \sqrt{2} \frac{|x| e^{-i \frac{\pi}{4}}}{3 \sqrt[3]{t}} \right)^{3/2} \right)^{2k - \frac{1}{3}}\frac{1}{k! \Gamma\left( k - \frac{1}{3} + 1 \right)} \right] \nonumber \\
= & \frac{1}{2^{3/2}\sqrt[3]{t}} \left[ e^{i \frac{\pi}{4}} \left( \sqrt{2} \frac{|x| e^{i \frac{\pi}{4}}}{3 \sqrt[3]{t}} \right)^{1/2} I_{-1/3}\left( 2 \left( \sqrt{2} \frac{|x| e^{i \frac{\pi}{4}}}{3 \sqrt[3]{t}} \right)^{3/2} \right)  \right .\nonumber \\
& + \left .e^{-i \frac{\pi}{4}} \left( \sqrt{2} \frac{|x| e^{-i \frac{\pi}{4}}}{3 \sqrt[3]{t}} \right)^{1/2} I_{-1/3} \left( 2 \left( \sqrt{2} \frac{|x| e^{-i \frac{\pi}{4}}}{3 \sqrt[3]{t}} \right)^{3/2} \right) \right]. \label{thirdcase}
\end{align}
By summing up $A_{3k+1}$ and $A_{3k}$ we have 
\begin{align*}
u_{4/3}(x,t) =  & \frac{1}{2^{3/2}\sqrt[3]{t}} \left[ e^{i \frac{\pi}{4}} \left( \sqrt{2} \frac{|x| e^{i \frac{\pi}{4}}}{3 \sqrt[3]{t}} \right)^{1/2} I_{-1/3}\left( 2 \left( \sqrt{2} \frac{|x| e^{i \frac{\pi}{4}}}{3 \sqrt[3]{t}} \right)^{3/2} \right)  \right .\nonumber \\
& + e^{-i \frac{\pi}{4}} \left( \sqrt{2} \frac{|x| e^{-i \frac{\pi}{4}}}{3 \sqrt[3]{t}} \right)^{1/2} I_{-1/3} \left( 2 \left( \sqrt{2} \frac{|x| e^{-i \frac{\pi}{4}}}{3 \sqrt[3]{t}} \right)^{3/2} \right) \\
& -e^{i \frac{\pi}{4}} \left( \sqrt{2} \frac{|x| e^{i \frac{\pi}{4}}}{3\sqrt[3]{t}} \right)^{1/2} I_{1/3}\left(2 \left( \sqrt{2} \frac{|x| e^{i \frac{\pi}{4}}}{3\sqrt[3]{t}} \right)^{3/2} \right)  \nonumber \\
& \left .-  e^{-i \frac{\pi}{4}} \left( \sqrt{2} \frac{|x| e^{-i \frac{\pi}{4}}}{3\sqrt[3]{t}} \right)^{1/2} I_{1/3}\left(2 \left( \sqrt{2} \frac{|x| e^{-i \frac{\pi}{4}}}{3\sqrt[3]{t}} \right)^{3/2} \right) \right]\\
= & \frac{3}{2^{3/2} \sqrt[3]{3t}} \left[ e^{i \frac{\pi}{4}} Ai\left( \sqrt{2} \frac{|x| e^{i \frac{\pi}{4}}}{\sqrt[3]{3t}} \right) + e^{-i \frac{\pi}{4}} Ai\left( \sqrt{2} \frac{|x| e^{-i \frac{\pi}{4}}}{\sqrt[3]{3t}} \right)  \right].
\end{align*}
\end{proof}

\begin{os}
\normalfont
Result \eqref{quattroterzi} shows that the solution to
\begin{equation}
\frac{\partial^{4/3} u}{\partial t^{4/3}} = - \frac{1}{2^2} \frac{\partial^4 u}{\partial x^4} \label{relsolOrseq}
\end{equation}
can be expressed as
\begin{align}
u_{4/3}(x,t) = & \frac{1}{2}\left[ v_{2/3}(x, te^{i \frac{\pi}{4}}) + v_{2/3}(x, te^{-i \frac{\pi}{4}}) \right]\nonumber \\
= & \frac{3}{2^{2} \sqrt[3]{3t}} \left[ \frac{1}{\lambda_1} Ai\left(\frac{|x| }{\lambda_1 \sqrt[3]{3t}} \right) + \frac{1}{\lambda_2} Ai\left( \frac{|x|}{\lambda_2 \sqrt[3]{3t}} \right)  \right] \label{relsolOrs}
\end{align}
with
\[ \lambda_1 = \frac{e^{-i\frac{\pi}{4}}}{\sqrt{2}}= \sqrt{-\frac{i}{2}} \qquad \lambda_2 = \frac{e^{i \frac{\pi}{4}}}{\sqrt{2}} = \sqrt{\frac{i}{2}}\]
where
\begin{equation}
v_{2/3}(x, t) = \frac{3}{2} \frac{1}{\lambda \sqrt[3]{3t}}Ai\left( \frac{|x|}{\lambda \sqrt[3]{3t}} \right)
\end{equation}
is the solution to
\begin{equation}
\frac{\partial^{2/3} u}{\partial t^{2/3}} = \lambda^2 \frac{\partial^2 u}{\partial x^2}, \quad u(x,0)=\delta(x)
\end{equation}
(see formula (4.2) of \cite{OB09}). Formula \eqref{relsolOrs} shows that the solution to \eqref{relsolOrseq} can be viewed as the superposition of solutions of fractional diffusion equations of order $2/3$ with imaginary time running in opposite directions. Result \eqref{relsolOrs} can also be obtained by inverting the Fourier transform \eqref{frenfracF} for $\nu=2/3$. 
\end{os}

\begin{figure}
\centering
\includegraphics[scale=.5]{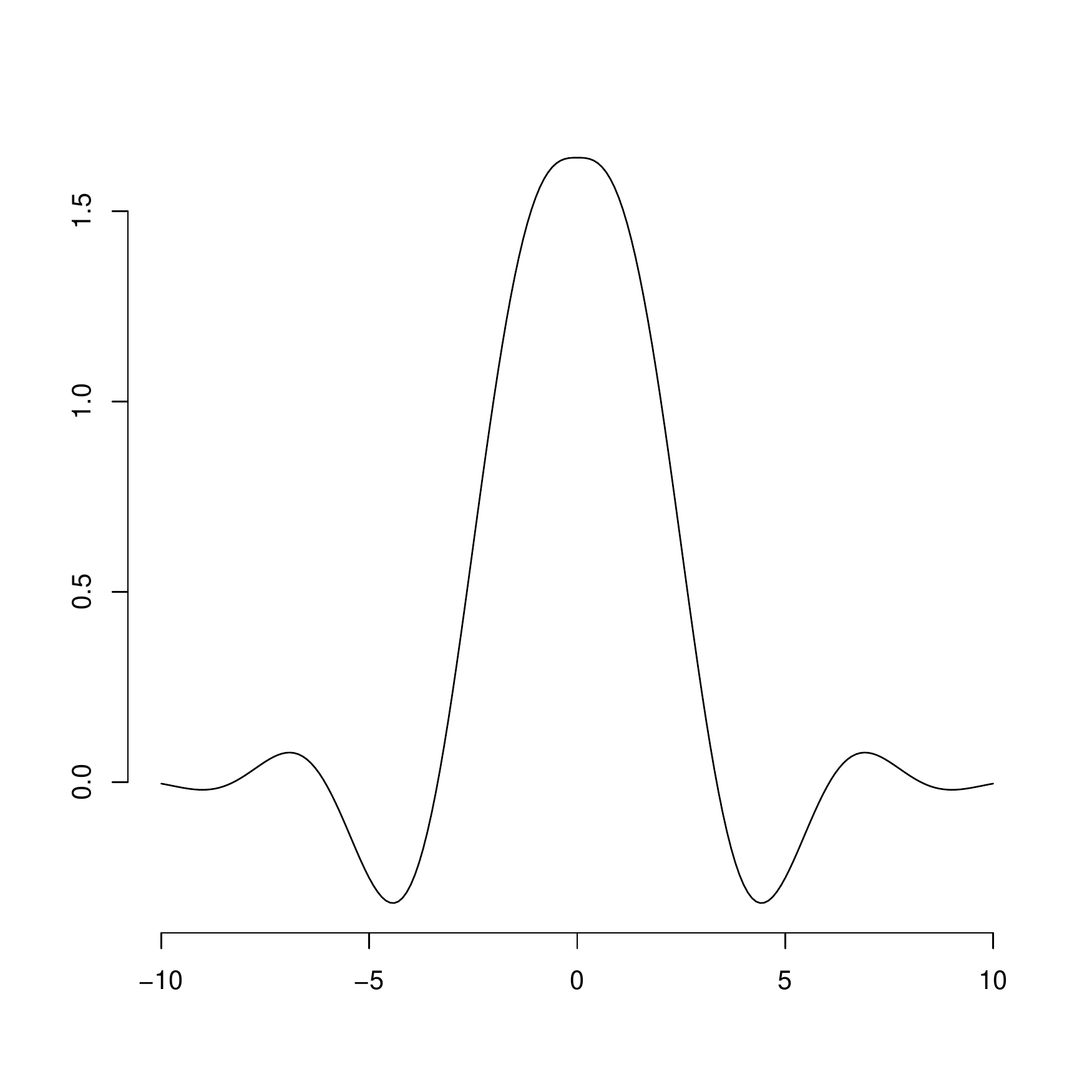}
\caption{$u_{4/3}$  :  the function \eqref{quattroterzi}}
\end{figure}

\begin{os}
\normalfont
By introducing the drift in the Schr\"odinger equations appearing in the equation of vibrations of rods we get that
\begin{align}
0 = & \left[\frac{\partial}{\partial t} - \frac{i}{2}\left( \frac{\partial^2}{\partial x^2} - \mu \frac{\partial}{\partial x} \right) \right] \left[\frac{\partial}{\partial t} + \frac{i}{2}\left( \frac{\partial^2}{\partial x^2} - \mu \frac{\partial}{\partial x} \right) \right]u\nonumber \\
= & \frac{\partial^2 u}{\partial t^2} + \frac{1}{2^2}\left( \frac{\partial^2}{\partial x^2} - \mu \frac{\partial}{\partial x}  \right)^2 u. \label{Srdmu}
\end{align}
The fundamental solution to \eqref{Srdmu} takes the form
\begin{equation}
u(x,t) = \frac{e^{\mu x}}{\sqrt{2\pi t}}  \cos \left( \frac{x^2}{2t} - \frac{\mu^2 t}{2} - \frac{\pi}{4} \right)
\end{equation}
which slightly extends \eqref{urod2}. 
\end{os}

\section{Pseudo-Processes related to the equation of vibrations of rods}
We introduce a pseudo-process associated to the fundamental solution of equation \eqref{eq1} as follows. We can assume that at time $t=0$ we choose randomly the direction of time (increasing with probability $1/2$) measuring with imaginary values. On each branch of the time axis we consider a Brownian motion, that is a process with independent increments over non-overlapping time intervals. To the vector $\bigg( F(t_1), \ldots , F(t_n) \bigg)$ with $0 <  t_1 < \ldots<t_j < \ldots < t_n$ we associate the signed measure
\begin{align}
& \mu\{ F(t_1) \in dx_1, \ldots , F(t_n) \in dx_n \} \label{frenmultiv}\\
= & \frac{1}{2} \bigg[ Pr\{B(s_1) \in dx_1, \ldots , B(s_n) \in dx_n\} \Big|_{s_j = it_j} + Pr\{B(s_1) \in dx_1, \ldots , B(s_n) \in dx_n \}\Big|_{s_j=-it_j} \bigg]\nonumber \\
= & \frac{1}{2} \left[ \prod_{j=1}^n \frac{e^{-\frac{(x_j-x_{j-1})^2}{2i(t_j- t_{j-1})}}}{\sqrt{2\pi i (t_j-t_{j-1})}} dx_j + \prod_{j=1}^n \frac{e^{-\frac{(x_j-x_{j-1})^2}{-2i(t_j-t_{j-1})}}}{\sqrt{-2\pi i (t_j-t_{j-1})}}dx_j \right]  \nonumber \\
=& \frac{(2\pi)^{-n/2}}{2 \prod_{j=1}^n \sqrt{t_j - t_{j-1}} } \left[ e^{-i \frac{\pi}{4}n} e^{- \sum_{j=1}^n \frac{(x_j-x_{j-1})^2}{2i(t_j- t_{j-1})}}+ e^{i\frac{\pi}{4}n} e^{-\frac{(x_j-x_{j-1})^2}{-2i(t_j-t_{j-1})}} \right] \prod_{j=1}^n dx_j  \nonumber \\
=& \frac{(2\pi)^{-n/2}}{\prod_{j=1}^n \sqrt{t_j - t_{j-1}} }\cos\left( \sum_{j=1}^n \frac{(x_j - x_{j-1})^2}{2 (t_j-t_{j-1})} - n\frac{\pi}{4} \right)  \prod_{j=1}^n dx_j .\nonumber
\end{align}
We can verify that
\[ \int_{\mathbb{R}} \ldots  \int_{\mathbb{R}} \mu\{ F(t_1) \in dx_1, \ldots , F(t_n) \in dx_n \} =1 \]
by writing \eqref{frenmultiv} as
\begin{align}
& \frac{(2\pi)^{-n/2}}{\prod_{j=1}^n \sqrt{t_j - t_{j-1}} } \Bigg[ \cos\left( \sum_{j=1}^{n-1} \frac{(x_j - x_{j-1})^2}{2 (t_j-t_{j-1})} - (n-1) \frac{\pi}{4} \right) \cos\left( \frac{(x_n - x_{n-1})^2}{2(t_n - t_{n-1})} - \frac{\pi}{4}\right) \nonumber \\
& - \sin\left( \sum_{j=1}^{n-1} \frac{(x_j - x_{j-1})^2}{2 (t_j-t_{j-1})} - (n-1) \frac{\pi}{4} \right) \sin\left( \frac{(x_n - x_{n-1})^2}{2(t_n - t_{n-1})} - \frac{\pi}{4}\right)  \Bigg] \prod_{j=1}^n dx_j .\label{reqLam}
\end{align}
By integrating \eqref{reqLam} with respect to $x_n$ the second term vanishes while the first one by applying the Fresnel integrals reduces to
\begin{equation}
\frac{(2\pi)^{-(n-1)/2}}{\prod_{j=1}^{n-1} \sqrt{t_j - t_{j-1}} } \cos\left( \sum_{j=1}^{n-1} \frac{(x_j - x_{j-1})^2}{2 (t_j-t_{j-1})} - (n-1) \frac{\pi}{4} \right) \prod_{j=1}^{n-1} dx_j .\label{densMar}
\end{equation}
The density \eqref{densMar} is the marginal $\mu\{ \cap_{j=1}^{n-1} F(t_j) \in dx_j \}$ of the vector 
\[ \left( F(t_1), \ldots , F(t_{n-1}) \right).\] 
The one- and two-dimensional densities of the Fresnel pseudo-process read
\begin{equation}
\mu\{ F(t_1) \in dx_1 \} = \frac{dx_1}{\sqrt{2\pi t_1}} \cos\left( \frac{x^2_1}{2t_1} - \frac{\pi}{4} \right)
\end{equation}
and
\begin{equation}
\mu\{ F(t_1) \in dx_1 , F(t_2) \in dx_2\} = \frac{dx_1dx_2}{2\pi \sqrt{t_1(t_2 - t_1)}} \sin\left( \frac{x_1^2}{2t_1} + \frac{(x_2 -x_1)^2}{2(t_2 -t_1)} \right).
\end{equation}
By means of \eqref{frenmultiv} we can construct the signed measure of cylinder sets of the form
\[ C = \left\lbrace \bigcap_{j=1}^n \left( a_j \leq x_j \leq b_j\right) \right\rbrace  \]
in the following manner
\begin{align}
\mu\{C\} = & \int_{a_1}^{b_1} \ldots \int_{a_n}^{b_n} \mu\{F(t_1) \in dx_1, \ldots, F(t_n) \in dx_n \}\label{cylinder}\\
= & \int_{a_1}^{b_1} \ldots \int_{a_n}^{b_n} \frac{(2\pi)^{-n/2}}{\prod_{j=1}^n  \sqrt{t_j - t_{j-1}}} \cos\left( \sum_{j=1}^n \frac{(x_j -x_{j-1})^2}{2(t_j - t_{j-1})} - n \frac{\pi}{4} \right) \prod_{j=1}^n dx_j .\nonumber
\end{align}
The construction of \eqref{cylinder} follows the same line of the signed measures of pseudo-processes related to higher-order heat equations (see for example, Krylov \cite{Kry60}, Ladokhin \cite{Lad63,Lad64}, Daletsky et al. \cite{Dali64}, Hochberg \cite{Hoc78}). The signed measure is extended to the $\sigma$-algebra generated by the cylinder sets in the usual way (see, for example \cite{Lad63}).

\begin{os}
\normalfont
The signed measure constructed above is not markovian since
\begin{equation} 
\begin{array}{l} \mu\{ F(t_1) \in dx_1,  F(t_3) \in dx_3 \big| F(t_2) = x_2 \} \\
\neq \mu\{ F(t_1) \in dx_1 \big| F(t_2) = x_2 \} \mu \{F(t_3) \in dx_3 \big| F(t_2) = x_2\}
\end{array} \label{statem} \end{equation}
with $t_1 < t_2 < t_3$. From \eqref{frenmultiv} we have that
\begin{align*} 
\mu\{ F(t_1) \in dx_1,  F(t_3) \in dx_3 \Big| F(t_2) = x_2 \} = & \frac{\mu\{ F(t_1) \in dx_1,  F(t_2) = x_2, F(t_3) \in dx_3 \}}{\mu\{ F(t_3) \in dx_3 \}}\\
= & \frac{\sqrt{t_2}}{2\pi} \frac{\cos\left( \frac{(x_1 -x_0)^2}{2|t_1 - t_0|} + \frac{(x_2 - x_1)^2}{2|t_2 - t_1|} + \frac{(x_3 - x_2)^2}{2|t_3 - t_2|} - 3 \frac{\pi}{4} \right)}{\sqrt{\prod_{j=1}^3 (t_j - t_{j-1})} \, \cos\left(\frac{x_2^2}{2t_2} - \frac{\pi}{4}  \right)}\, dx_1\, dx_3.
\end{align*}
Clearly 
\begin{align*}
& \mu\{ F(t_1) \in dx_1 \Big| F(t_2) = x_2 \} \mu \{F(t_3) \in dx_3 \Big| F(t_2) = x_2\}\\
= & \sqrt{t_2} \, \frac{\cos\left( \frac{x_1^2}{2t_1}+ \frac{(x_2 - x_1)^2}{2|t_2 -t_1|} - \frac{\pi}{2} \right) \cos\left( \frac{x_2^2}{2t_2} + \frac{(x_3 - x_2)^2}{2|t_3 - t_2|} - \frac{\pi}{2} \right)}{\sqrt{t_1 (t_3- t_2)(t_2-t_1)} \, \cos^2\left( \frac{x_2^2}{2t_2} - \frac{\pi}{4} \right)}\, dx_1\, dx_3
\end{align*}
and thus we conclude that the statement \eqref{statem} holds.
\end{os}

\subsection{Feynman-Kac formula}
For a non-negative $k \in C^2(\mathbb{R})$, the Feynman-Kac functional
\begin{equation}
w(x,t) = E \left[ e^{-\int_0^t k(F(s)) ds} \big| F(0)=x \right] 
\end{equation}
must be understood in the sense that
\begin{equation}
\lim_{n \to \infty} \int_{\mathbb{R}} \ldots \int_{\mathbb{R}} \exp\left( - \sum_{j=1}^{n} k(x_j)(t_j - t_{j-1}) \right)  \mu\left\lbrace F(t_1) \in dx_1, \ldots, F(t_n) \in dx_n \right\rbrace  \label{measkac}
\end{equation}
where $\mu$ is the signed measure defined in \eqref{frenmultiv} and provided that the limit exists. This is similar to the definition of Feynman-Kac functional in Krylov \cite{Kry60}, Lachal \cite{Lach03}, Hochberg \cite{Hoc78}. We now show that the functional $w$ solves the p.d.e.  
\begin{equation}
\frac{\partial^2 w}{\partial t^2}(x,t) = -\frac{1}{2} \left[ \frac{\partial^4 w}{\partial x^4}(x, t) - \frac{\partial^2}{\partial x^2}\Big(k(x)w(x,t) \Big) - k(x) \frac{\partial^2 w}{\partial x^2}(x,t) - k^2(x) w(x,t) \right]. \label{kacf}
\end{equation}
In order to prove \eqref{kacf} we consider that the measure $\mu$ appearing in \eqref{measkac} permits us to write $w(x,t)$ as
\[ w(x,t) = \frac{1}{2}\left[ \hat{w}(x, it) + \hat{w}(x, -it) \right] \]
where
\begin{equation}
\hat{w}(x,t) = E\left[ e^{-\int_0^t k(B(s)) ds} \Big| B(0) = x \right]
\end{equation}
is the classical Feynman-Kac function for Brownian motion. Since 
\[ \frac{\partial \hat{w}}{\partial t}(x,t) = \frac{1}{2} \frac{\partial^2 \hat{w}}{\partial x^2}(x,t) - k(x) \hat{w}(x,t) \]
by deriving \eqref{kacf} with respect to $t$ we have that
\begin{align}
\frac{\partial w}{\partial t}(x,t) = & \frac{i}{2} \frac{\partial^2 \hat{w}}{\partial x^2}(x, it) - i k(x) \hat{w}(x, it) - \frac{i}{2} \frac{\partial^2 \hat{w}}{\partial x^2}(x, -it) + i k(x) \hat{w}(x, -it). \label{dfql}
\end{align}
One more time-derivative in \eqref{dfql} yields
\begin{align*}
\frac{\partial^2 w}{\partial t^2}(x,t) = & \frac{1}{2}\Bigg[ \frac{i}{2} \Big[ i\frac{\partial^2}{\partial x^2}\left( \frac{1}{2} \frac{\partial^2 \hat{w}}{\partial x^2}(x,it) - k(x) \hat{w}(x,it) \right) - ik(x) \left( \frac{1}{2} \frac{\partial^2 \hat{w}}{\partial x^2}(x,it) - k(x) \hat{w}(x,it)\right)\\
& + i \frac{\partial^2}{\partial x^2}\left( \frac{1}{2} \frac{\partial^2 \hat{w}}{\partial x^2}(x,-it) - k(x) \hat{w}(x,-it) \right) -i k(x) \left( \frac{1}{2} \frac{\partial^2 \hat{w}}{\partial x^2}(x,-it) - k(x) \hat{w}(x,-it) \right) \Big] \Bigg].
\end{align*}
We observe that for equation \eqref{kacf} a decoupling similar to that applied before in Section 3 and 4 does not work because of the form of the first-order time derivative \eqref{dfql}.

\subsection{Superposition of vibrations}
We now analyse the superposition of solutions of the equation \eqref{eq1}. Our approach is based on Fourier transforms and gives
\begin{align*}
& \int_{-\infty}^{+\infty} \ldots \int_{-\infty}^{+\infty} e^{i \beta \sum_{j=1}^{n} x_j} \prod_{j=1}^{n} \frac{1}{\sqrt{2\pi t}} \cos\left( \frac{x^2_j}{2t} - \frac{\pi}{4} \right) dx_j\\
= & \Bigg[ \int_{-\infty}^{+\infty} e^{i\beta x} \frac{1}{\sqrt{2\pi t}} \cos\left( \frac{x^2}{2t} - \frac{\pi}{4} \right) dx  \Bigg]^n = \cos^n \frac{\beta^2 t}{2} = \frac{\left(e^{i \frac{\beta^2 t}{2} } + e^{-i \frac{\beta^2 t}{2}}\right)^n}{2^n}\\
= & \frac{1}{2^n} \sum_{k=0}^n \binom{n}{k} e^{-i \frac{\beta^2 t}{2}(n - 2k)}\\
= & \frac{1}{2^{n-1}} \left[ \frac{e^{i \frac{\beta^2 t}{2}n} + e^{-i \frac{\beta^2 t}{2} n}}{2} + \frac{1}{2} \sum_{k=1}^{n-1} \binom{n}{k}  e^{-i \frac{\beta^2 t}{2}(n - 2k)} \right] \\
= & \frac{1}{2^{n-1}} \left[ \frac{e^{i \frac{\beta^2 t}{2}n} + e^{-i \frac{\beta^2 t}{2} n}}{2} + \binom{n}{1} \frac{e^{i \frac{\beta^2 t}{2}(n - 2)} + e^{-i \frac{\beta^2 t}{2} (n - 2)}}{2} + \frac{1}{2}\sum_{k=2}^{n-2} \binom{n}{k} e^{-i \frac{\beta^2 t}{2}(n - 2k)} \right].
\end{align*}
For $n \in 2 \mathbb{N}$ we obtain that
\begin{align}
\cos^n \frac{\beta^2 t}{2} = & \frac{1}{2^{n-1}} \sum_{k=0}^{n/2-1} \binom{n}{k} \cos \frac{\beta^2 t (n - 2k)}{2} + \frac{1}{2^n} \binom{n}{n/2} \label{pariSum}
\end{align}
whereas, for $n \in 2\mathbb{N} +1$ we have that
\begin{equation}
\cos^n \frac{\beta^2 t}{2} = \frac{1}{2^{n-1}} \sum_{k=0}^{(n-1)/2} \binom{n}{k} \cos \frac{\beta^2 t (n - 2k)}{2}.
\end{equation}
Formula \eqref{pariSum} shows that for $n$ even the resulting superpositions of waves consists of components of the form \eqref{funQW} plus a Dirac delta function.

\begin{os}
\normalfont
For an even number of terms in the sum $\sum_{j=1}^n F_j(t)$ we have delta components in zero. This surprising fact can be also confirmed by considering the convolution of two Fresnel waves as shown below.
\begin{align*}
& \int_{-\infty}^{+\infty} u(w, t) u(x-w, t) dw \\
= & \int_{-\infty}^{+\infty} \frac{1}{\sqrt{2\pi t}} \cos\left( \frac{w^2}{2t} - \frac{\pi}{4} \right) \frac{1}{\sqrt{2\pi t}} \cos\left( \frac{(x-w)^2}{2t} - \frac{\pi}{4} \right) dw \\
= & \frac{1}{2^2 \pi t} \Bigg[ \int_{-\infty}^{+\infty} \cos\left( \frac{(x-w)^2}{2t} - \frac{w^2}{2t} \right) + \sin\left( \frac{(x-w)^2}{2t} + \frac{w^2}{2t} \right) dw \Bigg]\\
= & \frac{1}{2^2 \pi t} \Bigg[ \int_{-\infty}^{+\infty}\cos\left( \frac{x^2}{2t} - \frac{wx}{t} \right) + \sin\left( \frac{x^2}{2t} + \frac{w^2}{t} - \frac{wx}{t} \right) dw \Bigg]\\
= & \frac{1}{4\pi t} \Bigg[ \cos \frac{x^2}{2t} \int_{-\infty}^{+\infty} \cos \frac{xw}{t} dw + \sin \left( \frac{(w-x)^2}{t} - \frac{x^2}{t} + \frac{wx}{t} \right) dw \Bigg]\\
= & \frac{1}{2} \delta(x) \cos \frac{x^2}{2t} + \frac{1}{4\pi t} \int_{-\infty}^{+\infty} \sin\left( \frac{w^2}{t} - \frac{x^2}{2t} + \frac{x(w+x)}{t} \right) dw\\
= & \frac{1}{2}\delta(x) + \frac{1}{4\pi t} \int_{-\infty}^{+\infty} \sin\left( \frac{x^2}{2t} + \left( \frac{w}{\sqrt{t}} + \frac{x}{\sqrt{t}} \right)^2 - \frac{x^2}{4t} \right) dw\\
=& \frac{1}{2}\delta(x) + \frac{1}{4\pi t} \int_{-\infty}^{+\infty} \Big[ \sin \frac{x^2}{4t}\cos\left( \frac{w + x/2}{\sqrt{t}} \right)^2 + \cos \frac{x^2}{4t} \sin\left( \frac{w +x/2}{\sqrt{t}} \right)^2 \Big] dw\\
=& \frac{1}{2}\delta(x) + \frac{1}{4\pi \sqrt{t}}  \Bigg[ \sin \frac{x^2}{4t} \int_{-\infty}^{+\infty} \cos\left( w^2 \right) dw +  \cos \frac{x^2}{4t} \int_{-\infty}^{+\infty} \sin\left( w^2 \right)dw \Bigg] \\
=& \frac{1}{2}\delta(x) + \frac{1}{4 \sqrt{\pi t}}  \Bigg[ \frac{1}{\sqrt{2}} \sin \frac{x^2}{4t} +  \frac{1}{\sqrt{2}}\cos \frac{x^2}{4t} \Bigg] \\
= & \frac{1}{2}\delta(x) + \frac{1}{2} \frac{1}{\sqrt{4\pi t}} \cos\left( \frac{x^2}{4t} - \frac{\pi}{4} \right).
\end{align*}
This result accords with \eqref{pariSum} for $n=2$ after an inversion of the Fourier transform.
\end{os}

\section{Vibrations of plates}
\subsection{Vibrations of infinite plates}
The $d$-dimensional version of equation \eqref{statem} has the form
\begin{equation}
\frac{\partial^2 u}{\partial t^2} = -\frac{1}{2^2}\left( \frac{\partial^2}{\partial x_1^2} + \ldots + \frac{\partial^2}{\partial x_d^2}\right)^2 u \label{pdeMULT}
\end{equation}
and emerges in the study of vibrations of rigid thin structures like plates. The solution to \eqref{pdeMULT} subject to the initial conditions
\begin{equation}
\left\lbrace \begin{array}{l} u(x_1, \ldots , x_d, 0) = \prod_{j=1}^d \delta(x_j)\\ u_{t}(x_1, \ldots , x_d, 0)=0 \end{array} \right .
\end{equation}
has the form
\begin{equation}
u(x_1, \ldots , x_d, t) = \frac{1}{(\sqrt{2\pi t})^d} \cos \left( \sum_{j=1}^d \frac{x^2_j}{2t} - d\frac{\pi}{4} \right).  \label{ddim}
\end{equation}
The equation of vibrating plates is examined in the book by Courant and Hilbert \cite[page 307]{CouHilb}  where the case of circular plates with Neumann boundary condition is outlined. 
\begin{te}
The Fourier transform of \eqref{ddim} is
\begin{align}
U(\beta_1, \ldots , \beta_n, t) = & \int_{\mathbb{R}^d} e^{i \sum_{j=1}^d \beta_j\, x_j} u(x_1, \ldots , x_d, t) \, dx_1\ldots dx_d = \cos\left( \sum_{j=1}^d \frac{\beta^2_j\, t}{2} \right) \label{Fourdim}
\end{align}
\end{te}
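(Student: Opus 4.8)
The plan is to write the fundamental solution \eqref{ddim} as the half-sum of two $d$-dimensional complex Gaussian kernels carrying conjugate imaginary times, exactly in the spirit of \eqref{funQW}, and then to reduce the $d$-dimensional Fourier integral to a product of the one-dimensional Fresnel integrals already evaluated in Section 2. First I would use $\cos(\theta - d\pi/4) = \tfrac12\bigl(e^{-id\pi/4}e^{i\theta} + e^{id\pi/4}e^{-i\theta}\bigr)$ with $\theta = \sum_{j=1}^d x_j^2/(2t)$ to rewrite \eqref{ddim} as
\[
u(x_1,\ldots,x_d,t) = \frac12\left[\prod_{j=1}^d \frac{e^{-i\pi/4}}{\sqrt{2\pi t}}\,e^{i\frac{x_j^2}{2t}} + \prod_{j=1}^d \frac{e^{i\pi/4}}{\sqrt{2\pi t}}\,e^{-i\frac{x_j^2}{2t}}\right],
\]
i.e. as the half-sum of two factorized complex Gaussian densities, which is just $\tfrac12[\mu\{dx,it\}+\mu\{dx,-it\}]$ for a $d$-dimensional Brownian motion.

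Next, since each product factorizes over $j$, the $d$-dimensional Fourier transform is the product over $j$ of the one-variable transforms, and each such factor is
\[
\frac{e^{\mp i\pi/4}}{\sqrt{2\pi t}}\int_{\mathbb R} e^{i\beta_j x}\,e^{\pm i\frac{x^2}{2t}}\,dx = e^{\mp i\pi/4}\,e^{\mp i\frac{\beta_j^2 t}{2}}\int_{\mathbb R}\frac{e^{\pm i\frac{(x+\beta_j t)^2}{2t}}}{\sqrt{2\pi t}}\,dx = e^{\mp i\frac{\beta_j^2 t}{2}},
\]
after completing the square and using the Fresnel evaluation $\int_{\mathbb R} e^{\pm i y^2/(2t)}\,dy/\sqrt{2\pi t} = e^{\pm i\pi/4}$ already invoked for \eqref{jkl}. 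Multiplying over $j$ and averaging the two products yields $\tfrac12\bigl(e^{-it\sum_j\beta_j^2/2} + e^{it\sum_j\beta_j^2/2}\bigr) = \cos\bigl(t\sum_{j=1}^d\beta_j^2/2\bigr)$, which is \eqref{Fourdim}.

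The step requiring care is that the Fresnel integrals are only conditionally convergent and the $d$-fold integral is not absolutely convergent, so Fubini and the completion-of-square manipulation are not immediately legitimate; I would insert a Gaussian regulator $e^{-\varepsilon\sum_j x_j^2}$, carry out the now absolutely convergent product of complex Gaussian integrals, and let $\varepsilon\downarrow 0$, which is the same device implicitly underlying \eqref{jkl}. As an independent confirmation one can bypass the integration entirely: since \eqref{ddim} solves \eqref{pdeMULT} with $u(\cdot,0)=\prod_j\delta(x_j)$ and $u_t(\cdot,0)=0$, its Fourier transform satisfies the ordinary differential equation $U_{tt}=-\tfrac1{2^2}\bigl(\sum_j\beta_j^2\bigr)^2 U$ with $U(\beta,0)=1$ and $U_t(\beta,0)=0$, whose unique solution is $\cos\bigl(t\sum_j\beta_j^2/2\bigr)$, again in agreement with \eqref{Fourdim}.
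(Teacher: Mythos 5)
Your proof is correct, but it is organized differently from the paper's. The paper proves \eqref{Fourdim} by peeling off one coordinate at a time: it expands $\cos\left(\sum_j x_j^2/(2t) - d\pi/4\right)$ via the addition formula into a $\cos\cdot\cos - \sin\cdot\sin$ combination, integrates out $x_d$ using the pair of one-dimensional transforms $\frac{1}{\sqrt{2\pi t}}\int e^{i\beta x}\cos\left(\frac{x^2}{2t}-\frac{\pi}{4}\right)dx=\cos\frac{\beta^2 t}{2}$ and $\frac{1}{\sqrt{2\pi t}}\int e^{i\beta x}\sin\left(\frac{x^2}{2t}-\frac{\pi}{4}\right)dx=\sin\frac{\beta^2 t}{2}$, recombines into a single cosine with the phase $\beta_d^2 t/2$ absorbed, and iterates $d$ times. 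You instead split the cosine into its two conjugate complex exponentials at the outset, so that each term factorizes as a product of one-dimensional complex Gaussians (the $\pm it$ Brownian kernels of \eqref{funQW}); the $d$-fold integral then collapses to a product of identical one-variable Fresnel computations and the final averaging restores the cosine. The two routes rest on the same one-dimensional evaluations, but yours avoids the induction and the trigonometric bookkeeping entirely, at the cost of working with complex kernels throughout; the paper's stays with real trigonometric functions, which is consistent with how it presents the multidimensional solution \eqref{ddim}. Your added remarks -- the Gaussian regulator to justify the conditionally convergent interchange, and the independent verification via the ODE $U_{tt}=-\frac{1}{2^2}\left(\sum_j\beta_j^2\right)^2U$ with $U(\beta,0)=1$, $U_t(\beta,0)=0$ -- are both sound and go beyond what the paper records (the second mirrors the check the paper performs for \eqref{jkl} in the one-dimensional case).
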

\begin{proof}
Let us write \eqref{Fourdim} as
\begin{align*}
& \frac{1}{(\sqrt{2\pi t})^d}\Bigg[ \int_{\mathbb{R}^{d-1}} e^{i \sum_{j=1}^{d-1} \beta_j x_j} \cos\left( \frac{\sum_{j=1}^{d-1} x_j^2}{2t} - (d-1)\frac{\pi}{4} \right) \prod_{j=1}^{d-1} dx_j \, \int_{-\infty}^{+\infty} e^{i\beta_d x_d} \cos\left( \frac{x^2_d}{2t} - \frac{\pi}{4} \right) \, dx_d\\
- & \int_{\mathbb{R}^{d-1}} e^{i \sum_{j=1}^{d-1}\beta_{j} x_j } \sin\left( \frac{\sum_{j=1}^{d-1} x_j^2}{2t} - (d-1)\frac{\pi}{4} \right) \prod_{j=1}^{d-1} dx_j \, \int_{-\infty}^{+\infty} e^{i\beta_d x_d} \sin\left( \frac{x^2_d}{2t} - \frac{\pi}{4} \right) \, dx_d \Bigg]\\
= & \frac{1}{(\sqrt{2\pi t})^{d-1}} \int_{\mathbb{R}^{d-1}} e^{i \sum_{j=1}^{d-1} \beta_j x_j} \cos\left(\frac{\sum_{j=1}^{d-1} x_j^2}{2t} - (d-1)\frac{\pi}{4} + \frac{\beta^2_d t}{2} \right) \prod_{j=1}^{d-1} dx_j
\end{align*}
because 
\begin{align}
\frac{1}{\sqrt{2\pi t}} \int_{-\infty}^{+\infty} e^{i \beta_d x_d} \cos\left( \frac{x_d^2}{2t} - \frac{\pi}{4} \right) dx_d = & \cos \frac{\beta_d^2 t}{2},\\
\frac{1}{\sqrt{2\pi t}} \int_{-\infty}^{+\infty} e^{i \beta_d x_d} \sin\left( \frac{x_d^2}{2t} - \frac{\pi}{4} \right) dx_d = & \sin \frac{\beta_d^2 t}{2}.
\end{align}
After $m$ integrations, at the $(m+1)$-th step, we have that
\begin{align*}
& \frac{1}{(\sqrt{2\pi t})^{d-m}}\Bigg[ \int_{\mathbb{R}^{d - m-1}} e^{i \sum_{j=1}^{d-m-1} \beta_j x_j} \cos\left( \frac{\sum_{j=1}^{d -m-1} x_j^2}{2t}  - (d-m-1) \frac{\pi}{4} + \sum_{j=d-m+1}^{d} \frac{\beta_j^2 t}{2} \right) \prod_{j=1}^{d-m-1} dx_j\\
& \int_{-\infty}^{+\infty} e^{i\beta_{d-m} x_{d-m}} \cos\left( \frac{x^2_{d-m}}{2t} - \frac{\pi}{4} \right) dx_{d-m}\\
& - \int_{\mathbb{R}^{d-m-1}} e^{i \sum_{j=1}^{d-m-1}\beta_j x_j} \sin\left( \frac{\sum_{j=1}^{d-m-1} x_j^2}{2t}  - (d-m-1) \frac{\pi}{4} + \sum_{j=d-m+1}^{d} \frac{\beta_j^2 t}{2} \right) \prod_{j=1}^{d-m-1} dx_j\\
& \int_{-\infty}^{+\infty} e^{i\beta_{d-m} x_{d-m}} \sin\left( \frac{x^2_{d-m}}{2t} - \frac{\pi}{4} \right) dx_{d-m} \Bigg]\\
= & \frac{1}{(\sqrt{2\pi t})^{d-m-1}} \int_{\mathbb{R}^{d - m - 1}} e^{i \sum_{j=1}^{d-m-1} \beta_j x_j} \cos\left( \frac{\sum_{j=1}^{d -m -1} x_j^2}{2t}  - (d-m-1) \frac{\pi}{4} + \sum_{j=d-m}^{d} \frac{\beta_j^2 t}{2} \right) \prod_{j=1}^{d-m-1} dx_j .
\end{align*}
By performing the $d-m-1$ remaining integrals with respect to $x_1, \ldots , x_{d-m-1}$ we arrive at result \eqref{Fourdim}.
\end{proof}
In particular, for $\beta_1= \ldots =\beta_n = \beta$, from \eqref{Fourdim} we have that
\begin{equation}
U(\beta, \ldots , \beta, t) = \int_{\mathbb{R}^d} e^{i \beta \sum_{j=1}^d  x_j} u(x_1, \ldots , x_d, t) \, dx_1\ldots dx_d = \cos \frac{\beta^2 d\, t}{2}
\end{equation}
which shows that the sum of the marginals of \eqref{ddim} has the same form of \eqref{funQW} at time $t d$. For $d=1$ the equation \eqref{pdeMULT} coincides with \eqref{eq1}, \eqref{ddim} becomes \eqref{funQW} and \eqref{Fourdim} reduces to \eqref{jkl}. For $d=2$ we obtain the interesting result
\begin{equation}
u(x_1, x_2, t) = \frac{1}{2\pi t} \sin \left( \frac{x_1^2 + x_2^2}{2t} \right) \label{law2dim}
\end{equation} 
and the corresponding structure is depicted in figure \ref{sdFig2}.

\begin{figure}[ht]
\centering
\includegraphics[width=12cm, height=5cm]{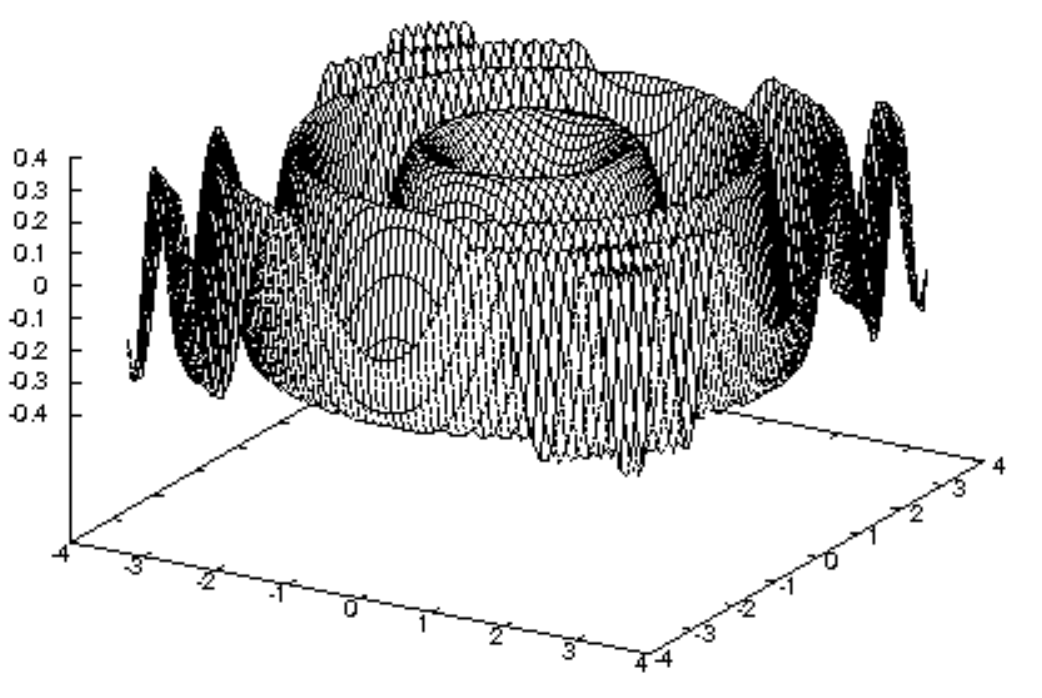} 
\caption{The vibrating surface \eqref{law2dim} (within a sphere) at time $t=0$}
\label{sdFig2}
\end{figure}

It is apparent from \eqref{law2dim} that the bivariate signed measure does not factorize. We can check that \eqref{law2dim} (as well as \eqref{ddim}) integrates to unity
\begin{align*}
\int_{-\infty}^{+\infty} \int_{-\infty}^{+\infty} u(x_1,x_2,t) \, dx_1\, dx_2 = & \frac{1}{2\pi t} \int_{-\infty}^{+\infty} \int_{-\infty}^{+\infty} \sin\left( \frac{x_1^2 + x_2^2}{2t} \right)\, dx_1\, dx_2\\
= & \frac{1}{2\pi t} \int_{\mathbb{R}^2} \left[ \sin \frac{x_1^2}{2t}\cos \frac{x_2^2}{2t} + \sin \frac{x_2^2}{2t}\cos \frac{x_1^2}{2t} \right] dx_1\, dx_2\\
= & \frac{1}{\pi t} \left( \int_{\mathbb{R}}\sin \frac{x^2}{2t} \, dx \right)^2 = 1.
\end{align*}
It is worthwhile to note also that
\[ u(x_1, t)\, u(x_2, t)  \neq u(x_1, x_2, t) \]
because
\begin{align*}
u(x_1, t)\, u(x_2, t) = & \frac{1}{2\pi t} \cos\left( \frac{x_1^2}{2t} - \frac{\pi}{4} \right)\, \cos\left( \frac{x_2^2}{2t} - \frac{\pi}{4} \right)\\
= & \frac{1}{2^2 \pi t} \left[ \sin\left( \frac{x_1^2 +x_2^2}{2t} \right) + \cos\left( \frac{x_1^2 - x_2^2}{2t} \right) \right]\\
= & \frac{1}{2} u(x_1, x_2, t) + \frac{1}{2^2 \pi t} \cos\left( \frac{x_1^2 - x_2^2}{2t} \right).
\end{align*}

\subsection{Vibrations of circular plates}
For the analysis of the vibrations of circular plates we need the following results from the theory of reflecting Brownian motion inside a circle $C_R$. Since the problem has isotropic structure (the circular invariance is due to the initial disturbance concentrated in the center starting off the vibrations) we can restrict ourselves to the Cauchy problem for the heat equation
\begin{equation}
\frac{\partial u}{\partial t} = \frac{1}{2}\left[ \frac{\partial^2}{\partial r^2} + \frac{1}{2}\frac{\partial}{\partial r} \right] u, \quad 0< r < R,\; t>0
\label{eqasd}
\end{equation}
with boundary and initial conditions
\begin{equation}
\left\lbrace \begin{array}{ll} u(r,0) = \delta(r) \\ \frac{\partial u}{\partial n} \Big|_{\partial C_{R}} = 0. \end{array} \right .
\label{eqasdcond}
\end{equation}
The solution of \eqref{eqasd} with \eqref{eqasdcond} is 
\begin{equation}
q^{ref}(r,t) = \frac{1}{t}\left[ e^{-\frac{r^2}{2t}} + e^{-\frac{R^4}{2r^2 t}} \right]
\end{equation}
and is constructed by means of the inversion of radius. The probability law of the reflecting Brownian motion in $C_R$ solves the adjoint equation
\begin{equation}
\frac{\partial u}{\partial t} = \frac{1}{2} \left[ \frac{\partial^2}{\partial r^2} - \frac{1}{r} \frac{\partial}{\partial r} + \frac{1}{r^2} \right]u \label{seiundici}
\end{equation}
with initial and boundary conditions \eqref{eqasdcond} and has the following explicit form
\begin{equation}
p^{ref}(r,t) = \frac{r}{t} e^{-\frac{r^2}{2t}} + \frac{R^4}{r^3 t} e^{-\frac{R^4}{2r^2 t}}, \quad R>r>0,\; t>0
\end{equation}
and can be checked that it integrates to unity. The results above permit us to solve related boundary value problems for the equation of vibrations of plates. We have the following result.
\begin{te}
The Cauchy problem
\begin{equation}
\frac{\partial^2 u}{\partial t^2} = -\frac{1}{2^2}\left[ \frac{\partial^2}{\partial r^2} + \frac{1}{r}\frac{\partial}{\partial r} \right]^2 u, \quad 0< r < R,\; t>0
\end{equation}
with initial and boundary conditions
\begin{equation}
\left\lbrace \begin{array}{ll} u(r,0) = \delta(r) \\ u_t(x,0)=0\\ \frac{\partial u}{\partial n} \Big|_{\partial C_{R}} = 0 \end{array} \right .
\label{condwww}
\end{equation}
has solution
\begin{equation}
\bar{q}^{ref}(r,t) = \frac{1}{t} \sin \frac{r^2}{2t} + \frac{1}{t}\sin \frac{R^4}{2r^2 t} \label{sequattordici}
\end{equation}
while the adjoint equation
\begin{equation}
\frac{\partial^2 u}{\partial t^2} = -\frac{1}{2^2}\left[ \frac{\partial^2}{\partial r^2} -  \frac{1}{r}\frac{\partial}{\partial r} + \frac{1}{r^2} \right]^2 u, \quad 0< r < R,\; t>0 \label{seiquindici}
\end{equation}
with initial and boundary conditions \eqref{condwww} has solution
\begin{equation}
\bar{p}^{ref}(r,t) = \frac{r}{t} \sin \frac{r^2}{2t} + \frac{R^4}{r^3 t} \sin \frac{R^4}{2r^2 t}.
\label{seisedici}
\end{equation}
\end{te}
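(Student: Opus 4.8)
The plan is to reuse, in the radial setting, the factorization of the plate operator into two conjugate Schr\"odinger operators together with the imaginary-time substitution, exactly as in the proofs of the one-dimensional results of Section 2. Writing $L=\frac{\partial^2}{\partial r^2}+\frac1r\frac{\partial}{\partial r}$, one has the algebraic identity
\begin{equation*}
\frac{\partial^2}{\partial t^2}+\frac{1}{2^2}L^2=\left(\frac{\partial}{\partial t}+\frac{i}{2}L\right)\left(\frac{\partial}{\partial t}-\frac{i}{2}L\right),
\end{equation*}
with commuting factors, so that $\tfrac12(v_1+v_2)$ solves the plate equation whenever $(\partial_t+\tfrac i2L)v_1=0$ and $(\partial_t-\tfrac i2L)v_2=0$. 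I would take $v_1(r,t)=q^{ref}(r,-it)$ and $v_2(r,t)=q^{ref}(r,it)$, where $q^{ref}$ is the kernel of the radial heat problem \eqref{eqasd}, \eqref{eqasdcond} recalled above: since $\partial_sq^{ref}=\tfrac12Lq^{ref}$, the chain rule gives $\partial_tq^{ref}(r,\pm it)=\pm\tfrac i2Lq^{ref}(r,\pm it)$, which are exactly the two first-order equations needed. The adjoint claim is handled the same way, with $L^{\ast}=\frac{\partial^2}{\partial r^2}-\frac1r\frac{\partial}{\partial r}+\frac{1}{r^2}$ and with $p^{ref}$, the law of reflecting Brownian motion in $C_R$ solving \eqref{seiundici}, in the role of $q^{ref}$.

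I would then set $\bar q^{ref}(r,t)=\tfrac12[q^{ref}(r,it)+q^{ref}(r,-it)]$ and run the short computation that gives \eqref{sequattordici}. Since $-1/i=i$, the factors $e^{-r^2/(2it)}$ and $e^{-R^4/(2r^2it)}$ turn into $e^{\,ir^2/(2t)}$ and $e^{\,iR^4/(2r^2t)}$, so $q^{ref}(r,it)$ and $q^{ref}(r,-it)$ are complex conjugates; hence $\bar q^{ref}(r,t)$ is simply the real part of $q^{ref}(r,it)=-\tfrac it[e^{\,ir^2/(2t)}+e^{\,iR^4/(2r^2t)}]$, and since the real part of $-\tfrac it e^{\,i\theta}$ equals $\tfrac1t\sin\theta$ we get precisely $\bar q^{ref}(r,t)=\tfrac1t\sin\frac{r^2}{2t}+\tfrac1t\sin\frac{R^4}{2r^2t}$. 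The same manipulation applied to $\tfrac12[p^{ref}(r,it)+p^{ref}(r,-it)]$, now carrying along the coefficients $\tfrac rt$ and $\tfrac{R^4}{r^3t}$, produces \eqref{seisedici}.

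It remains to verify the conditions in \eqref{condwww}. The PDE is automatic from the factorization. The function $t\mapsto\bar q^{ref}(r,t)$ is even in $t$, whence $\bar q^{ref}_t(r,0)=0$; and letting $t\to0^+$, both arguments $\pm it$ tend to $0$, so the initial datum $\delta(r)$ is inherited from $q^{ref}$: the oscillatory image term $t^{-1}\sin(R^4/2r^2t)$ tends distributionally to $0$, playing the role that the exponentially small $e^{-R^4/(2r^2t)}$ plays in the heat picture, exactly as in Sections 2--4. For the boundary condition, differentiating \eqref{sequattordici} gives $\partial_r\bar q^{ref}=\tfrac{r}{t^2}\cos\frac{r^2}{2t}-\tfrac{R^4}{r^3t^2}\cos\frac{R^4}{2r^2t}$, whose two terms are equal and opposite at $r=R$; equivalently, the Neumann property passes from $q^{ref}$ to $\bar q^{ref}$ because $\partial_r$ commutes with $s\mapsto\pm it$.

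The one place deserving explicit comment, and the main (mild) obstacle, is the boundary condition for the adjoint problem \eqref{seiquindici}: the plain normal derivative of $\bar p^{ref}$ does not vanish at $r=R$, so in \eqref{condwww} that condition must be read, for \eqref{seiquindici}, as the zero-flux (co-normal) condition naturally attached to the operator $\frac{\partial^2}{\partial r^2}-\frac1r\frac{\partial}{\partial r}+\frac{1}{r^2}$, i.e. the one already satisfied by $p^{ref}$; it then transfers to $\bar p^{ref}$ by the same commutation argument. Everything else --- the evaluation of the imaginary-time half-sums, and the fact that $q^{ref}$ and $p^{ref}$ solve their radial and adjoint heat equations with the stated data --- is routine and parallels Section 2.
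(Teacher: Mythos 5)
Your proposal follows essentially the same route as the paper: decouple the fourth-order operator into two conjugate Schr\"odinger factors and take the half-sum $\tfrac12[p^{ref}(r,it)+p^{ref}(r,-it)]$ (resp. $q^{ref}$), which by conjugacy is just the real part and yields \eqref{seisedici} and \eqref{sequattordici} directly. The paper only carries out the computation for $\bar p^{ref}$ and asserts the other case is analogous, so your additional checks of the initial and boundary data (and your remark that the Neumann condition for the adjoint problem must be read as the co-normal condition inherited from $p^{ref}$) are a harmless, indeed welcome, supplement rather than a departure.
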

\begin{proof}
We give the proof of \eqref{seisedici} since the same method easily leads to \eqref{sequattordici}. The equation appearing in \eqref{seiquindici} can be decoupled as
\begin{equation}
\left[ \frac{\partial}{\partial t} + \frac{i}{2}\left( \frac{\partial^2}{\partial r^2} -  \frac{1}{r}\frac{\partial}{\partial r} + \frac{1}{r^2} \right) \right] \left[ \frac{\partial}{\partial t} - \frac{i}{2}\left( \frac{\partial^2}{\partial r^2} -  \frac{1}{r}\frac{\partial}{\partial r} + \frac{1}{r^2} \right) \right] u = 0
\end{equation}
and each Schr\"odinger-type equation is formally similar to \eqref{seiundici} and we can thus write that
\begin{align*}
\bar{p}^{ref}(r,t) = & \frac{1}{2} \left[ p^{ref}(r, it) + p^{ref}(r, -it) \right]\\
= & \frac{1}{2}\left[ \frac{r}{it} e^{- \frac{r^2}{2it}} + \frac{R^4}{r^3 it} e^{- \frac{R^4}{2r^2 it}} + \frac{r}{-it} e^{-\frac{r^2}{-2it}} + \frac{R^4}{-2 r^3 it} e^{-\frac{R^4}{-2r^2 it}} \right]\\
= & \frac{r}{t}\sin \frac{r^2}{2t} + \frac{R^4}{r^3 t} \sin \frac{R^4}{2r^2 t}.
\end{align*}
\end{proof}
The isotropic structure of the solution permits us to write the form of the circular vibrating plate as
\begin{equation}
p(r,\theta, t) = \frac{1}{2\pi} \bar{p}^{ref}(r,t) = \frac{1}{2\pi t} \left[ r \sin \frac{r^2}{2t} + \frac{R^4}{r^3} \sin \frac{R^4}{2r^2 t} \right], \quad 0<r<R,\; 0< \theta < 2\pi. \label{expression618}
\end{equation}
By considering the change of variable $r^\prime = R^2/r$ in the following integral we get
\begin{align*}
\int_{C_R} p(r, \theta, t) \, dr\,d\theta = &  \frac{1}{2\pi t} \int_0^{2\pi} \int_0^R \left[ \frac{r}{t} \sin \frac{r^2}{2t} + \frac{R^4}{r^3 t} \sin \frac{R^4}{2 r^2 t} \right] dr\, d\theta\\
= & \frac{1}{2\pi t} \int_0^{2\pi}\left[ \int_0^R r \sin \frac{r^2}{2t} dr + \int_R^\infty r \sin \frac{r^2}{2t} dr \right] d\theta\\
= & \frac{1}{2\pi t} \int_0^{2\pi} \int_0^\infty r \sin \frac{r^2}{2t} dr d\theta =  \frac{1}{2\pi t} \int_{\mathbb{R}^2} \sin\frac{x^2 +y^2}{2t} dxdy\\
= & \frac{2(\sqrt{2t})^2}{2 \pi t} \left( \int_{-\infty}^{+\infty} \cos y^2 \, dy \right)^2 = 1.
\end{align*}

\begin{os}
\normalfont
The expression \eqref{expression618} in cartesian coordinates reads
\begin{equation}
p(x,y,t) = \frac{1}{2\pi t} \left[ \sin \frac{x^2 + y^2}{2t} + \frac{R^4}{(x^2+y^2)^4} \sin \frac{R^4}{2t (x^2 + y^2)^2} \right], \quad (x,y) \in C_R,\; t>0
\end{equation}
while the expression of $q(r, \theta, t) = \frac{1}{2\pi} \bar{q}^{ref}(r,t)$ in cartesian coordinates reads
\begin{equation}
q(x,y,t) = \frac{1}{2\pi t} \frac{1}{\sqrt{x^2 + y^2}} \left[ \sin \frac{x^2 + y^2}{2t} + \sin \frac{R^4}{2t (x^2 + y^2)^2} \right], \quad (x,y) \in C_R,\; t>0.
\end{equation}
\end{os}

\begin{figure}[ht]
\centering
\includegraphics[scale=0.5]{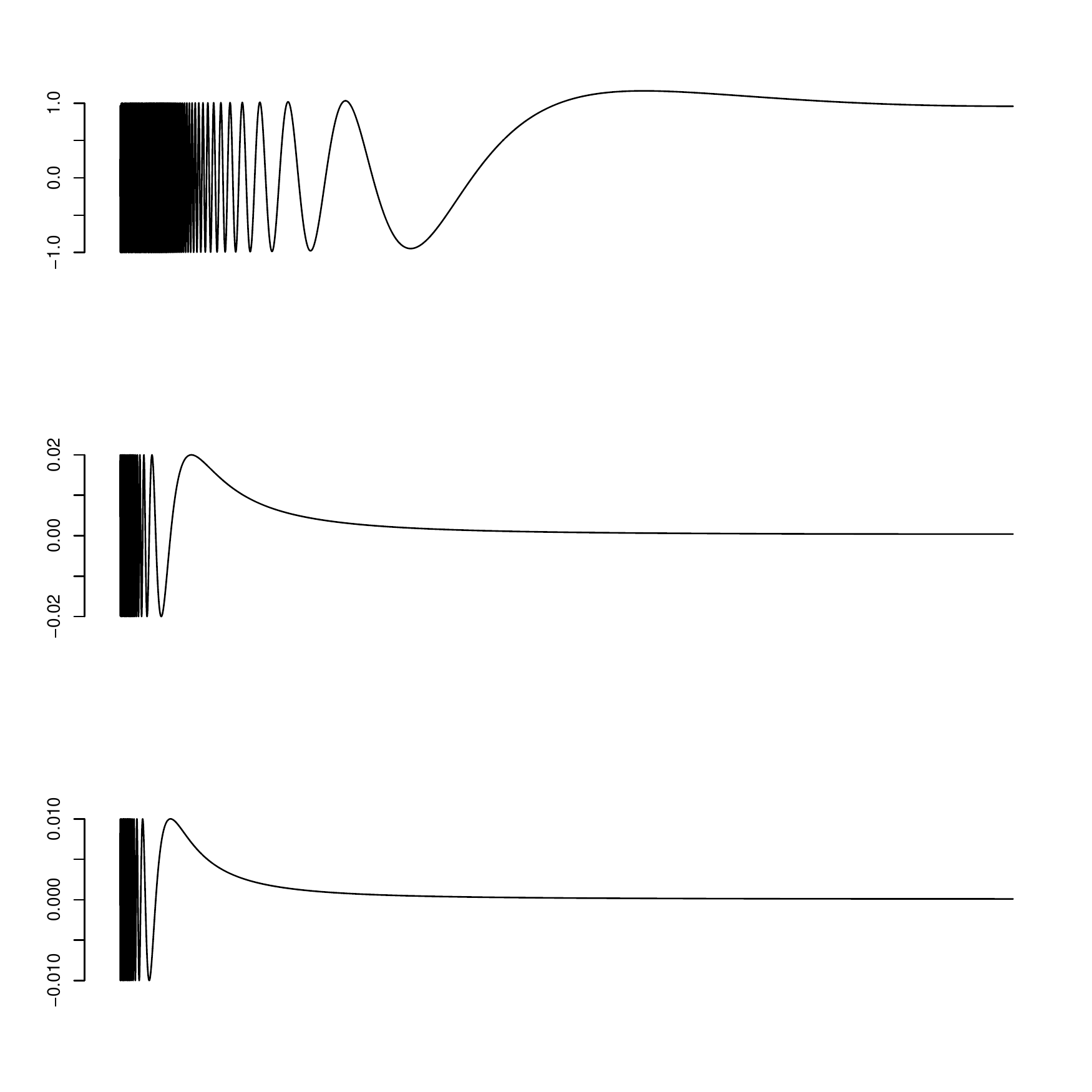}
\caption{The profile of the function \eqref{sequattordici} with $0 < r < R=1$ and $t=1,50,100$.}
\label{fiDg}
\end{figure}

The cross-section of the vibrating disk (formula \eqref{sequattordici}) is depicted in figure \ref{fiDg} in three different instants. It shows that the vibrations initially extend to the whole plate and they fade off outside a neighbourhood of the origin as time passes.

\section{Subordination of the Fresnel pseudo-process with different processes}

The composition of the Fresnel process $F(t)$, $t>0$ with different processes produces interesting results. The first one has been outlined in the previous sections and leads to the following statement concerning pseudo-processes of fourth-order which have been dealt by several authors and from different viewpoints (see  Hochberg \cite{Hoc78}, Nikitin and Orsingher \cite{NO00}, Nishioka \cite{Nis97}, Lachal \cite{Lach03,Lach07}, Benachour et al. \cite{BRV96}).
\begin{te}
The pseudo-process $F(|B(t)|)$, $t>0$ has measure density which satisfies the fourth-order heat equation
\begin{equation}
\left\lbrace \begin{array}{ll} \frac{\partial u}{\partial t} = - \frac{1}{2^3} \frac{\partial^4 u}{\partial x^4} &  x \in \mathbb{R},\, t>0\\ u(x,0)= \delta(x). \end{array} \right .
\label{eqsubFren}
\end{equation}
\end{te}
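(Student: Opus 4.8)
The plan is to pass to Fourier transforms and to exploit the fact that $|B(t)|$, $t>0$, is a genuine non-negative random variable that we compose, independently, with the Fresnel pseudo-process. First I would write the candidate measure density of $F(|B(t)|)$ by conditioning on the value of $|B(t)|$, whose law is the folded Gaussian $Pr\{|B(t)|\in ds\}=\frac{2}{\sqrt{2\pi t}}\,e^{-s^2/(2t)}\,ds$, $s>0$:
\[
q(x,t)=\int_0^\infty \frac{1}{\sqrt{2\pi s}}\cos\left(\frac{x^2}{2s}-\frac{\pi}{4}\right)\frac{2}{\sqrt{2\pi t}}\,e^{-\frac{s^2}{2t}}\,ds .
\]
The Fresnel kernel \eqref{funQW} is $O(s^{-1/2})$ as $s\to 0^+$ and the Gaussian factor controls the integrand as $s\to\infty$, so this integral converges absolutely; this is also what legitimises the interchange of integrations used in the next step.

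Next I would compute the Fourier transform $\widehat q(\beta,t)=\int_{\mathbb R}e^{i\beta x}q(x,t)\,dx$. Interchanging the $x$- and $s$-integrations and invoking \eqref{jkl}, that is $\int_{\mathbb R}e^{i\beta x}\frac{1}{\sqrt{2\pi s}}\cos(\frac{x^2}{2s}-\frac{\pi}{4})\,dx=\cos\frac{\beta^2 s}{2}$, gives
\[
\widehat q(\beta,t)=\int_0^\infty \cos\frac{\beta^2 s}{2}\,\frac{2}{\sqrt{2\pi t}}\,e^{-\frac{s^2}{2t}}\,ds=\int_{\mathbb R}\cos\frac{\beta^2 s}{2}\,\frac{e^{-\frac{s^2}{2t}}}{\sqrt{2\pi t}}\,ds=E\,e^{\,i\frac{\beta^2}{2}B(t)}=e^{-\frac{\beta^4 t}{8}},
\]
where in the second equality I used that the integrand is even in $s$, and in the last one that this equals the characteristic function of $B(t)\sim N(0,t)$ evaluated at $\beta^2/2$ (its imaginary part vanishing by symmetry).

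Finally I would identify $\widehat q(\beta,t)=e^{-\beta^4 t/2^3}$ with the Fourier transform of the fundamental solution of \eqref{eqsubFren}: it solves $\partial_t\widehat q=-\frac{\beta^4}{2^3}\widehat q$ with $\widehat q(\beta,0)=1$, and since $\widehat{\partial_x^4 q}=\beta^4\widehat q$ this is precisely the transformed version of $\partial_t u=-\frac{1}{2^3}\partial_x^4 u$, $u(x,0)=\delta(x)$. Inverting the transform then yields the assertion, and $q$ is seen to be the familiar fundamental solution of the biquadratic heat equation.

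I expect the one genuinely delicate point to be the rigorous meaning of the subordination $F(|B(t)|)$: one must argue that composing the sign-varying pseudo-process with the independent honest random variable $|B(t)|$ amounts to integrating the finite-dimensional signed measures of $F$ against the law of $|B(t)|$, and that this operation is well defined here thanks to the absolute convergence observed above — the same device already used for iterated Brownian motion in Orsingher and Beghin \cite{OB09}. Once this interpretation is granted, the remaining steps are routine manipulations of Fresnel and Gaussian integrals.
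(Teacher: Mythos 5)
Your proposal is correct and follows essentially the same route as the paper: both write the density of $F(|B(t)|)$ as the integral of the Fresnel kernel against the folded Gaussian law of $|B(t)|$, pass to Fourier transforms via \eqref{jkl}, unfold the integral by symmetry to recognize the characteristic function of $B(t)$ at $\beta^2/2$, and obtain $e^{-\beta^4 t/2^3}$, which is the transformed fundamental solution of \eqref{eqsubFren}. Your added remarks on absolute convergence and on the meaning of subordinating a signed measure are a welcome supplement but do not change the argument.
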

\begin{proof}
We write
\begin{equation}
q(x,t) =2 \int_0^\infty \frac{1}{\sqrt{2\pi s}} \cos\left( \frac{x^2}{2s} - \frac{\pi}{4} \right) \frac{e^{-\frac{s^2}{2t}}}{\sqrt{2\pi t}} ds. \label{sdhj}
\end{equation}
The Fourier transform of \eqref{sdhj} reads
\begin{align}
\int_{\mathbb{R}} e^{i\beta x}q(x,t)dx = & \int_{\mathbb{R}} e^{i\beta x} \int_{0}^{\infty} \left[ \frac{e^{-\frac{x^2}{2(is)}}}{\sqrt{2\pi (is)}} + \frac{e^{-\frac{x^2}{2(-is)}}}{\sqrt{2\pi (-is)}} \right] \frac{e^{-\frac{s^2}{2t}}}{\sqrt{2\pi t}} ds\label{seitre}\\
= & \int_0^\infty \left[ e^{-i\frac{\beta^2}{2}s }+ e^{i\frac{\beta^2}{2}s} \right] \frac{e^{-\frac{s^2}{2t}}}{\sqrt{2\pi t}} ds =  \int_{-\infty}^{+\infty} e^{-i\frac{\beta^2}{2}s }\frac{e^{-\frac{s^2}{2t}}}{\sqrt{2\pi t}} ds =  \exp\left( - \frac{\beta^4}{2^3}t \right). \nonumber
\end{align}
Clearly \eqref{seitre} satisfies the Fourier transform of the equation \eqref{eqsubFren}. Result \eqref{seitre} can also be obtained by exploting \eqref{jkl} and thus
\begin{align*}
\int_{\mathbb{R}} e^{i \beta x} q(x, t) dx = & 2 \int_0^\infty \left[ \int_{\mathbb{R}} \frac{e^{i \beta x}}{\sqrt{2\pi s}} \cos \left( \frac{x^2}{2s} - \frac{\pi}{4} \right) dx \right] \frac{e^{-\frac{s^2}{2t}}}{\sqrt{2\pi t}} ds\\
= & 2 \int_0^\infty \cos \frac{\beta^2 s}{2} \frac{e^{-\frac{s^2}{2t}}}{\sqrt{2 \pi t}} ds =  \int_{\mathbb{R}} e^{i \beta x} \frac{e^{-\frac{s^2}{2t}}}{\sqrt{2\pi t}} ds =  \exp\left(-\frac{\beta^4 t}{2^3}\right).
\end{align*}
\end{proof}
If $B(t)$, $t>0$ has volatility equal to $\sigma^2$ then the composition $F(|B(t)|)$, $t>0$, has Fourier transform equal to
\[ \int_{\mathbb{R}} e^{i \beta x} q(x,t) dx = \exp\left( -\frac{\beta^4 \sigma^2 t}{2^3} \right) \]
and for $\sigma^2=2$ leads to the pseudo-process with density \eqref{sdhj}. 

\begin{os}
\normalfont
This result has been obtained in different ways in \cite{BRV96, HO96}. Result \eqref{sdhj} shows that the fundamental solution to the fourth-order equation \eqref{eqsubFren} can be viewed as the profile of a vibrating rod with a Gaussian weight which damps the oscillations, see figure \ref{lab4ord}.
\end{os}

\begin{figure}[ht]
\centering
\includegraphics[scale=.5]{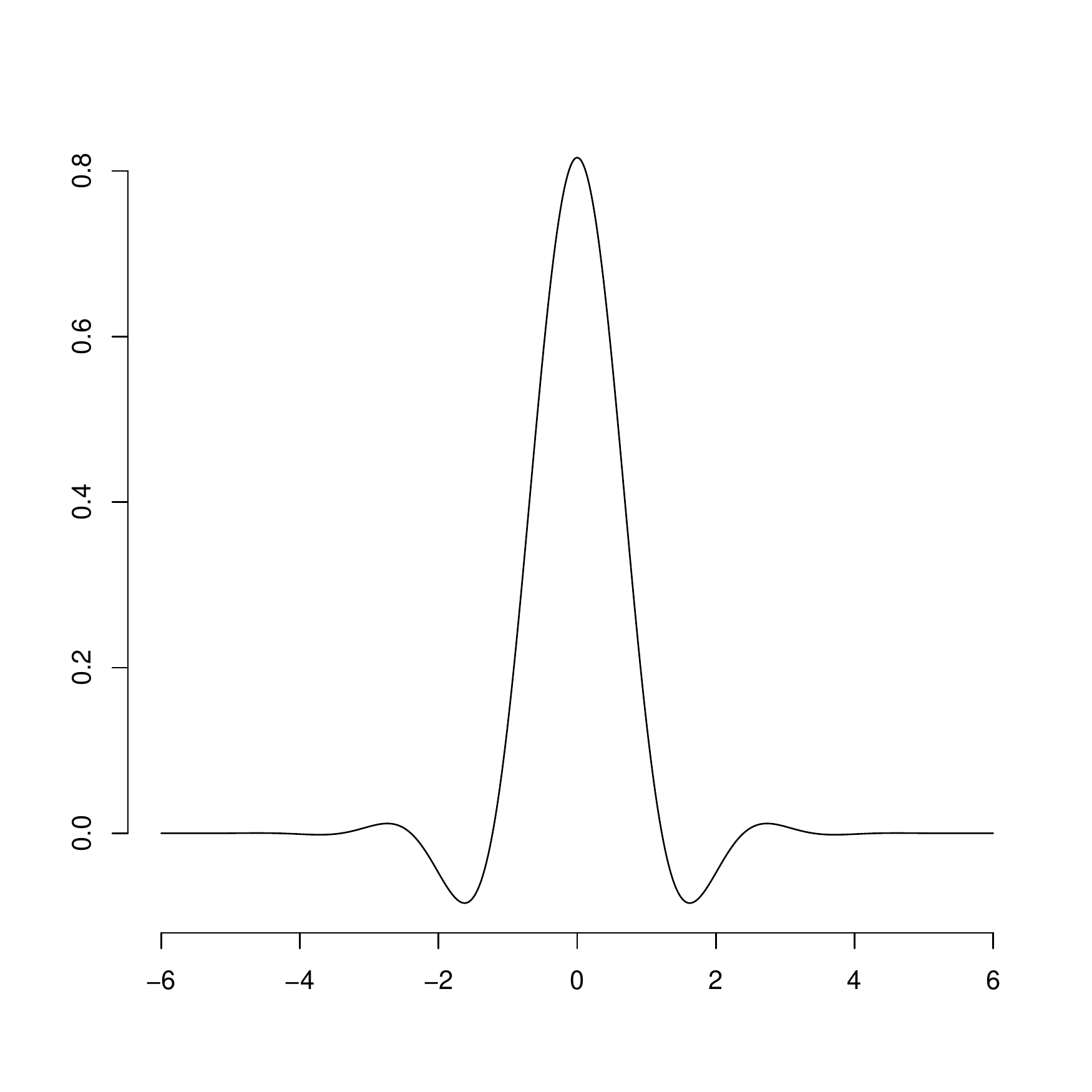} 
\caption{The solution to the fourth-order heat equation \eqref{eqsubFren}.}
\label{lab4ord}
\end{figure}

We examine here the composition of the Fresnel pseudo-process $F(t)$, $t>0$ with the first passage time $T_t$, $t>0$ (independent from $F$) of a Brownian motion $B$. The composition $F(T_t)$, $t>0$ has the relevant property that its one-dimensional distributions are true probability distributions as the next Theorem shows.

\begin{te}
The r.v. $F(T_t)$ has density
\begin{equation}
Pr\{ F(T_t) \in dx \}/dx = \frac{t}{\pi \sqrt{2}} \frac{t^2 + x^2}{t^4 + x^4}, \quad x \in \mathbb{R},\, t>0\label{trueD}
\end{equation}
and \eqref{trueD} solves the fourth-order equation 
\begin{equation}
\left( \frac{\partial^4}{\partial t^4} + \frac{\partial^4}{\partial x^4} \right) u=0. \label{truePDE}
\end{equation}
\end{te}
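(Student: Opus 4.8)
The plan is to compute the law of $F(T_t)$ explicitly as a subordination integral, read off the closed form \eqref{trueD}, and then check by inspection that it is a probability density and that it satisfies \eqref{truePDE}. Writing $u(x,t)$ for the density $Pr\{F(T_t)\in dx\}/dx$ and recalling the first-passage density $Pr\{T_t\in ds\}/ds=\frac{t}{\sqrt{2\pi s^3}}e^{-t^2/(2s)}$, $s>0$, the one-dimensional measure of $F$ in \eqref{funQW} gives
\[
u(x,t)=\int_0^\infty \frac{1}{\sqrt{2\pi s}}\cos\left(\frac{x^2}{2s}-\frac{\pi}{4}\right)\frac{t}{\sqrt{2\pi s^3}}\,e^{-\frac{t^2}{2s}}\,ds .
\]
The substitution $w=1/s$ turns this into the absolutely convergent integral $\frac{t}{2\pi}\int_0^\infty\cos\!\big(\frac{x^2w}{2}-\frac{\pi}{4}\big)e^{-t^2w/2}\,dw$. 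Writing $\cos(\theta-\tfrac{\pi}{4})=\Re\big(e^{-i\pi/4}e^{i\theta}\big)$ and using $\int_0^\infty e^{w(ix^2-t^2)/2}\,dw=\frac{2}{t^2-ix^2}$ yields $u(x,t)=\frac{t}{\pi}\,\Re\!\big(e^{-i\pi/4}(t^2+ix^2)\big)/(t^4+x^4)$, and since $e^{-i\pi/4}=\frac{1-i}{\sqrt2}$ with $\Re\big((1-i)(t^2+ix^2)\big)=t^2+x^2$, this is exactly \eqref{trueD}.

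Next I would verify that \eqref{trueD} is a genuine probability law: nonnegativity is immediate since numerator and denominator are strictly positive, and $\int_{\mathbb R}\frac{t(t^2+x^2)}{\pi\sqrt2(t^4+x^4)}\,dx=1$ follows from the scaling $x=ty$ together with the residue evaluation $\int_{\mathbb R}\frac{1+y^2}{1+y^4}\,dy=\pi\sqrt2$ (alternatively it is the value at $\beta=0$ of the Fourier transform computed below). It is worth a remark that here a sign-varying pseudo-density, composed with the honest random time $T_t$, produces a bona fide distribution through cancellation of the negative parts, in analogy with what was already observed for $F(|B(t)|)$.

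For the differential equation I would pass to Fourier transforms. Using \eqref{jkl} for the inner integral and integrating the bounded function $\cos(\beta^2s/2)$ against the law of $T_t$,
\[
\int_{\mathbb R}e^{i\beta x}u(x,t)\,dx=\int_0^\infty\cos\frac{\beta^2 s}{2}\,\frac{t}{\sqrt{2\pi s^3}}e^{-\frac{t^2}{2s}}\,ds=\Re\,E\!\left[e^{i\frac{\beta^2}{2}T_t}\right]=e^{-\frac{t|\beta|}{\sqrt2}}\cos\frac{t|\beta|}{\sqrt2},
\]
where I invoke $E[e^{-\lambda T_t}]=e^{-t\sqrt{2\lambda}}$ at $\lambda=-i\beta^2/2$ with the branch $\sqrt{2\lambda}=|\beta|e^{-i\pi/4}$. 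Writing the right-hand side as $\Re\,e^{\lambda t}$ with $\lambda=|\beta|e^{3i\pi/4}$, so that $\lambda^4=-\beta^4$, gives $\partial_t^4\widehat u(\beta,t)=-\beta^4\widehat u(\beta,t)$, which is precisely the Fourier transform of \eqref{truePDE} since $\widehat{\partial_x^4 u}=\beta^4\widehat u$; inversion yields $u_{tttt}+u_{xxxx}=0$ and also fixes $u(x,0)=\delta(x)$. One could instead differentiate the closed form \eqref{trueD} four times in each variable, but the Fourier route is shorter.

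I expect the only genuine subtlety to be bookkeeping of the branch of $\sqrt{2\lambda}$ on the imaginary axis — equivalently, the placement of the $e^{\pm i\pi/4}$ factors — since a slip there flips signs in the final density; the analytic continuation itself is harmless because the density of $T_t$ is integrable, so $\lambda\mapsto E[e^{-\lambda T_t}]$ is continuous up to $\Re\lambda=0$, and the $w=1/s$ substitution in the first step makes even the direct evaluation of $u(x,t)$ an ordinary absolutely convergent computation.
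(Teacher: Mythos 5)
Your proposal is correct, and it reaches both halves of the statement by routes that partly coincide with and partly diverge from the paper's. The computation of the closed form \eqref{trueD} is essentially the paper's: the authors also write $\cos(\theta-\tfrac{\pi}{4})$ as a pair of complex exponentials and evaluate $\int_0^\infty e^{-a/s}s^{-2}\,ds=a^{-1}$, which is your $w=1/s$ substitution in disguise; both land on $\frac{t}{\pi}\,\Re\bigl(e^{-i\pi/4}(t^2+ix^2)\bigr)/(t^4+x^4)$. Where you genuinely differ is in the verification of \eqref{truePDE}. The paper works directly on the subordination integral: it uses the heat-type identity $\bigl(\partial_t^2-2\partial_s\bigr)\frac{t e^{-t^2/(2s)}}{\sqrt{2\pi s^3}}=0$ to convert $\partial_t^4$ into $4\partial_s^2$ acting on the first-passage density, integrates by parts twice in $s$ to move $\partial_s^2$ onto the Fresnel kernel, and then invokes the Fresnel equation $\partial_s^2 f=-\tfrac{1}{4}\partial_x^4 f$. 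That argument is structural (it would work for any subordinator density satisfying the same backward equation) but silently assumes the boundary terms at $s=0$ and $s=\infty$ vanish. Your route instead computes the Fourier transform $\widehat u(\beta,t)=\Re\,e^{\lambda t}$ with $\lambda=|\beta|e^{3i\pi/4}$, $\lambda^4=-\beta^4$, and reads off $\partial_t^4\widehat u=-\beta^4\widehat u$; this is more computational, requires the branch bookkeeping you flag (which you handle correctly: $\sqrt{-i\beta^2}=|\beta|e^{-i\pi/4}$ is the root with positive real part, and continuity up to $\Re\lambda=0$ follows by dominated convergence), and has the side benefit of exhibiting the transform $e^{-t|\beta|/\sqrt2}\cos(t|\beta|/\sqrt2)$ explicitly, from which the normalization and the initial condition $u(x,0^+)=\delta(x)$ drop out for free. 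Your explicit integration check $\int_{\mathbb R}\frac{1+y^2}{1+y^4}\,dy=\pi\sqrt2$ is also correct. A remark following the paper's proof gives yet a third derivation (superposing Cauchy laws at imaginary-rotated times $te^{\pm i\pi/4}$), which is closest in spirit to your Fourier factorization $\lambda^4=-\beta^4$.
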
 
\begin{proof}
We first of all prove that
\begin{equation}
 Pr\{ F(T_t) \in dx \} = dx \int_0^\infty \frac{1}{\sqrt{2\pi s}} \cos\left( \frac{x^2}{2s} - \frac{\pi}{4} \right) \frac{t e^{-\frac{t^2}{2s}}}{\sqrt{2\pi s^3}} ds
\end{equation}
satisfies equation \eqref{truePDE}. Since
\[ \left( \frac{\partial^2}{\partial t^2} - 2 \frac{\partial}{\partial s} \right) \frac{t e^{-\frac{t^2}{2s}}}{\sqrt{2\pi s^3}} =0 \]
we have that
\begin{align*}
\frac{\partial^4}{\partial t^4} Pr\{ F(T_t) \in dx \}/dx = & \int_0^\infty  \frac{1}{\sqrt{2\pi s}} \cos\left( \frac{x^2}{2s} - \frac{\pi}{4} \right) \frac{\partial^4}{\partial t^4} \frac{t e^{-\frac{t^2}{2s}}}{\sqrt{2\pi s^3}} ds\\
= & \int_0^\infty  \frac{2^2}{\sqrt{2\pi s}} \cos\left( \frac{x^2}{2s} - \frac{\pi}{4} \right) \frac{\partial^2}{\partial s^2} \frac{t e^{-\frac{t^2}{2s}}}{\sqrt{2\pi s^3}} ds\\
= & \int_0^\infty  \frac{\partial^2}{\partial s^2} \left[ \frac{2^2}{\sqrt{2\pi s}} \cos\left( \frac{x^2}{2s} - \frac{\pi}{4} \right) \right] \frac{t e^{-\frac{t^2}{2s}}}{\sqrt{2\pi s^3}} ds\\
= & - \frac{\partial^4}{\partial x^4} \int_0^\infty  \frac{1}{\sqrt{2\pi s}} \cos\left( \frac{x^2}{2s} - \frac{\pi}{4} \right) \frac{t e^{-\frac{t^2}{2s}}}{\sqrt{2\pi s^3}} ds.
\end{align*}
We now prove result \eqref{trueD}.
\begin{align*}
Pr\{ F(T_t) \in dx \}/dx = & \frac{t}{2^2 \pi} \int_0^\infty \left( e^{i\frac{x^2}{2s} - \frac{t^2}{2s} - i\frac{\pi}{4}} +  e^{-i\frac{x^2}{2s} - \frac{t^2}{2s} + i\frac{\pi}{4}} \right) \frac{ds}{s^2}\\
= & \frac{t}{2\pi} \left( \frac{e^{-i \frac{\pi}{4}}}{t^2 - ix^2} + \frac{e^{i \frac{\pi}{4}}}{t^2 + ix^2}\right)\\
= & \frac{t\, e^{-i \frac{\pi}{4}}}{2\pi} \left( \frac{1}{t^2 - ix^2} + \frac{i}{t^2 + ix^2}\right)\\
= & \frac{t\, e^{-i \frac{\pi}{4}}}{2\pi} \left( \frac{(t^2 + x^2)(1+ e^{i \frac{\pi}{2}})}{t^4 + x^4}\right)\\
= & \frac{t}{2\pi} \left( \frac{(t^2 + x^2)(e^{-i \frac{\pi}{4}}+ e^{i \frac{\pi}{4}})}{t^4 + x^4}\right) =  \frac{t}{\sqrt{2} \pi} \frac{t^2 + x^2}{t^4 + x^4}.
\end{align*}
\end{proof}

\begin{figure}[ht]
\centering
\includegraphics[scale=.5]{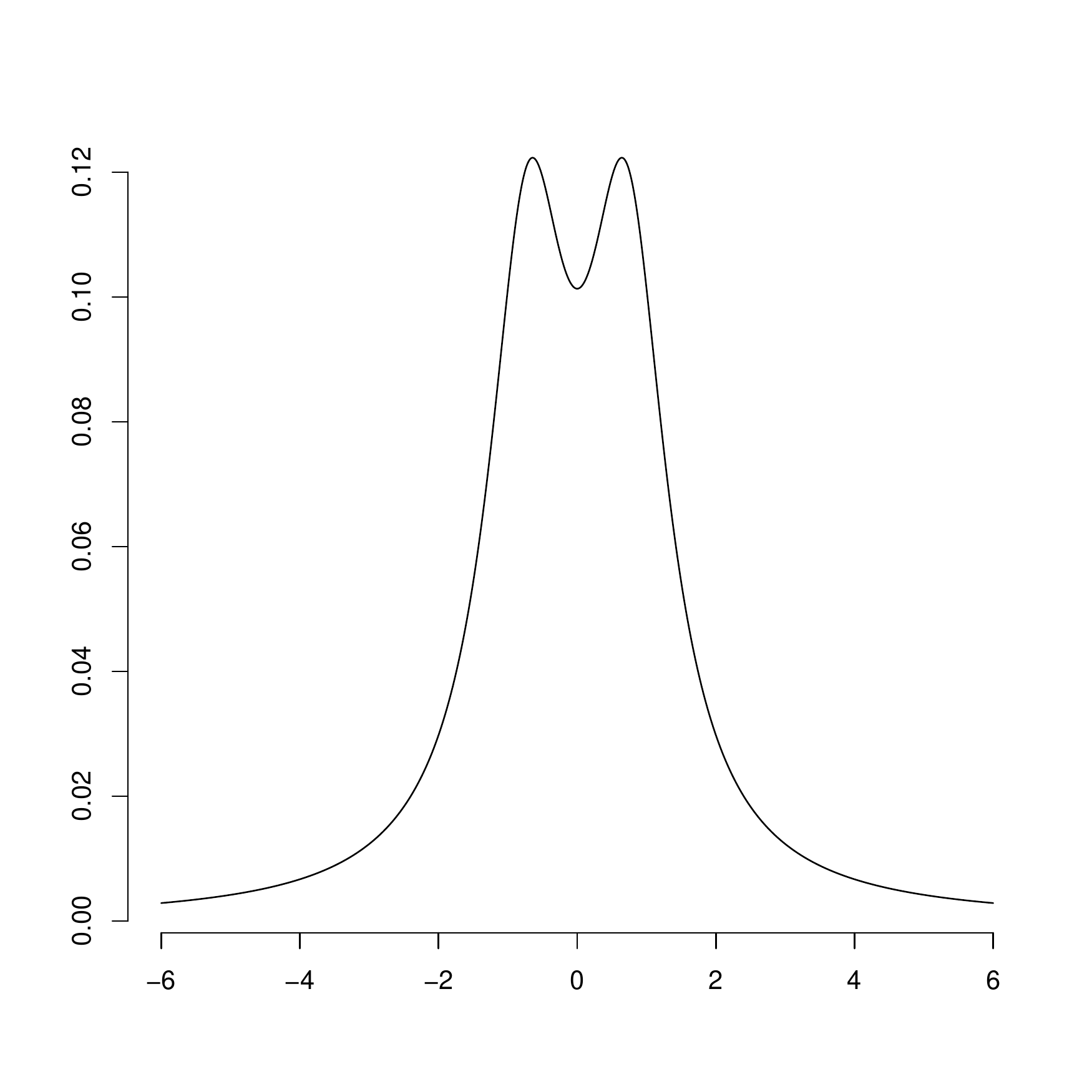} 
\caption{The density of the double-Cauchy r.v. $F(T_t)$.}
\end{figure}

\begin{os}
\normalfont
We can confirm result \eqref{trueD} by means of a different approach. Since equation \eqref{truePDE} can be written as 
\begin{equation}
\left( \frac{\partial^2}{\partial t^2} + e^{i\frac{\pi}{4}} \frac{\partial^2}{\partial x^2} \right) \left( \frac{\partial^2}{\partial t^2} + e^{-i\frac{\pi}{4}} \frac{\partial^2}{\partial x^2} \right) u = 0 \label{eqLap}
\end{equation}
each component of \eqref{eqLap} can be reduced to  a Laplace equation by means of the time transformation $t^\prime = e^{\pm \frac{\pi}{4}} t$. Therefore, the solution to \eqref{eqLap} can be organized as
\begin{equation}
Pr\{ F(T_t) \in dx \}/dx = \frac{1}{2\pi} \left[ \frac{t e^{i\frac{\pi}{4}}}{(t e^{i\frac{\pi}{4}})^2 + x^2} + \frac{t e^{-i\frac{\pi}{4}}}{(t e^{-i\frac{\pi}{4}})^2 + x^2} \right] = \frac{t}{\pi \sqrt{2}} \frac{t^2 + x^2}{t^4 + x^4}.
\end{equation}
This is tantamount to considering a Cauchy process $C$ whose time either flows on the positive or negative imaginary axis. The direction of time is initially chosen with equal probability.
\end{os}

\begin{te}
For the $n$-th order Fresnel iterated pseudo-process 
\[ \mathcal{F}_n(t) = F_1(|F_2(|\ldots F_{n+1}(t)  \ldots |)|), \quad t>0 \]
the governing equation of the measure density
\begin{equation}
\mu\{\mathcal{F}_n(t) \in dx \}/dx = 2^n \int_0^\infty \ldots \int_0^\infty \prod_{j=1}^{n} \frac{ds_{j+1}}{\sqrt{2\pi s_{j+1}}}\cos\left( \frac{s^2_{j}}{2s_{j+1}} - \frac{\pi}{4}\right)  d_{s_j} \label{densmulop}
\end{equation}
with $x=s_1$ and $t=s_{n+1}$, satsfies the equation
\begin{equation}
\frac{\partial^2 u}{\partial t^2}(x,t) = - 2^{-2(2^{n+1} -1)} \frac{\partial^{2^{n+2}}u}{\partial x^{2^{n+2}}}(x,t), \quad x \in \mathbb{R},\, t>0 \label{pdeOOOO}
\end{equation}
subject to the initial conditions
\begin{equation}
\left\lbrace 
\begin{array}{l} u(x,0)=\delta(x),\\u_t(x,0)=0.  \end{array}
\right .
\end{equation}
\end{te}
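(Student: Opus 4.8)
The plan is to prove the statement by induction on $n$, carried out on the Fourier transform in the space variable; passing to Fourier transforms collapses the nested integral \eqref{densmulop} into an elementary one-step recursion and sidesteps any delicate integration by parts with the oscillating Fresnel kernel. Throughout I write $u_0(x,s)=\frac{1}{\sqrt{2\pi s}}\cos\left(\frac{x^2}{2s}-\frac{\pi}{4}\right)$ for the Fresnel kernel \eqref{funQW} and $f_n(x,t)=\mu\{\mathcal{F}_n(t)\in dx\}/dx$. The first thing I would record is that every $f_n$ is even in $x$: this holds for $f_0=u_0$, and is inherited at each stage because the outermost kernel in \eqref{densmulop} is even in its first argument. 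Unravelling \eqref{densmulop} by one layer, and noting that $F_2(|F_3(|\ldots F_{n+1}(t)\ldots|)|)$ has the same law as $\mathcal{F}_{n-1}(t)$, one obtains for $n\ge1$
\[ f_n(x,t)=\int_0^\infty u_0(x,s)\,2 f_{n-1}(s,t)\, ds, \]
i.e. $\mathcal{F}_n$ is $F_1$ subordinated to an independent copy of $|\mathcal{F}_{n-1}(t)|$, whose folded density on $s>0$ equals $2 f_{n-1}(s,t)$.

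Next I would take the Fourier transform in $x$. Using $\int_{\mathbb R}e^{i\beta x}u_0(x,s)\,dx=\cos\frac{\beta^2 s}{2}$ from \eqref{jkl} together with the evenness of $f_{n-1}$, the displayed identity becomes the recursion
\[ \widehat{f_n}(\beta,t)=\int_0^\infty \cos\left(\frac{\beta^2 s}{2}\right) 2 f_{n-1}(s,t)\, ds=\int_{\mathbb R} e^{i\frac{\beta^2}{2}s} f_{n-1}(s,t)\, ds=\widehat{f_{n-1}}\left(\frac{\beta^2}{2},\,t\right). \]
Starting from $\widehat{f_0}(\beta,t)=\cos\frac{\beta^2 t}{2}$ (which is \eqref{jkl}), this recursion gives by induction $\widehat{f_n}(\beta,t)=\cos\left(2^{-(2^{n+1}-1)}\beta^{2^{n+1}}t\right)$, the inductive step being the arithmetic identity $2^{-(2^n-1)}\left(\frac{\beta^2}{2}\right)^{2^n}=2^{-(2^{n+1}-1)}\beta^{2^{n+1}}$. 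To finish I would observe that $\cos(\omega t)$ with $\omega=2^{-(2^{n+1}-1)}\beta^{2^{n+1}}$ solves $\partial_t^2 U=-\omega^2 U=-2^{-2(2^{n+1}-1)}\beta^{2^{n+2}}U$ subject to $U(\beta,0)=1$, $U_t(\beta,0)=0$, and since $(i\beta)^{2^{n+2}}=\beta^{2^{n+2}}$ this is precisely the Fourier transform of the Cauchy problem \eqref{pdeOOOO} (the condition $U(\beta,0)=1$ giving $u(x,0)=\delta(x)$ and $U_t(\beta,0)=0$ giving $u_t(x,0)=0$ upon inversion). The base case $n=0$ is nothing but \eqref{pdeProbtm}: $\mathcal{F}_0(t)=F_1(t)$ has density $u_0$, which solves $\frac{\partial^2 u}{\partial t^2}=-2^{-2}\frac{\partial^4 u}{\partial x^4}$, and this coincides with \eqref{pdeOOOO} for $n=0$ since $2^{-2(2^1-1)}=2^{-2}$ and $2^{0+2}=4$.

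Since almost everything above is bookkeeping of powers of $2$, the main obstacle I anticipate is analytic rather than algebraic: exchanging the Fourier integral in $x$ with the $s$-integral in $f_n(x,t)=\int_0^\infty u_0(x,s)\,2 f_{n-1}(s,t)\,ds$ (and, more generally, the repeated Fubini needed to unfold the $n$-fold integral \eqref{densmulop}) has to be justified carefully, because $u_0(x,s)$ decays only like $s^{-1/2}$ and oscillates strongly as $s\to\infty$. I would handle this exactly as in the proof of Theorem \ref{tretre} and as in the analogous treatments of higher-order heat-type pseudo-processes (Lachal \cite{Lach03}, Hochberg \cite{Hoc78}), where the integrals are understood as conditionally convergent Fresnel-type integrals. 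A secondary point to keep track of is the evenness of each $f_{n-1}$, which is what legitimises the passage from $\int_0^\infty\cos(\beta^2 s/2)\,2f_{n-1}(s,t)\,ds$ to $\int_{\mathbb R}e^{i\beta^2 s/2}f_{n-1}(s,t)\,ds$ in the recursion.
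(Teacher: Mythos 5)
Your proposal is correct and follows essentially the same route as the paper: the Fourier transform in $x$ is computed by integrating out one subordination layer at a time via formula \eqref{jkl} (using the evenness of the inner density to pass from a half-line cosine integral to a full-line exponential one), yielding $\widehat{f_n}(\beta,t)=\cos\left(2^{-(2^{n+1}-1)}\beta^{2^{n+1}}t\right)$, from which the governing equation and initial conditions follow. Your packaging of the iteration as an explicit induction with the recursion $\widehat{f_n}(\beta,t)=\widehat{f_{n-1}}\left(\beta^2/2,t\right)$ is a cleaner bookkeeping of the same computation the paper carries out by "iterating the procedure".
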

\begin{proof}
One can prove the statement  of the Theorem by evaluating the Fourier transform of \eqref{densmulop} by successively applying formula \eqref{jkl}. The Fourier integral w.r.t. $x$ yields
\begin{equation}
2^n \int_0^\infty \ldots \int_0^\infty \cos \frac{\beta^2 s_1}{2} \frac{ds_{1}}{\sqrt{2\pi s_{2}}}\cos\left( \frac{s^2_{1}}{2s_{2}} - \frac{\pi}{4}\right) \ldots \frac{ds_{n}}{\sqrt{2\pi t}}\cos\left( \frac{s^2_{n}}{2t} - \frac{\pi}{4}\right).
\end{equation}
The integral w.r.t. $s_1$ yields
\begin{align*}
2 \int_0^\infty \cos \frac{\beta^2 s_1}{2} \frac{1}{\sqrt{2\pi s_{2}}}\cos\left( \frac{s^2_{1}}{2s_{2}} - \frac{\pi}{4}\right) ds_{1} = & \int_{-\infty}^{+\infty} \cos \frac{\beta^2 s_1}{2} \frac{1}{\sqrt{2\pi s_{2}}}\cos\left( \frac{s^2_{1}}{2s_{2}} - \frac{\pi}{4}\right) ds_{1}\\
= & \int_{-\infty}^{+\infty} e^{-\frac{\beta^2 s_1}{2}} \frac{1}{\sqrt{2\pi s_{2}}}\cos\left( \frac{s^2_{1}}{2s_{2}} - \frac{\pi}{4}\right) ds_{1}\\
= & \cos \left( \left(\frac{\beta^2}{2}\right)^2 \frac{s_2}{2}\right).
\end{align*}
By iterating this procedure we arrive at the final relult
\[ \int_{-\infty}^{+\infty} e^{i \beta x} \mu\{ \mathcal{F}(t) \in dx \} = \cos \left(2t \left( \frac{\beta}{2}\right)^{2^{n+1}} \right). \]
With this at hand result  \eqref{pdeOOOO} immediately follows.
\end{proof}

\begin{figure}[ht]
\centering
\includegraphics[scale=0.85]{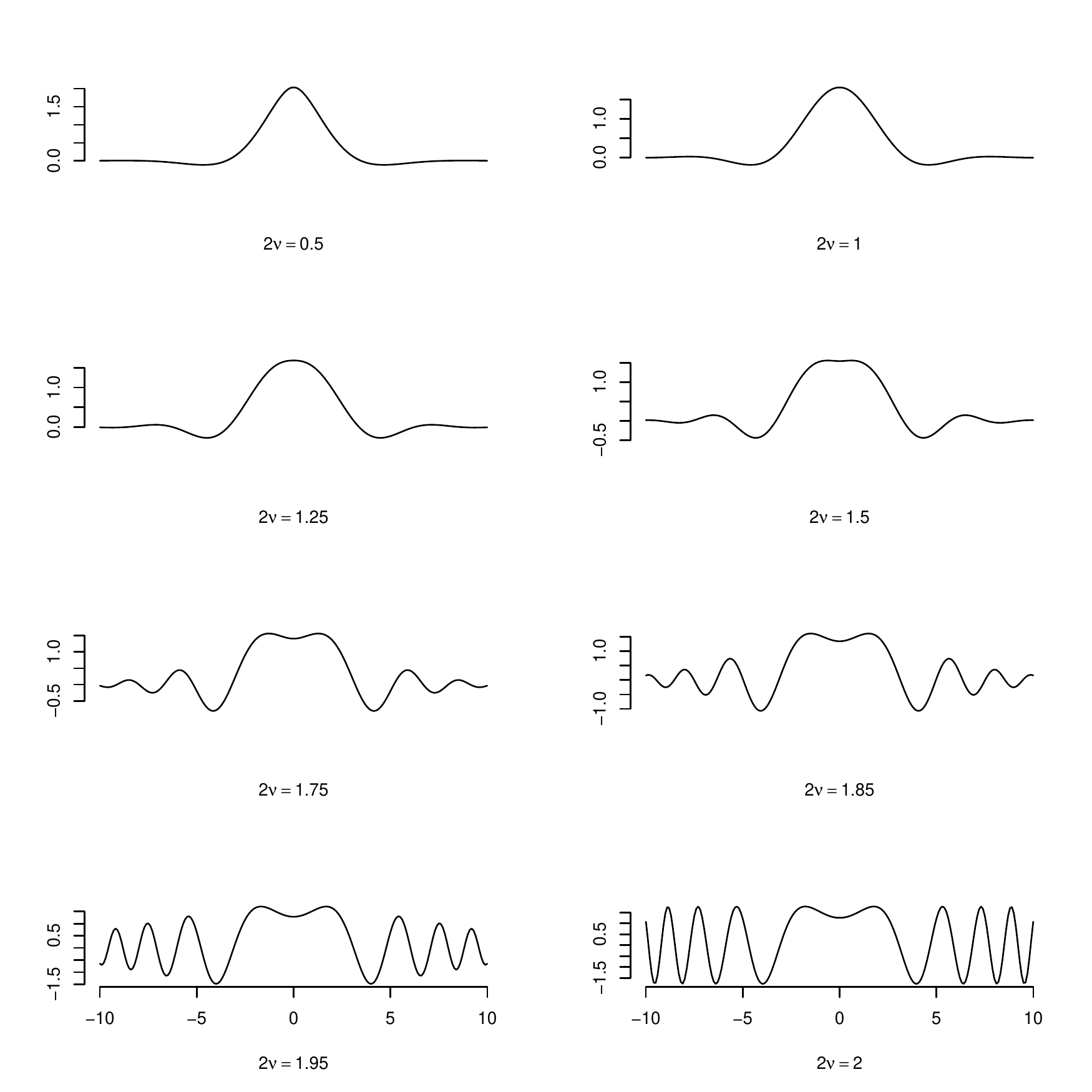}
\caption{The table of fundamental solutions of the Fresnel fractional equation \eqref{pdfgh} with different orders $2\nu$ of fractionality.}
\label{figall}
\end{figure}

\end{document}